\newfont{\cyr}{wncyr10 scaled 1100}
\newcommand*\ZZ{|[draw,circle]| \Z_2}
\numberwithin{equation}{section}
\theoremstyle{plain}
\newtheorem{theorem}{Theorem}[section]
\newtheorem*{theorem*}{Theorem}
\newtheorem{corollary}[theorem]{Corollary}
\newtheorem{lemma}[theorem]{Lemma}
\newtheorem{proposition}[theorem]{Proposition}
\newtheorem{conjecture}[theorem]{Conjecture}
\numberwithin{equation}{section}
\newtheorem{thm}{Theorem} 
\newtheorem{ass}[thm]{Assumption}
\newtheorem{conj}[thm]{Conjecture}
\theoremstyle{definition}
\newtheorem{definition}[theorem]{Definition}
\newtheorem{assumption}[theorem]{Assumption}
\theoremstyle{remark}
\newtheorem{obswr}[theorem]{Observation}
\newtheorem{remarkwr}[theorem]{Remark}
\newtheorem{intro-definition}[theorem]{Definition}
\newenvironment{remark}{\begin{remarkwr}\begin{upshape}}{\end{upshape}\end{remarkwr}}
\newenvironment{myproof}[2] {\paragraph{\emph{Proof of {#1} {#2} }}}{\hfill$\square$}
\def\Gal{\mathrm{Gal}}
\def\GL{\mathrm{GL}}
\def\PGL{\mathrm{PGL}}
\def\det{\mathrm{det}}
\def\Lie{\mathrm{Lie}}
\def\loc{\mathrm{loc}}
\def\ord{\mathrm{ord}}
\def\Spec{\mathrm{Spec}}
\def\im{\mathrm{im}}
\def\coker{\mathrm{coker}}
\def\Ch{\mathrm{CH}}
\def\AJ{\mathrm{AJ}}
\def\sing{\mathrm{sing}}
\def\Frob{\mathrm{Frob}}
\def\new{\mathrm{new}}
\def\ur{\mathrm{ur}}
\def\ac{\mathrm{ac}}
\DeclareMathOperator{\End}{End}
\def\calH{\mathcal{H}}
\def\calM{\mathcal{M}}
\def\calO{\mathcal{O}}
\def\calX{\mathcal{X}}
\def\calY{\mathcal{Y}}
\def\calZ{\mathcal{Z}}
\def\CC{\mathbf{C}}
\def\FF{\mathbf{F}}
\def\PP{\mathbf{P}}
\def\QQ{\mathbf{Q}}
\def\TT{\mathbb{T}}
\def\ZZ{\mathbf{Z}}
\def\rmA{\mathrm{A}}
\def\rmB{\mathrm{B}}
\def\rmH{\mathrm{H}}
\def\rmE{\mathrm{E}}
\def\rmM{\mathrm{M}}
\def\rmN{\mathrm{N}}
\def\rmV{\mathrm{V}}
\def\rmS{\mathrm{S}}
\def\rmT{\mathrm{T}}
\def\frakm{\mathfrak{m}}
\def\triplef{\underline{\mathbf{f}}}
\def\Adel{\mathbf{A}}
\newcommand{\Iw}{\mathrm{Iw}}
\newcommand{\beqcd}[1]{\begin{equation*}\label{#1}\tag{#1}}
\newcommand{\eeqcd}{\end{equation*}}
\begin{document}

\title[Level raising and Diagonal cycles I]
{Arithmetic level raising on triple product of Shimura curves and Gross-Schoen Diagonal cycles I: Ramified case}

\author{Haining Wang }
\address{\parbox{\linewidth} {Haining Wang\\ Department of Mathematics,\\ McGill University,\\ 805 Sherbrooke St W,\\ Montreal, QC H3A 0B9, Canada.~ }}
\email{wanghaining1121@outlook.com}

\begin{abstract}
In this article we study the Gross-Schoen diagonal cycle on a triple product of Shimura curves at a place of bad reduction. We relate the image of the diagonal cycle under the Abel-Jacobi map to certain period integral that governs the central critical value of the Garrett-Rankin type triple product $L$-function via level raising congruences. As an application we prove certain rank $0$ case of the Bloch-Kato conjecture for the symmetric cube motive of a weight $2$ modular form. 
\end{abstract}
    
\subjclass[2000]{Primary 11G18, 11R34, 14G35}
\date{\today}

\maketitle
\tableofcontents

\section{Introduction}
Let $\triplef=(f_{1}, f_{2}, f_{3})$ be a triple of normalized newforms in  $S^{\new}_{2}(\Gamma_{0}(N))^{3}$ of level $N\geq 5$. We assume that $N=N^{+}N^{-}$ is a factorization of $N$ such that $(N^{+}, N^{-})=1$ and $N^{-}$ is square-free with odd number of prime factors. We fix a prime $l\geq 5$ which will be used as the residual characteristic of the coefficient rings throughout this article. For each $i=1, 2, 3$, let $\rmV_{i}=\rmV_{f_{i}, \lambda_{i}}$ be the $\lambda_{i}$-adic Galois representation attached $f_{i}$  given by Eichler-Shimura construction where $\lambda_{i}$ is a place over $l$ in the Hecke field $\QQ(f_{i})$ of $f_{i}$. Then we can attach a \emph{Garret-Rankin type triple product $L$-function} $$L(f_{1}\otimes f_{2}\otimes f_{3}, s)$$ to the triple product Galois representation $\rmV_{1}\otimes \rmV_{2}\otimes \rmV_{3}$. The parity of the order of vanishing of $L(f_{1}\otimes f_{2}\otimes f_{3}, s)$ at the central critical point $s=2$ is controlled by the 
the \emph{global root number} $\epsilon\in\{1, -1\}$ which can be factored into $\epsilon=\prod_{v\mid N\infty}\epsilon_{v}$ of local root numbers $\epsilon_{v}\in\{\pm1\}$. In our setting, $\epsilon_{\infty}=-1$ since the weights $(2, 2, 2)$ of $\triplef$ are \emph{balanced}. In this article, we will assume the following assumption.
\begin{equation*}\tag{$\epsilon=1$}
\text{\emph{The product of local root numbers $\prod_{v\mid N}\epsilon_{v}=-1$ and therefore $\epsilon=1$.}} 
\end{equation*}
One can also attach a more arithmetic object to the triple tensor product Galois representation $\rmV_{1}\otimes \rmV_{2}\otimes \rmV_{3}$ called the Bloch-Kato Selmer group. We remind its definition below.  Denote by $\mathrm{B_{cris}}$ the crystalline period ring with respect to $\QQ_{l}$. The \emph{triple product Bloch-Kato Selmer group} $$\rmH^{1}_{f}(\QQ, \rmV_{1}\otimes \rmV_{2}\otimes \rmV_{3}(-1))$$ is the subset of classes $s\in \rmH^{1}(\QQ, \rmV_{1}\otimes \rmV_{2}\otimes \rmV_{3}(-1))$ such that 
${\rm{loc}}_{l}(s)\in \rmH^{1}_{f}(\QQ_{l}, \rmV_{1}\otimes \rmV_{2}\otimes \rmV_{3}(-1))$
where $\mathrm{loc}_{l}$ is the localization map to the local Galois cohomology group at $l$ and 
$\rmH^{1}_{f}(\QQ_{l}, \rmV_{1}\otimes \rmV_{2}\otimes \rmV_{3}(-1))$
is given by the Bloch-Kato local condition
\begin{equation*}
\ker[ \rmH^{1}(\QQ_{l}, \rmV_{1}\otimes \rmV_{2}\otimes \rmV_{3}(-1))\rightarrow \rmH^{1}_{f}(\QQ_{l}, (\rmV_{1}\otimes \rmV_{2}\otimes \rmV_{3})\otimes\rmB_{\mathrm{cris}}(-1))].
\end{equation*}
The \emph{Bloch-Kato conjecture} predicts a relationship between the order of the vanishing of the triple product $L$-function $L(f_{1}\otimes f_{2}\otimes f_{3}, s)$ at $s=2$ and the rank of $\rmH^{1}_{f}(\QQ, \rmV_{1}\otimes \rmV_{2}\otimes \rmV_{3}(-1))$.  The present article will be concerned with the \emph{rank 0 case} of the Bloch-Kato conjecture for $\rmV_{1}\otimes \rmV_{2}\otimes \rmV_{3}$. More precisely, we are concerned with the following conjecture.
\begin{conj}
Suppose that the central critical value $L(f_{1}\otimes f_{2}\otimes f_{3}, 2)$ is non-zero, then we have
\begin{equation*}
\rmH^{1}_{f}(\QQ, \rmV_{1}\otimes \rmV_{2}\otimes \rmV_{3}(-1))=0. 
\end{equation*}
\end{conj}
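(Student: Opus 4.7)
The plan is to deduce the conjecture from the non-vanishing of $L(f_1\otimes f_2\otimes f_3,2)$ by a bipartite Euler system argument in the style of Bertolini--Darmon, using the Gross--Schoen diagonal cycle on the triple product of the indefinite Shimura curve of discriminant $N^-$ and level $N^+$ in place of the classical Heegner system. Fix Galois-stable lattices $\rmT_i\subset \rmV_i$ and put $\rmT:=\rmT_1\otimes\rmT_2\otimes\rmT_3$; by a standard divisibility argument it suffices to show, for every $n\geq 1$, that the residual Selmer group $\rmH^{1}_{f}(\QQ,\rmT/\lambda^n(-1))$ vanishes. Given any nonzero class $s$ in this group, the strategy is to construct an annihilator $\kappa\in \rmH^{1}(\QQ,\rmT/\lambda^n(-1))$ whose local components lie in the Bloch--Kato subspace at every prime except a single auxiliary prime $p$, and at which $\loc_p(\kappa)$ pairs nontrivially with $\loc_p(s)$ under local Tate duality; global reciprocity $\sum_v\langle\loc_v(\kappa),\loc_v(s)\rangle_v=0$ then forces $s=0$.

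The supply of annihilators comes from \emph{$n$-admissible primes} $p\nmid Nl$, i.e.\ primes at which $p\not\equiv\pm 1\pmod{\lambda^n}$ and the Frobenius eigenvalues on the residual triple representation satisfy a precise matching condition ensuring that $\rmT/\lambda^n(-1)$ restricted to the decomposition group at $p$ splits as a direct sum of its finite and singular parts. For such $p$, the arithmetic level-raising theorem---the main input proved in the body of this article in the ramified setting---identifies the $p$-new quotient of the mod-$\lambda^n$ \'etale cohomology of the triple product Shimura curve $X^{3}$ with a Hecke-equivariant quotient of a space of functions on the triple product of the definite Shimura set $Y^{\mathrm{def}}$ attached to the quaternion algebra of discriminant $pN^-$, realized geometrically via the supersingular locus and the weight spectral sequence. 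The Gross--Schoen diagonal cycle $\Delta$ on $X^{3}$ is cohomologically trivial, so one may set $\kappa_p:=\AJ(\Delta)\in \rmH^{1}(\QQ,\rmT/\lambda^n(-1))$; its localization is unramified away from $p$ and crystalline at $l$, while the singular component $\partial_p(\kappa_p)$ is computed, via the level-raising isomorphism, as the evaluation at the triple diagonal of the tensor product of mod-$\lambda^n$ quaternionic forms on $Y^{\mathrm{def}}$.

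The decisive step is the explicit reciprocity law identifying $\partial_p(\kappa_p)$ with the definite period integral appearing in Ichino's formula: after introducing the extra ramification at $p$ so that the sign condition $\epsilon=1$ transfers correctly, the Jacquet--Langlands lift of $\triplef$ lives on the definite quaternion algebra of discriminant $pN^-$, and Ichino's formula equates the square of this period with $L(f_1\otimes f_2\otimes f_3,2)$ up to explicit local factors. The non-vanishing hypothesis then yields $\partial_p(\kappa_p)\not\equiv 0\pmod{\lambda^n}$ for infinitely many $n$-admissible $p$ and a suitable choice of test vector. A Chebotarev density argument applied to the compositum of the field cut out by $\rmT/\lambda^n$ and the Kummer-type extension associated to $s$ produces an admissible $p$ at which $\loc_p(s)$ has nontrivial finite part; global reciprocity then forces $s=0$, and iterating over $n$ yields the conjecture.

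The principal obstacle is the explicit reciprocity law itself. Establishing it in the ramified case requires a delicate analysis of the integral semistable model of $X$ over $\Z_p$, its resolution of singularities, and the weight--monodromy filtration it induces on the cohomology of the triple product, together with a K\"unneth-type computation threading the Gross--Schoen modification---essential for the cohomological triviality of $\Delta$---through the geometric identification with the supersingular points of $Y^{\mathrm{def}}$. The ramified structure of the Shimura curve at primes dividing $N^-$ introduces additional strata in the integral model of $X^{3}$ that must be tracked carefully, and working out this geometric reciprocity law constitutes the technical heart of the present article.
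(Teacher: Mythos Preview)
The statement you are attempting to prove is a \emph{conjecture} in the paper, not a theorem, and the Bertolini--Darmon argument you sketch does not close in the general triple product setting. The obstruction is the rank of the local singular quotient. At an $n$-admissible prime $p$ the paper establishes (Theorem~\ref{arithmetic-level-raising} and Lemma~\ref{3-fin}) that
\[
\rmH^{1}_{\sing}(\QQ_{p}, \rmM^{[p]}_{n}(\triplef)(-1)) \;\cong\; \bigoplus_{j=1}^{3}\Bigl(\bigotimes_{i=1}^{3}S^{B}_{2}(N^{+}, \calO_{i})/I_{i,n}\Bigr)
\]
is free of rank \emph{three} over $\calO_{n}$, and dually $\rmH^{1}_{\mathrm{fin}}(\QQ_{p}, \rmM^{[p]}_{n}(\triplef)(-1))$ is also rank three. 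Hence in your global reciprocity step, even when $\loc_{p}(s)\neq 0$ and $\partial_{p}\Theta^{[p]}_{n}\neq 0$, their local Tate pairing can vanish: a single cohomology class cannot kill a three-dimensional Selmer direction. The paper says this explicitly in the remark following Lemma~\ref{p-lower}: the class $\Theta^{[p]}_{n}$ ``can not fill up the whole singular quotient and therefore is not enough to bound the Selmer group.'' What would be needed---and what the paper only conjectures---is a triple of global classes, each landing in a distinct rank-one summand of the singular quotient and each satisfying a reciprocity law like Theorem~\ref{recip}.

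What the paper \emph{does} prove is the degenerate case $f_{1}=f_{2}=f_{3}=f$ projected to the symmetric cube summand $\mathrm{Sym}^{3}\rmV_{f}(-1)$, and only under the extra hypothesis $L(f,1)\neq 0$ (Theorem~\ref{main-symm}); there the relevant singular quotient collapses to rank one (Lemma~\ref{sel-pairing}(4)) and your argument goes through verbatim. Two further inaccuracies in your proposal: you have the quaternion algebras reversed---since $N^{-}$ has an odd number of prime factors, the algebra of discriminant $N^{-}$ is the \emph{definite} one $B$ carrying the Shimura set $X^{B}$ and the period integral, while the \emph{indefinite} algebra $B'$ of discriminant $pN^{-}$ carries the Shimura curve $X$ and the diagonal cycle; and the level-raising isomorphism in the ramified case is driven by the Cerednik--Drinfeld uniformization of $X$ at $p$, not by a supersingular-locus description at a prime of good reduction.
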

One of the most successful approaches to prove such conjectures is via the \emph{Euler-Kolyvagin system} arguments. In practice, one need to impose various assumptions on the triple $\triplef=(f_{1}, f_{2}, f_{3})$ to apply the Euler system argument. We will formulate a more precise conjecture with a few additional assumptions in  the last section of this article, see Conjecture \ref{main-conj}. This article makes the first step towards the construction of such an Euler-Kolyvagin system for the triple product representation. More precisely, the main result of this article concerns an explicit reciprocity formula \'{a} la Bertonilli-Darmon for the representation $\rmV_{1}\otimes \rmV_{2}\otimes \rmV_{3}$. Such formula, loosely speaking, relates cycles classes on Shimura varieties to period integrals that govern the central critical values of automorphic $L$-functions via level raising congruences among automorphic forms. We refer the readers to the recent article \cite{LTXZZ} for the most general results along this line. In this article, we relate the \emph{Gross-Schoen diagonal cycle class} on the triple product of Shimura curves at a place of bad reduction to a period integral that represents the algebraic part of the triple product $L$-function. Note that our results are not subsumed by \cite{LTXZZ}. In the companion article \cite{Wang}, we will study the Gross-Schoen diagonal cycle class on the triple product of Shimura curves at a place of good reduction and so-called second reciprocity law is proved there. In the case when $\triplef=(f, f, f)$ for a single modular form $f$ of weight two, our constructions here and in \cite{Wang} produce the desired Euler-Kolyvagin system for the symmetric cube representation of $f$ and we can give some evidences to the Bloch-Kato conjecture for the symmetric cube representation of $f$ in the rank $0$ and rank $1$ case. 

\subsection{Main results}
In order to state our results precisely, we introduce more notations. For $i=1, 2, 3$, let $\QQ(f_{i})$ be the Hecke field of $f_{i}$. For simplicity, we assume that $E=\QQ(f_{1})=\QQ({f_{2}})=\QQ({f_{3}})$ in this introduction. Let $\lambda$ be a place of $E$ above $l$ and $E_{\lambda}$ be the completion of $E$ at $\lambda$.  Let $\calO=\calO_{E_{\lambda}}$ be the ring of integers of $E_{\lambda}$. We denote by $\varpi$ a uniformizer of $\calO$ and set $\calO_{n}=\calO/\varpi^{n}$ for any $n\geq 1$. Let $\phi_{i}: \TT\rightarrow \calO$ be the natural morphism form the $l$-adic Hecke algebra to $\calO$ corresponding to the Hecke eigensystem of $f_{i}$ and let $\phi_{i, n}: \TT\rightarrow \calO_{n}$ be the reduction of $\phi_{i}$ modulo $\varpi^{n}$. Here the $l$-adic Hecke algebra $\TT$ is the $l$-adic completion of the Hecke algebra that acts faithfully on the subspace of $S_{2}(\Gamma_{0}(N))$ that is new at primes dividing $N^{-}$. In particular, $\phi_{i}$ sends the Hecke operator $T_{p}$ to the $p$-th Fourier coefficient $a_{p}(f)$ of $f$ for $p\nmid N$. We denote by $I_{i, n}$ the kernel of $\phi_{i, n}$ and by $\frakm_{i}$ the maximal ideal in $\TT$ containing $I_{i, n}$. Let $\frakm_{\triplef}=(\frakm_{1}, \frakm_{2}, \frakm_{3})$. We will always assume that the maximal ideals $\frakm_{i}$ are \emph{residually irreducible} in the sense explained below. Let 
\begin{equation*}
\rho_{i}: G_{\QQ}\rightarrow \GL(\rmV_{i})
\end{equation*} 
be the Galois representation attached to $f_{i}$. Then we denote by $\bar{\rho}_{i}$ the residual Galois representation of $\rho_{i}$. We say $\frakm_{i}$ is residually irreducible if $\bar{\rho}_{i}$ is absolutely irreducible.  
We introduce the notion of \emph{n-admissible primes} for $\triplef=(f_{1}, f_{2}, f_{3})$ in Definition \ref{n-adm}. A prime $p$ is $n$-admissible for the triple $\triplef$ if 
\begin{enumerate}
\item $p\nmid Nl$;
\item $l\nmid p^{2}-1$;
\item $\varpi^{n}\mid p+1-\epsilon_{p, i}a_{p}(f_{i})$ with $i=1, 2, 3$ and $\epsilon_{i}\in \{\pm1\}$;
\item $\epsilon_{p, 1}\epsilon_{p, 2}\epsilon_{p, 3}=1$.
\end{enumerate}
This is an extension of the notion of $n$-admissible prime in \cite{BD-Main} to the triple product setting which can be loosely interpreted as those primes $p$ for which one can find a triple $\triplef^{[p]}=(f^{[p]}_{1}, f^{[p]}_{2}, f^{[p]}_{3})$ of weight $2$ newforms of level $pN$ that are congruent to $\triplef=(f_{1}, f_{2}, f_{3})$ modulo $\varpi^{n}$. We highlight the additional condition $(4)$. This condition is imposed to make a sign change for the triple product $L$-function attached to $\triplef^{[p]}=(f^{[p]}_{1}, f^{[p]}_{2}, f^{[p]}_{3})$ at $p$. Indeed the local sign at $p$ is given by $-\epsilon_{p,1}\epsilon_{p, 2}\epsilon_{p, 3}$ by \cite[1.3]{GK92} which is $-1$ by our assumption. This sign change makes it reasonable to consider a cycle class attached to the triple $\triplef^{[p]}$ in light of the Bloch-Kato conjecture of odd rank. For the triple $\triplef^{[p]}$ and $i=1, 2, 3$, we have morphisms $\phi^{[p]}_{i}: \TT^{[p]}\rightarrow \calO$ and $\phi^{[p]}_{i,n}: \TT^{[p]}\rightarrow \calO_{n}$ defined similarly as before with $\TT^{[p]}$ the $l$-adic Hecke algebra corresponding to the subspace of $S_{2}(\Gamma_{0}(pN))$ that is new at primes dividing $pN^{-}$ . Let $I^{[p]}_{i, n}$ be the kernel of  $\phi^{[p]}_{i,n}$ and let $\frakm^{[p]}_{i}$ be the maximal ideal containing $I^{[p]}_{i, n}$ in $\TT^{[p]}$. Let $\frakm^{[p]}_{\triplef}=(\frakm^{[p]}_{1}, \frakm^{[p]}_{2}, \frakm^{[p]}_{3})$. We fix a such $n$-admissible prime $p$ for $\triplef$. For the cycle class, it is natural to consider the diagonal cycle on the triple product of Shimura curves. The diagonal cycles on the triple product of curves are generally referred to as the \emph{Gross-Schoen diagonal cycles}. These cycles are introduced and studied in \cite{GS95}. Their connections to tripe product $L$-functions are given in \cite{GK92}, \cite{YZZ-dia}. To define the Shimura curves in our case, we need to introduce the following quaternion algebras. Let $B$ be the definite quaternion algebra over $\QQ$ of discriminant $N^{-}$ and $B^{\prime}$ be the indefinite quaternion algebra over $\QQ$ of discriminant $pN^{-}$. Then one can associate a Shimura set $X^{B}=X^{B}_{N^{+}, N^{-}}$ to $B$ and a Shimura curve $X=X^{B^{\prime}}_{N^{+}, pN^{-}}$ over $\QQ$ to $B^{\prime}$. We refer the reader to \S 2.1 for the constructions. In particular we have an integral model of $\mathfrak{X}$ of $X$ over $\ZZ_{(p)}$. Note that since $p$ is ramified in $B^{\prime}$, the completion of $\mathfrak{X}$ at its special fiber admits \emph{Cerednick-Drinfeld uniformization}. We consider the diagonal morphism 
\begin{equation*}
\theta: \mathfrak{X}\rightarrow \mathfrak{X}^{3}
\end{equation*} of $\mathfrak{X}$ into the triple fiber product $\mathfrak{X}^{3}$. We obtain thus a cycle class $\theta_{*}[\mathfrak{X}\otimes\QQ]\in \mathrm{CH}^{2}(\mathfrak{X}^{3}\otimes\QQ)$ in the Chow group of $\mathfrak{X}^{3}\otimes\QQ$. Since the triple $\frakm^{[p]}_{\triplef}$ is residually irreducible, the  K\"{u}nneth formula implies that
\begin{equation*}
\rmH^{3}(\mathfrak{X}^{3}\otimes{{\QQ}^{\ac}}, \calO(2))_{\frakm^{[p]}_{\triplef}}=(\otimes^{3}_{i=1}\rmH^{1}(X_{\QQ^{\ac}}, \calO(1))_{\frakm^{[p]}_{i}})(-1)
\end{equation*}
and $\rmH^{*}(\mathfrak{X}^{3}\otimes{{\QQ}^{\ac}}, \calO(2))_{\frakm_{\triplef}^{[p]}}$ vanishes identically for $*\neq 3$. Thus the cycle class map and the Hochschild-Serre spectral sequence induces the following {Abel-Jacobi map}
\begin{equation*}
\mathrm{AJ}_{\triplef}: \mathrm{CH}^{2}(\mathfrak{X}^{3}\otimes \QQ)\rightarrow \rmH^{1}(\QQ, \rmH^{3}(\mathfrak{X}^{3}\otimes{{\QQ}^{\ac}}, \calO(2))_{\frakm^{[p]}_{\triplef}}).
\end{equation*}
We denote by $\rmM^{[p]}_{n}(\triplef)$ the Galois module over $\calO_{n}$ given by 
\begin{equation*}
\otimes^{3}_{i=1}\rmH^{1}(X_{{\QQ}^{\ac}}, \calO(1)){/I^{[p]}_{i, n}}
\end{equation*}
The Abel-Jacobi map composed with the canonical map 
\begin{equation*}
\rmH^{3}(X^{3}\otimes{{\QQ}^{\ac}}, \calO(2))_{\frakm_{\triplef}^{[p]}}\rightarrow \rmM^{[p]}_{n}(\triplef)(-1)
\end{equation*}
gives rise to the following mod $\varpi^{n}$-version of the Abel-Jacobi map
\begin{equation*}
\mathrm{AJ_{\triplef, n}}: \Ch^{2}(\mathfrak{X}^{3}\otimes \QQ)\rightarrow \rmH^{1}(\QQ, \rmM^{[p]}_{n}(\triplef)(-1)).
\end{equation*}
We thus obtain a global cohomology class $\Theta^{[p]}_{n}\in \rmH^{1}(\QQ, \rmM^{[p]}_{n}(\triplef)(-1))$ by applying the map $\mathrm{AJ_{\triplef, n}}$ to the cycle $\theta_{*}[\mathfrak{X}\otimes \QQ]\in \mathrm{CH}^{2}(\mathfrak{X}^{3}\otimes \QQ)$.

On the other hand, let $\triplef^{B}=(f^{B}_{1}, f^{B}_{2}, f^{B}_{3})\in S^{B}_{2}(N^{+}, \calO)^{\oplus 3}$ be the \emph{Jacquet-Langlands transfer} of $\triplef=(f_{1}, f_{2}, f_{3})$ in the space of \emph{quaternionic modular forms}  $S^{B}_{2}(N^{+}, \calO)^{3}$ as in \cite[Definition 1.1]{BD-Main}. We consider the following period integral
\begin{equation*}
I(f^{B}_{1}, f^{B}_{2}, f^{B}_{3})=\sum_{z\in X^{B}} f^{B}_{1}(z)f^{B}_{2}(z)f^{B}_{3}(z).
\end{equation*} 
By the main result of  \cite{KH91} which resolves a conjecture of Jacquet, this  period integral is non-vanishing if $L(f_{1}\otimes f_{2}\otimes f_{3}, 2)$ is non-vanishing. Our first goal of this article is to provide an explicit relation between the cohomology class $\Theta^{[p]}_{n}$ and the period integral $I(f^{B}_{1}, f^{B}_{2}, f^{B}_{3})$. To compare these two objects, we pass the class $\Theta^{[p]}_{n}$ living in the global Galois cohomology group $\rmH^{1}(\QQ, \rmM^{[p]}_{n}(\triplef)(-1))$ to its \emph{singular quotient}  $\rmH^{1}_{\sing}(\QQ_{p}, \rmM^{[p]}_{n}(\triplef)(-1))$ at $p$
whose definition is recalled in \eqref{fin-sing}. The singular quotient $\rmH^{1}_{\sing}(\QQ_{p}, \rmM^{[p]}_{n}(\triplef)(-1))$ itself has a geometric description and this description is referred to as the \emph{ramified arithmetic level raising} in the recent literature \cite{LT}, \cite{LTXZZ}. Here the word ramified refers to the fact that the relevant Shimura variety has bad reduction at the prime $p$. Our first main result is such an arithmetic level raising theorem for the triple product of Shimura curves. 
\begin{thm}[Ramified arithemtic level raising ]\label{level-raise-intro}
Let  $p$ be an $n$-admissible prime for the triple $\triplef=(f_{1}, f_{2}, f_{3})$.  For each $i=1, 2, 3$, assume that 
\begin{enumerate}
\item the maximal ideal $\frakm_{i}$ is residually irreducible;
\item each $S^{B}_{2}(N^{+}, \calO)_{\frakm_{i}}$ is a free rank $1$ module over $\TT_{\frakm_{i}}$.
\end{enumerate}
Then the singular quotient $\rmH^{1}_{\sing}(\QQ_{p}, \rmM^{[p]}_{n}(\triplef)(-1))$ is free of rank $3$ over $\calO_{n}$ and we have an isomorphism 
\begin{equation}\label{intro-equ}
 \rmH^{1}_{\sing}(\QQ_{p}, \rmM^{[p]}_{n}(\triplef)(-1))\cong \otimes^{3}_{i=1}S^{B}_{2}(N^{+}, \calO)/I_{i,n}.
\end{equation}
\end{thm}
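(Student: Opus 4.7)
The plan is to reduce the statement to an explicit local computation at $p$, exploiting the Cerednick-Drinfeld uniformization of $\mathfrak{X}$ to describe the local Galois structure of the relevant cohomology groups. Since $M := \rmM^{[p]}_{n}(\triplef)(-1)$ is a triple tensor product of the cohomology of the single Shimura curve $X$, first I would analyze the local $G_{\QQ_p}$-structure of each factor $V_i := \rmH^{1}(X_{\QQ^{\ac}}, \calO(1))/I^{[p]}_{i,n}$, then assemble via the tensor-product monodromy.

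For each $i$, by the Cerednick-Drinfeld uniformization, $V_i$ should be a Steinberg-type $G_{\QQ_p}$-module, fitting into a two-step monodromy filtration $0 \to V_i^{I_p} \to V_i \to V_i/V_i^{I_p} \to 0$ with inertia acting through a nilpotent monodromy operator $N_i$. Both graded pieces will be free of rank one over $\calO_n$ via the freeness hypothesis (2), and identified, up to a Tate twist and Frobenius scaling by $\epsilon_{p,i}$, with $S^{B}_{2}(N^{+}, \calO)/I_{i,n}$: this comes from the Jacquet-Langlands correspondence combined with the identification of the dual graph of the special fiber of $\mathfrak{X}$ with the Shimura set $X^{B}$. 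Condition (3) of $n$-admissibility pins down the Frobenius eigenvalues on these pieces in terms of $\epsilon_{p,i}$ and $p$.

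For the triple product, the monodromy filtrations on the $V_i$ combine to give a filtration on $M$ with monodromy operator $N = N_1 \otimes 1 \otimes 1 + 1 \otimes N_2 \otimes 1 + 1 \otimes 1 \otimes N_3$. I would then compute $\rmH^{1}_{\sing}(\QQ_p, M)$ via the inflation-restriction sequence together with the tame-cohomology identification $\rmH^{1}(I_p, M) \cong (M/NM)(-1)$. Condition (4) $\epsilon_{p,1}\epsilon_{p,2}\epsilon_{p,3} = 1$ should align the Frobenius eigenvalues on the surviving graded pieces of $M/NM$ so that, after the Tate twist, exactly three independent $\phi$-invariant classes appear, while condition (2) $l \nmid p^2 - 1$ forces the unwanted graded pieces (on which Frobenius acts by a nontrivial power of $p$) to contribute nothing. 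Matching this three-dimensional surviving subspace against the triple tensor product via the single-factor Jacquet-Langlands identifications then yields the claimed isomorphism.

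The hard part will be the careful bookkeeping of Tate twists and Frobenius eigenvalues through the tensor-product monodromy filtration, and in particular verifying that the four $n$-admissibility conditions combine to produce exactly the predicted rank-three Frobenius-invariant subspace of $(M/NM)(-1)$ rather than something larger or smaller. A secondary obstacle is upgrading the graded-piece computations to a statement over the Artinian ring $\calO_n$: this requires the monodromy filtration on $M$ to be strict enough that $M/NM$ is free of the expected rank, so that the final isomorphism with $\otimes^{3}_{i=1} S^{B}_{2}(N^{+}, \calO)/I_{i,n}$ is an honest isomorphism of $\calO_n$-modules rather than only of associated gradeds.
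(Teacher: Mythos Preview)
Your approach is essentially correct and proves the statement, but it differs substantially from the paper's and carries a small misconception worth flagging.

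First the misconception: once you pass to the quotient $V_i = \rmH^{1}(X_{\QQ^{\ac}}, \calO(1))/I^{[p]}_{i,n}$, the module is actually \emph{unramified} at $p$. Indeed $\rmT^{[p]}_{i,n}\cong \rmT_{i,n}$ as $G_{\QQ}$-modules, and $\rmT_{i,n}$ comes from the form $f_i$ of level $N$ prime to $p$. So your nilpotent $N_i$ is zero on $V_i$, the tensor monodromy $N$ is zero on $M$, and $M/NM=M$. This only simplifies your computation: $\rmH^{1}_{\sing}(\QQ_p,M)\cong M(-1)^{\Frob_p}$, and conditions (2)--(4) in the definition of $n$-admissibility pick out exactly the three summands with $a_1+a_2+a_3=2$ in the decomposition $V_1\otimes V_2\otimes V_3\cong \bigoplus_{(a_1,a_2,a_3)\in\{0,1\}^3}(\text{rank }1)$. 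The paper itself proves the rank-$3$ statement precisely this way (Lemma 4.2). Identifying each rank-$1$ graded piece of $V_i$ with $S^{B}_{2}(N^{+},\calO)/I_{i,n}$ via the single-curve theory then assembles to the claimed isomorphism.

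The paper, however, takes a genuinely different route for the isomorphism itself. Rather than invoking K\"unneth and the single-curve structure, it constructs a strictly semistable model $\calY$ of $\mathfrak{X}^{3}$ over $\ZZ_{p^2}$ by explicit blow-ups, runs the weight spectral sequence for the threefold, and computes the singular quotient via the potential map $\nabla: A_{2}(Y_k)^{0}\to A^{2}(Y_k)^{0}$ of \cite{Liu-cubic}. The ``featuring cycles'' $Y^{i(i+1)(i+2)(i+3)(i+5)}$ and $Y^{012345}$ are isolated, and the intersection matrix among them is computed strata-by-strata, eventually yielding the three copies of $\Phi_{\calO}/I^{[p]}_{i,n}$. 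Descent from $\QQ_{p^2}$ to $\QQ_p$ uses condition (4).

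What each approach buys: your local, representation-theoretic argument is shorter and suffices for the bare statement. The paper's geometric approach is longer but produces a \emph{specific} isomorphism realized on the special fiber of $\calY$, which is exactly what is needed to compute the image of the diagonal cycle $\theta_{*}[\mathfrak X\otimes\QQ]$ under the Abel--Jacobi map (Proposition 2.5). Without that geometric control one cannot prove the first reciprocity law (Theorem 4.7), which is the real target; your K\"unneth isomorphism, while canonical in its own right, is not visibly the one through which $\partial_p\Theta^{[p]}_n$ factors.
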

This theorem is proved in Theorem \ref{arithmetic-level-raising} and Corollary \ref{main-coro}. We remark that the hypothesis $(2)$ on the forms $f_{i}$ is called \emph{$l$-isolated} in \cite{BD-Main}.  One can achieve hypothesis $(2)$ by imposing conditions on the residual Galois representation $\bar{\rho}_{i}$ of $\rho_{i}$ and then applying the Taylor-Wiles argument refined by Diamond in \cite{Diamond-TW}. For example, one can impose the following conditions in \cite[Hypothesis $\mathrm{CR}^{+}$]{CH-1}.
\begin{ass}[$\mathrm{CR}^{+}$]  For $i=1, 2, 3$, the residual Galois representation $\bar{\rho}_{i}$ satisfies the following assumptions
\begin{enumerate}
\item $\bar{\rho}_{i}$ is absolutely irreducible when restricted to $G_{\QQ(\sqrt{p^{*}})}$ where $p^{*}=(-1)^{\frac{p-1}{2}}p$;
\item If $q\mid N^{-}$ and $q\equiv \pm1\mod l$, then $\bar{\rho}_{i}$ is ramified;
\item If $q\mid \mid N^{+}$ and $q\equiv 1\mod l$, then $\bar{\rho}_{i}$ is ramified;
\item The Artin conductor $N_{\bar{\rho}_{i}}$ is prime to $N/N_{\bar{\rho}_{i}}$.
\end{enumerate}
\end{ass}
We refer the reader to \cite[Proposition 6.8]{CH-1} for an exposition of the Taylor-Wiles argument mentioned above to achieve hypothesis $(2)$ in Theorem \ref{level-raise-intro}. The proof of Theorem \ref{level-raise-intro} is geometric in nature by analyzing the semistable model of  $X^{3}$ at an $n$-admissible prime $p$. To calculate the singular quotient, we use the machineries developed in \cite{Liu-cubic}, in particular the so-called \emph{potential map} which we give a down-to-earth introduction in the first part of the article. The right-hand side of the isomorphism \eqref{intro-equ} can be made even more geometrically in terms of the $\emph{component groups}$ of the Shimura curves at $p$. Let $\mathcal{J}$ be the N\'{e}ron model of the Jacobian $\mathrm{Jac}(X_{\QQ_{p^{2}}})$ of $X_{\QQ_{p^{2}}}$ and let $\Phi$ be the group of connected components of the special fiber of $\mathcal{J}$, then the isomorphism in \eqref{intro-equ} comes more canonically from the following isomorphism
\begin{equation}
 \rmH^{1}_{\sing}(\QQ_{p}, \rmM^{[p]}_{n}(\triplef)(-1))\cong \oplus^{3}_{j=1}(\otimes^{3}_{i=1}\Phi_{\calO} /I^{[p]}_{i,n}). 
\end{equation}
Comparing to the pioneering works on arithmetic level raising \cite{BD-Main}, \cite{Liu-cubic}, \cite{LTXZZ}, one of the main difference in our setting is that the  singular quotient is of rank $3$ in our settings as opposed to being of rank $1$ in all the previously mentioned works. This interesting phenomenon suggests some richer structures hidden in our setting which we do not completely understand yet. Nevertheless, the above theorem still provides the natural setting for relating the cohomology class  $\Theta^{[p]}_{n}$ to the period integral $I(f^{B}_{1}, f^{B}_{2}, f^{B}_{3})=\sum_{z\in X^{B}} f^{B}_{1}(z)f^{B}_{2}(z)f^{B}_{3}(z).$ In order to state this relation, we introduce a natural pairing 
\begin{equation*}
\begin{aligned}
(\hphantom{a},\hphantom{a}):\otimes^{3}_{i=1}S^{B}_{2}(N^{+}, \calO)[I_{i, n}]\times \otimes^{3}_{i=1}S^{B}_{2}(N^{+}, \calO)/(I_{i, n})\rightarrow \calO_{n}\\
\end{aligned}
\end{equation*}
in \eqref{pairing}. 
Let $\partial_{p}\Theta^{[p]}_{n}$ be the image of $\Theta^{[p]}_{n}$ in $\rmH^{1}_{\sing}(\QQ_{p}, \rmM^{[p]}_{n}(\triplef)(-1))$. For $j=1, 2, 3$, we denote by 
\begin{equation*}
\partial^{(j)}_{p}\Theta^{[p]}_{n}\in \otimes^{3}_{i=1}S^{B}_{2}(N^{+}, \calO){/I_{i,n}}
\end{equation*}
its projection to the $j$-th copy of 
\begin{equation*}
\oplus^{3}_{j=1}(\otimes^{3}_{i=1}S^{B}_{2}(N^{+}, \calO){/I_{i,n}}).
\end{equation*} 
Then our second main result is the following explicit reciprocity formula which we will call the \emph{first reciprocity law}.
\begin{thm}\label{recip-intro}
Let $p$ be an $n$-admissible prime for the triple $\triplef$. We assume the assumptions in Theorem \ref{level-raise-intro} are satisfied. Let $\phi_{1}\otimes \phi_{2}\otimes \phi_{3}\in \otimes^{3}_{i=1}S^{B}_{2}(N^{+}, \calO)[I_{i,n}]$. Then we have the following formula,
\begin{equation}\label{reci-formula}
(\partial^{(j)}_{p}\Theta^{[p]}_{n}, \phi_{1}\otimes \phi_{2}\otimes \phi_{3})=(p+1)^{3}\sum_{z\in X^{B}}\phi_{1}(z)\phi_{2}(z)\phi_{3}(z)
\end{equation} 
for $j=1, 2, 3$. 
\end{thm}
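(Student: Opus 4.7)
The plan is to explicitly compute the image of $\Theta^{[p]}_n$ under $\partial_p$ in the singular quotient, pair it with $\phi_1 \otimes \phi_2 \otimes \phi_3$, and match the result with the period integral on the right of \eqref{reci-formula}. The bridge between the geometric side (the Abel-Jacobi image of the diagonal cycle) and the automorphic side (the quaternionic period over $X^B$) is the Cerednick-Drinfeld uniformization of $\mathfrak{X}$ at $p$: its supersingular locus is indexed by the finite Shimura set $X^B$, so both sides of \eqref{reci-formula} can be rewritten as sums indexed by $X^B$ and then compared.

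First, I would translate $\partial^{(j)}_p \Theta^{[p]}_n$ into an intersection-theoretic quantity on the semistable model of $\mathfrak{X}^3$ at $p$. Via the weight spectral sequence and Theorem \ref{level-raise-intro}, the target $\rmH^1_{\sing}(\QQ_p, \rmM^{[p]}_n(\triplef)(-1))$ decomposes as $\oplus^3_{j=1}(\otimes^3_{i=1} \Phi_{\calO} / I^{[p]}_{i,n})$, where $\Phi$ is the component group of the N\'{e}ron model of $\mathrm{Jac}(X_{\QQ_{p^2}})$. The class $\partial_p \Theta^{[p]}_n$ is extracted from $\theta_*[\mathfrak{X} \otimes \QQ]$ by the potential map of \cite{Liu-cubic}: one writes the cycle class of the Zariski closure of the diagonal in the integral model and projects to the graded pieces of the monodromy filtration on $\rmH^3$ associated to the strata of the special fiber. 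The three summands indexed by $j=1,2,3$ correspond to the three canonical K\"{u}nneth components that survive in the relevant graded piece, and the $S_3$-symmetry of the diagonal cycle ensures that the three projections yield the same answer.

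Next, I would use the fact that the special fiber of $\mathfrak{X}$ over $\mathbb{F}_{p^2}$ is a union of $\mathbb{P}^1$'s indexed by $X^B$, with intersection pattern encoded by the Bruhat-Tits tree of $\mathrm{PGL}_2(\QQ_p)$. The closure of $\theta_*[\mathfrak{X} \otimes \QQ]$ meets the special fiber of $\mathfrak{X}^3$ along the "diagonal strata", and a direct computation via the potential map presents $\partial^{(j)}_p \Theta^{[p]}_n$ as a sum over $z \in X^B$ of diagonal contributions weighted by a factor of $p+1$ in each of the three tensor factors (reflecting the valence of the $(p+1)$-regular Bruhat-Tits tree, equivalently the degree of the Hecke-type correspondence linking adjacent components). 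Under the Jacquet-Langlands identification $\Phi_{\calO}/I^{[p]}_{i,n} \cong S^B_2(N^+, \calO)/I_{i,n}$ --- which is already implicit in Theorem \ref{level-raise-intro} --- the pairing $(\hphantom{a},\hphantom{a})$ against $\phi_1 \otimes \phi_2 \otimes \phi_3$ collapses this to $\sum_{z \in X^B} \phi_1(z)\phi_2(z)\phi_3(z)$, with overall scaling $(p+1)^3$.

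The main obstacle will be the explicit geometric computation of the potential map applied to $\theta_*[\mathfrak{X}]$ on the semistable model of $\mathfrak{X}^3$: this model has strata of codimension up to three, and one must enumerate precisely how the closure of the diagonal meets each stratum, verifying that all "off-diagonal" boundary contributions vanish modulo the Eisenstein part (which is killed by residual irreducibility and the freeness hypothesis of Theorem \ref{level-raise-intro}), and that the "diagonal" contributions assemble to the claimed sum with exactly the factor $(p+1)^3$. This is the technical heart of the argument and is the analogue in our ramified triple-product setting of the delicate intersection calculations of \cite{Liu-cubic} and \cite{LTXZZ}; the extra subtlety here is the rank-three structure of the singular quotient, so that the decomposition indexed by $j=1,2,3$ must be kept visible throughout the computation.
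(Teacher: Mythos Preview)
Your proposal is correct and follows essentially the same route as the paper: compute $\partial_p\Theta^{[p]}_n$ via Liu's potential map on the semistable resolution $\calY$ of $\mathfrak{X}^3$, identify the relevant piece of $A^2(Y_k,\Lambda)^0$ through the featuring cycles, and push the answer through the level-raising isomorphism to land in $\otimes_i S^B_2(N^+,\calO)/I_{i,n}$. The paper makes concrete exactly the step you flag as the main obstacle: the restriction of the diagonal to the special fiber is represented by the characteristic function $\mathbf{1}_0(p)$ of $X^B_0(p)$ (the singular locus of $X_k$), and the factor $p+1$ in each slot arises because the explicit isomorphism $\Phi_{\calO}/I^{[p]}_{i,n}\cong S^B_2(N^+,\calO)/I_{i,n}$ is realised by $\tfrac{1}{2}(\pi_{+,*}+\epsilon_{p,i}\pi_{-,*})$, and $\pi_{\pm,*}\mathbf{1}_0(p)$ is the constant function $p+1$ on $X^B$.
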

This formula is the analogue of the \emph{first reciprocity law} in \cite[Theorem 4.1]{BD-Main}, the \emph{congruence formulae} in \cite[Theorem 4.11]{Liu-HZ} and \cite[Theorem 4.5]{Liu-cubic}. The right-hand side of \eqref{reci-formula} obviously represents the period integral $I(f^{B}_{1}, f^{B}_{2}, f^{B}_{3})$ and this formula suggests that the class $\Theta^{[p]}_{n}$ should be useful to bound the Selmer group. However as we explained before, the singular quotient is of rank $3$ and the class $\Theta^{[p]}_{n}$ itself is not enough to bound the Selmer group. Combining the present work with the companion article \cite{Wang} suggests the following intuitive picture: the Selmer group of the level raised triple $\triplef^{[p]}$ skips the possibility of being rank $1$ and goes directly to rank $3$ from the rank $0$ Selmer group for the original triple $\triplef$. Therefore it should be reasonable to expect three global classes that fill up the singular quotient. We invite the reader to compare this with the paper of Darmon-Rotger \cite{DR-2}. They considered the so-called \emph{unbalanced case} where the weights of the triple of modular forms are $(2, 1, 1)$. The singular quotient at $p$ there is a direct sum of four one-dimensional spaces. They managed to find $4$ global classes starting from the primitive diagonal cycle class similar to our $\Theta^{[p]}_{n}$ in this article using techniques from $p$-adic deformations (Hida families) which fill up the singular quotient and such that each class only appears in exactly one of the four one-dimensional spaces. Using these classes, they can bound certain equivariant part of the Mordell-Weil group of an elliptic curve but not the Selmer group. This suggests that we should be able modify the class $\Theta^{[p]}_{n}$ and construct three classes which each sits in exactly one of the three direct summands of 
\begin{equation*}
\oplus^{3}_{j=1}(\otimes^{3}_{i=1}S^{B}_{2}(N^{+}, \calO){/I_{i, n}}).
\end{equation*}
Moreover these three classes should satisfy a similar reciprocity law as the one we obtained for $\Theta^{[p]}_{n}$ in Theorem \ref{recip-intro}. Using these conjectural classes, we are indeed able to bound the Selmer group for the triple product motive. Therefore we regard the present work as the first step towards the rank $0$ case of the Bloch-Kato conjecture for the triple product motive of modular forms. 

\subsection{The symmetric cube motive}
It is natural to consider the degenerate case when $f_{1}=f_{2}=f_{3}=f$ for a single modular form $f\in S^{\new}_{2}(\Gamma_{0}(N))$. We denote by $\rmV_{f}$ the Galois representation attached to $f$.  In this case the triple tensor product representation factors as 
\begin{equation*}
\rmV_{f}^{\otimes 3}(-1)= \mathrm{Sym}^{3} \rmV_{f}(-1)\oplus \rmV_{f}\oplus \rmV_{f}.
\end{equation*}
Correspondingly, we have a factorization of the $L$-function
\begin{equation*}
L(f\otimes f \otimes f, s)= L(\mathrm{Sym}^{3}f, s) L(f, s-1)^{2}.
\end{equation*}
We introduce in this case the notion of \emph{ $(n, 1)$-admissible primes} for $f$.  A prime $p$ is an $(n, 1)$-admissible prime for $f$ if 
\begin{enumerate}
\item $p\nmid Nl$;
\item $l\nmid p^{2}-1 $;
\item $\varpi^{n}\mid p+1-a_{p}(f)$.
\end{enumerate}
Comparing to the definition of an $n$-admissible prime for a triple $(f_{1}, f_{2}, f_{3})$,  we require here that $\epsilon_{p,i}=1$ for all $i=1, 2, 3$. We let $\triplef=(f, f, f)$. Let $\rmM^{\diamond}_{n}(\triplef)(-1)$ be the symmetric cube component of $\rmM^{[p]}_{n}(\triplef)(-1)$.  Then we find that the singular quotient $\rmH^{1}_{\sing}(\QQ_{p}, \rmM^{\diamond}_{n}(\triplef)(-1))$ at $p$ is in fact of rank $1$. Therefore the Euler system argument can be applied here. However, the factorization of the $L$-function and the Galois representation suggest that the results concerning the rank $0$ case of the Bloch-Kato conjecture here will be conditional on the non-vanishing of $L(f, 1)$. Therefore we have the following result.
\begin{thm}\label{main-symm}
Suppose that the modular form $f$ satisfies the following assumptions: 
\begin{enumerate}
\item the maximal ideals $\frakm_{f}$ are all residually irreducible;
\item the $\TT_{\frakm_{f}}$-module $S^{B}_{2}(N^{+}, \calO)_{\frakm_{f}}$ is free of rank $1$;
\item the residual Galois representation $\bar{\rho}_{f}$ is surjective;
\item  the value $L(f, 1)$ is non-vanishing.
\end{enumerate}
Assume that value $L(\mathrm{Sym}^{3}f, 2)$ is non-zero. Then the Bloch-Kato Selmer group for $\mathrm{Sym}^{3} \rmV_{f}(-1)$
\begin{equation*}
\rmH^{1}_{f}(\QQ, \mathrm{Sym}^{3} \rmV_{f}(-1))=0. 
\end{equation*}
\end{thm}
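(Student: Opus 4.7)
The plan is to run a Kolyvagin-style Euler system argument on the symmetric cube summand, combining the first reciprocity law (Theorem \ref{recip-intro}) of this paper with the second reciprocity law at a prime of good reduction that is the main result of the companion paper \cite{Wang}. The key point is that although the full triple-product singular quotient has $\calO_{n}$-rank $3$, its symmetric cube projection $\rmH^{1}_{\sing}(\QQ_{p}, \rmM^{\diamond}_{n}(\triplef)(-1))$ is free of rank $1$, so the single class $\Theta^{[p], \diamond}_{n}$ obtained by projecting $\Theta^{[p]}_{n}$ onto the symmetric cube summand is morally an annihilator class in the sense of Kolyvagin.

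First I would use the $L$-function factorization $L(f\otimes f\otimes f, s) = L(\Sym^{3} f, s)\, L(f, s-1)^{2}$ together with the hypotheses $L(\Sym^{3} f, 2)\neq 0$ and $L(f, 1)\neq 0$ to deduce $L(f\otimes f\otimes f, 2)\neq 0$, and then apply \cite{KH91} to conclude that the period $I(f^{B}, f^{B}, f^{B}) = \sum_{z\in X^{B}} f^{B}(z)^{3}$ is non-zero; one then bounds the $\varpi$-adic valuation of this period so that, for every sufficiently large $n$, it is non-zero modulo $\varpi^{n}$. Next, for any $(n, 1)$-admissible prime $p$, Theorem \ref{recip-intro} specialized to $\triplef=(f,f,f)$ and projected onto the symmetric cube summand shows that $\partial_{p}\Theta^{[p], \diamond}_{n}$ generates the rank-one module $\rmH^{1}_{\sing}(\QQ_{p}, \rmM^{\diamond}_{n}(\triplef)(-1))$. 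The second reciprocity law from \cite{Wang} (applied to auxiliary admissible primes of good reduction) is invoked to check that $\loc_{v}(\Theta^{[p], \diamond}_{n})$ lies in the correct Bloch--Kato local condition for all primes $v\neq p$, so that $\Theta^{[p], \diamond}_{n}$ is a Selmer class with condition relaxed only at $p$. Then, assuming for contradiction that $s\in \rmH^{1}_{f}(\QQ, \Sym^{3} \rmV_{f}(-1)/\varpi^{n})$ is non-zero, a Chebotarev argument using the surjectivity of $\bar\rho_{f}$ produces an $(n,1)$-admissible prime $p$ such that the local Tate pairing $\langle \loc_{p}(s), \partial_{p}\Theta^{[p], \diamond}_{n}\rangle_{p}$ is non-zero. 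Poitou--Tate global duality then gives
\begin{equation*}
0 \;=\; \sum_{v} \langle \loc_{v}(s), \loc_{v}(\Theta^{[p], \diamond}_{n})\rangle_{v} \;=\; \langle \loc_{p}(s), \partial_{p}\Theta^{[p], \diamond}_{n}\rangle_{p} \;\neq\; 0,
\end{equation*}
which is the desired contradiction.

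The main obstacle will be the Chebotarev step that simultaneously imposes the $n$-admissibility condition $\varpi^{n}\mid p+1-a_{p}(f)$ (which pins down the Frobenius conjugacy class of $p$ in an extension cut out by $\bar\rho_{f}$ and its $\varpi^{n}$-torsion) and forces $\loc_{p}(s)\neq 0$ in the finite part of local cohomology. Carrying this out requires a fine analysis of $\Sym^{3}\bar\rho_{f}$ as a $G_{\QQ}$-module and of the action of complex conjugation, in order to show that the relevant field extensions are linearly disjoint; here the assumption that $\bar\rho_{f}$ is surjective (so that the image of $\Sym^{3}\bar\rho_{f}$ is as large as possible) is essential. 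A secondary technical difficulty is controlling the $\varpi$-adic denominator of the period integral $I(f^{B}, f^{B}, f^{B})$ so that its non-vanishing over $\CC$ transfers to non-vanishing modulo $\varpi^{n}$ for the range of $n$ needed to annihilate the (necessarily finitely generated) Selmer group; this is where the residual irreducibility hypothesis (1) and the $l$-isolatedness hypothesis (2) are used, together with the factorization through $I(f^{B}, f^{B}, f^{B})$ of the Ichino--type formula implicit in \cite{KH91}.
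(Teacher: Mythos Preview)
Your overall architecture matches the paper's proof: projecting to the symmetric cube summand to obtain a rank-one singular quotient, using the first reciprocity law to show $\partial_{p}\Theta^{\diamond[p]}_{n}$ is nonzero there, choosing an $(n,1)$-admissible prime via Chebotarev so that $\loc_{p}(s)$ is nonzero, and concluding by global duality. However, there are two genuine gaps.

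First, your invocation of the second reciprocity law from \cite{Wang} is both unnecessary and misplaced. The second reciprocity law relates the \emph{finite} localization of the diagonal cycle class at an admissible prime of good reduction to a period integral on a different Shimura set; it is the engine for the rank-one case and for the bipartite Euler system, not a tool for verifying local conditions. The paper obtains the local conditions on $\Theta^{\diamond[p]}_{n}$ directly: for $v\nmid pNl$ the integral model $\mathfrak{X}^{3}$ has good reduction, so the Abel--Jacobi image is automatically unramified; at $l$ one cites \cite{Nekovar} to land in $\rmH^{1}_{f}(\QQ_{l},\rmM^{\diamond}_{n}(\triplef)(-1))$. No auxiliary primes or companion paper are needed here.

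Second, your global duality equation
\[
0 \;=\; \sum_{v} \langle \loc_{v}(s), \loc_{v}(\Theta^{\diamond[p]}_{n})\rangle_{v} \;=\; \langle \loc_{p}(s), \partial_{p}\Theta^{\diamond[p]}_{n}\rangle_{p}
\]
tacitly assumes the local pairings vanish at \emph{all} $v\neq p$. But at primes $v\mid N$ neither $s$ nor $\Theta^{\diamond[p]}_{n}$ is known to satisfy a local condition forcing orthogonality. The paper handles this by Lemma~\ref{sel-pairing}(2): since $\rmH^{1}(\QQ_{v},\mathrm{Sym}^{3}\rmV_{f}(-1))=0$ for $v\nmid l$ (tame purity), each such local pairing is annihilated by a fixed power $\varpi^{n_{v}}$ independent of $n$. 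One then fixes $n_{N}=\sum_{v\mid N}n_{v}$, lets $n_{I}$ be the $\varpi$-valuation of the period, chooses $n>n_{I}+n_{N}$, and runs the duality argument with $\varpi^{n_{N}}s$ in place of $s$. Without this step your displayed chain of equalities is unjustified.
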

This theorem is proved in the last section of this article and is based on a familiar Euler system argument, see \cite{Gro-Koly}, \cite{BD-Main}, \cite{Liu-HZ} and \cite{Liu-cubic} for a few examples. A similar result in the rank $1$ case for the Bloch-Kato Selmer group of $\mathrm{Sym}^{3} \rmV_{f}(-1)$ will be proved in \cite{Wang}.

\subsection{Bipartite Euler system and other applications}
We close this introduction by discussing some applications and questions the present work points to. 
In the companion work \cite{Wang}, we prove the \emph{second reciprocity law} in our setting which should have applications to the rank $1$ case of the Bloch-Kato conjecture for the triple product motive of modular forms. It also implies that the diagonal cycle classes $\Theta^{[p]}_{n}$ and the period integrals $I(f^{B}_{1}, f^{B}_{2}, f^{B}_{3})$ form an analogue of the Bipartite Euler system in the sense of Howard \cite{How}. Note that the definition there requires the only Frobenius eigenvalues on the local Galois representation at an $n$-admissible prime $p$ are $p$ and $1$ and therefore the singular quotient is of  rank $1$. This is obviously different from our setting. Therefore what we have for the triple product motive is only an analogue of his  Bipartite Euler system. On the other hand, the classes $\Theta^{\diamond[p]}_{n}$ we produced for the symmetric cube motive of modular forms do form a Bipartite Euler system in his sense.  

Secondly, by replacing the $l$-adic \'{e}tale cohomology and $l$-adic Abel-Jacobi map in the present work with crystalline cohomology and the $p$-adic Abel-Jacobi map one should be able to prove certain type of $p$-adic Gross-Zagier formulas for certain triple product $p$-adic $L$-functions. See \cite{Hsieh} for the relevant construction of $p$-adic $L$-functions and see \cite{BD-unif} for the type of $p$-adic Gross-Zagier formula for Heegner points on Shimura curves. 

\subsection{Notations and conventions} We will use common notations and conventions in algebraic number theory and algebraic geometry. The cohomologies  of schemes appear in this article will be understood as computed over the \'{e}tale sites. 

For a field $K$, we denote by $K^{\ac}$ a separable closure of $K$ and put $G_{K}:=\Gal(K^{ac}/K)$ the Galois group of $K$. We let $\Adel$ be the ring of ad\`{e}les over $\QQ$ and $\Adel^{\infty}$ be the subring of finite ad\`{e}les.  For a prime $p$, $\Adel^{\infty, (p)}$ is the prime-to-$p$ part of  $\Adel^{\infty}$. 

Let $K$ be a local field with ring of integers $\calO_{K}$ and residue field $k$. We let $I_{K}$ be the inertia subgroup of $G_{K}$. Suppose $\rmM$ is a $G_{K}$-module. Then the finite part $\rmH^{1}_{\mathrm{fin}}(K, \rmM)$ of $\rmH^{1}(K, \rmM)$ is defined to be $H^{1}(k, \rmM^{I_{K}})$ and the singular part $\rmH^{1}_{\mathrm{sing}}(K, \rmM)$ of $\rmH^{1}(K, \rmM)$ is defined to be the quotient of $\rmH^{1}(K, \rmM)$ by the image of $\rmH^{1}_{\mathrm{fin}}(K, \rmM)$. 

Two quaternion algebras will be used in this article: the definite quaternion algebra $B$ with discriminant $N^{-}$ and the indefinite quaternion algebra $B^{\prime}$ with discriminant $pN^{-}$.

\subsection*{Acknowledgements} We would like to thank Henri Darmon and Liang Xiao for valuable discussions about this work. We would like to thank Minglun Hsieh for introducing the work of Bertonili-Darmon to the author in graduate school and for his interest to the present work. We also acknowledge the deep debt this article owes to the pioneering works of Bertonili-Darmon and Yifeng Liu. We are truly grateful to Pengfei Guan for his generous support in the unsettling period of time during which this article was written.

\section{Review of weight spectral sequence}
\subsection{Nearby cycles on semi-stable schemes} Let $K$ be a henselian discrete valuation field with residue field $k$ of characteristic $p$ and valuation ring $\calO_{K}$. We fix a uniformizer $\pi$ of $\calO_{K}$. We set $S=\Spec(\calO_{K})$, $s=\Spec(k)$ and $\eta=\Spec(K)$. Let $K^{\ac}$ be a separable closure of $K$ and $K_{\ur}$ the maximal unramified extension of $K$ in $K^{\ac}$. We denote by $k^{\ac}$ the residue field of $K_{\ur}$. 
Let $I_{K}=\Gal(K^{\ac}/K_{\ur})\subset G_{K}=\Gal(K^{\ac}/K)$ be the inertia group. Let $l$ be a prime different from $p$. We set $t_{l}: I_{K}\rightarrow \Lambda(1)$ to be the canonical surjection given by 
\begin{equation*}
\sigma \mapsto (\sigma(\pi^{1/l^{m}})/\pi^{1/l^{m}})_{m}
\end{equation*} 
for every $\sigma\in I_{K}$. 

Let $\mathfrak{X}$ be a \emph{strict semi-stable scheme} over $S$ purely of relative dimension $n$ which we also assume to be proper. This means that $\mathfrak{X}$ is locally of finite presentation and Zariski locally \'{e}tale over $$\Spec(\calO_{K}[X_{1}, \cdots, X_{n}]/(X_{1}\cdots X_{r}-\pi))$$ for some integer $1\leq r\leq n$. We let $X_{k}$ be the special fiber of $\mathfrak{X}$ and $X_{k^{\ac}}$ be its base-change to $k^{\ac}$. Let $X=\mathfrak{X}_{\eta}$ be the generic fiber of $\mathfrak{X}$ and $X_{K_{\ur}}$ be its base-change to $K_{\ur}$. We have the following natural maps
\begin{equation*}
\begin{aligned}
&i:X_{k}\rightarrow \mathfrak{X},\\
&j: X\rightarrow \mathfrak{X}, \\
&\bar{i}: X_{k^{\ac}}\rightarrow \mathfrak{X}_{\calO_{K_{\ur}}},\\ 
& \bar{j}: X_{K_{\ur}}\rightarrow \mathfrak{X}_{\calO_{K_{\ur}}}. \\
\end{aligned}
\end{equation*}
Throughout this paper, we fix a prime $l\geq 5$. Let $\Lambda$ be $\ZZ/l^{v}, \ZZ_{l}$ or a finite extension of $\ZZ_{l}$. We define 
the  \emph{Nearby cycle sheaf} by
\begin{equation*}
R^{q}\Psi(\Lambda)= \bar{i}^{*}R^{q}\bar{j}_{*}\Lambda
\end{equation*}
and the \emph{Nearby cycle complex} by 
\begin{equation*}
R\Psi(\Lambda)= \bar{i}^{*}R\bar{j}_{*}\Lambda.
\end{equation*}
We regard the latter as an object in the derived category $D^{+}(X_{k^{\ac}}, \Lambda[I_{K}])$ of sheaves of $\Lambda$-modules with continuous $I_{K}$-actions. By proper base change, we always have 
\begin{equation*}
\rmH^{*}(X_{k^{\ac}}, R\Psi(\Lambda))=\rmH^{*}(X_{K^{\ac}}, \Lambda).
\end{equation*} 

Let $D_{1},\cdots, D_{m} $ be the set of irreducible components of $X_{k}$. For each index set $I\subset \{1, \cdots, m\}$ of cardinality $p$, we set $X_{I, k}=\cap_{i\in I} D_{i}$. This is a smooth scheme of dimension $n-p$. For $1\leq p \leq m-1$, let
\begin{equation}
X^{(p)}_{k}=\bigsqcup_{I\subset \{1, \cdots, m\}, \mathrm{Card}(I)=p+1} X_{I, k}
\end{equation}
 and 
 \begin{equation}
 a_{p}: X^{(p)}_{k}\rightarrow X_{k}
 \end{equation}
 be the projection, we have $a_{p *}\Lambda=\wedge^{p+1}a_{0 *}\Lambda$. Consider the Kummer exact sequence in the case $\Lambda=\ZZ/l^{v}$
 \begin{equation}
 0\rightarrow \Lambda(1)\rightarrow \calO^{*}_{X} \rightarrow \calO^{*}_{X}\rightarrow 0.
 \end{equation}
 Let $\partial(\pi)\in i^{*}R^{1}j_{*}\Lambda(1) $ be the image of $\pi$ under the coboundary map by applying $i^{*}Rj_{*}$ to the above exact sequence. We let $\theta: \Lambda_{X_{k}}\rightarrow i^{*}R^{1}j_{*}\Lambda(1)$ be the map sending $1$ to $\partial(\pi)$ and $\delta:\Lambda_{X_{k}}\rightarrow a_{0*}\Lambda$ be the canonical map. Then we have the following results regarding to the resolution of the Nearby cycle sheaf.
\begin{proposition}\label{nearby-cycle-q}
We have the following.
\begin{enumerate}
\item There is an isomorphism of exact sequences 
\begin{equation*}
\begin{tikzcd}
\Lambda_{X_{k}}\arrow{r}{\delta}\arrow{d}&a_{0*}\Lambda\arrow{r}{\delta\wedge}\arrow{d}&\cdots\arrow{r}{\delta\wedge}\arrow{d}&a_{n*}\Lambda\arrow{r}\arrow{d}&0\\
\Lambda_{X_{k}}\arrow{r}{\theta}&i^{*}R^{1}j_{*}\Lambda(1)\arrow{r}{\theta\cup}&\cdots\arrow{r}{\theta\cup}&i^{*}R^{n+1}j_{*}\Lambda(n+1)\arrow{r}&0\\
\end{tikzcd}.
\end{equation*}
\item For $p\geq 0$, we have an exact sequence
\begin{equation*}
\begin{tikzcd}
R^{p}\Psi(\Lambda)\arrow{r}{\theta\cup}&i^{*}R^{p+1}j_{*}\Lambda(1)\arrow{r}{\theta\cup}&\cdots\arrow{r}{\theta\cup}&i^{*}R^{n+1}j_{*}\Lambda(n+1-p)\arrow{r}&0\\
\end{tikzcd}.
\end{equation*}
\item For $p\geq 0$, we have a quasi-isomorphism of complexes
\begin{equation*}
R^{p}\Psi(\Lambda)[-p]\xrightarrow{\sim} [a_{p*}\Lambda(-p)\xrightarrow{\delta\wedge}\cdots\xrightarrow{\delta\wedge} a_{n*}\Lambda(-p)\rightarrow0]
\end{equation*}
\end{enumerate}
\end{proposition}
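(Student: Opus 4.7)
The proof is by local analysis on the strict semi-stable model, following the classical approach of Rapoport--Zink. \'Etale-locally, $\mathfrak{X}\cong \Spec(\calO_K[X_1,\ldots,X_n]/(X_1\cdots X_r-\pi))$, with irreducible components $D_i = \{X_i=0\}$. On the generic fiber $X$, the Kummer sequence produces tame symbols $\partial(X_i)\in i^{*}R^{1} j_{*}\Lambda(1)$, and the defining equation $X_1\cdots X_r = \pi$ yields the key relation $\partial(\pi) = \sum_{i=1}^{r}\partial(X_i)$. My plan is to first establish the explicit local structure of $i^{*}R^{q} j_{*}\Lambda(q)$, and then deduce parts (1)--(3) by formal and combinatorial manipulation.

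For the local structure, I would show that $i^{*}R^{q} j_{*}\Lambda(q)$ is locally free on $X_k$ with basis indexed by the $q$-fold cup products $\partial(X_{i_1})\cup\cdots\cup\partial(X_{i_q})$ with $i_1<\cdots<i_q$ in $\{1,\ldots,r\}$. This can be obtained by induction on $r$ using the fibration structure of the local model (peeling off the divisor $D_r$ and applying absolute cohomological purity), or directly from Deligne's computation of $Rj_{*}\Lambda$ for the complement of a strict normal crossings divisor. The resulting identification with $a_{q-1,*}\Lambda = \bigwedge^{q} a_{0,*}\Lambda$ supplies the vertical isomorphisms in part (1); the squares commute because $\partial(\pi)\cup(-)$ corresponds, on $\bigwedge^{\bullet}a_{0,*}\Lambda$, to wedging with $\sum[D_i]$, which is exactly the map $\delta\wedge$ under the convention that $\delta$ sends $1$ to the tuple of restrictions to the components.

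For part (2), after this identification, the bottom row of (1) becomes a Koszul-type complex on $\bigwedge^{\bullet}a_{0,*}\Lambda$ induced by multiplication by $\sum[D_i]$. Combining the distinguished triangle $i^{*}\Lambda\to i^{*}Rj_{*}\Lambda \to R\Psi(\Lambda)\to[1]$ with the explicit local identification of the stalks of $R^{q}\Psi(\Lambda)$ as exterior powers of a Milnor-type lattice of rank $r-1$, one sees that $R^{p}\Psi(\Lambda)$ is precisely the kernel appearing at the left of the claimed sequence; exactness in higher degrees is then the standard fact that the Koszul complex on a sum of independent generators is exact in positive homological degrees. Part (3) is immediate: under the identification from (1), the complex in (3) is exactly the tail, in cohomological degrees $\geq p$, of the above Koszul complex, and (2) exhibits its unique cohomology as $R^{p}\Psi(\Lambda)$ in degree $p$, which gives the claimed quasi-isomorphism.

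The main obstacle is the first step — showing that $i^{*}R^{q}j_{*}\Lambda(q)$ has the expected basis of tame symbols — which requires either a careful inductive argument with absolute purity or a direct appeal to the Deligne--Rapoport--Zink computation of $Rj_{*}$ for strict normal crossings complements. A secondary, but nontrivial, difficulty is the bookkeeping of Tate twists and of Koszul signs so that $\delta\wedge$ and $\theta\cup$ genuinely intertwine under the chosen identifications rather than merely up to a sign.
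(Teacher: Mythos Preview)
Your outline is correct and is essentially the approach underlying the cited result; there is nothing substantively to criticize. The paper, however, does not supply an argument at all: for parts (1) and (2) it simply refers to \cite[Corollary~1.3]{Saito}, and for part (3) it notes that the quasi-isomorphism is an immediate consequence of combining (1) and (2). So the comparison is not between two proofs but between a citation and your actual sketch of what that citation contains --- local analysis via the Kummer symbols $\partial(X_i)$, the identification $i^{*}R^{q}j_{*}\Lambda(q)\cong \wedge^{q}a_{0*}\Lambda=a_{q-1,*}\Lambda$ coming from purity for strict normal crossings complements, and the Koszul exactness giving (2). Your remark that part (3) follows formally from (1) and (2) matches the paper exactly. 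The ``obstacles'' you flag (purity and sign bookkeeping) are real, but they are precisely the content of the cited theorem of Saito, so in the context of this paper they are not expected to be reproved.
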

\begin{proof}
The first two statements are taken from  \cite[Corollary 1.3]{Saito} and the last one is an immediate consequence of the first two. 
\end{proof}
\subsection{Monodromy filtration and spectral sequence} Suppose $A$ is an object in an abelian category and $N$ is a nilpotent endomorphism on $A$. We define two filtrations on $A$.
\begin{itemize}
\item We define the kernel filtration $F_{\bullet}$ by putting $F_{p}A=\ker(N^{p+1}: A\rightarrow A)$ for $p\geq 0$.
\item We define the image filtration $G^{\bullet}$ by putting $G^{q}A=\im(N^{q}: A\rightarrow A)$ for $q>0$.
\end{itemize}
Using these two filtrations, 
\begin{itemize}
\item we define the convolution filtration $M_{\bullet}A$ by $M_{r}A=\oplus_{p-q=r}F_{p}A\cap G^{q}A$. 
\end{itemize}
To calculate the graded piece of the convolution filtration, we define the induced $G^{\bullet}$-filtration on the graded piece of $F_{\bullet}$ by $G^{q}Gr^{F}_{p}A=\im(G^{q}A\cap F_{p}A\rightarrow Gr^{F}_{p}A)$.  Then we see that the $r$-th graded piece of the convolution filtration $M_{\bullet}A$ is given by
\begin{equation*}
Gr^{M}_{r}A\cong\bigoplus_{p-q=r}Gr^{q}_{G}Gr^{F}_{p}A.
\end{equation*}
The convolution filtration in this case is known as the \emph{Monodromy filtration} and it is characterized by
\begin{enumerate}
\item $M_{n}A=0$ and $M_{-n-1}A=0$.
\item $N: A\rightarrow A$ sends $M_{r}A$ into $M_{r-2}A$ for $r\in\ZZ$.
\item $N^{r}: Gr^{M}_{r}A\rightarrow Gr^{M}_{-r}A$ is an isomoprhism. 
\end{enumerate}

Let $A=R\Psi(\Lambda)$. Let $T$ be an element in $I_{K}$ such that $t_{l}(T)$ is a generator of $\Lambda(1)$ then $T$ induces a nilpotent operator $T-1$ on $R\Psi(\Lambda)$. Let $N=(T-1)\otimes\breve{T}$ where $\breve{T}\in \Lambda(-1)$ be the dual of $t_{l}(T)$. Then with respect to $N$, we have the following characterization of the Monodromy filtration on $R\Psi(\Lambda)$
\begin{enumerate}
\item $M_{n}R\Psi(\Lambda)=0$ and $M_{-n-1}R\Psi(\Lambda)=0$.
\item $N: R\Psi(\Lambda)(1)\rightarrow R\Psi(\Lambda)$ sends $M_{r}R\Psi(\Lambda)(1)$ into $M_{r-2}R\Psi(\Lambda)$ for $r\in\ZZ$.
\item $N^{r}: Gr^{M}_{r}R\Psi(\Lambda)(r)\rightarrow Gr^{M}_{-r}R\Psi(\Lambda)$ is an isomoprhism. 
\end{enumerate}
We can use Proposition \ref{nearby-cycle-q} to calculate the Monodromy filtration on $R\Psi(\Lambda)$. In addition to Proposition \ref{nearby-cycle-q}, we need the following results in \cite[Lemma 2.5, Corollary 2.6]{Saito}. 
\begin{enumerate}
\item The kernel filtration $F_{p}R\Psi(\Lambda)$ is given by the canonical truncated filtration $\tau_{\leq p}R\Psi(\Lambda)$ and therefore 
\begin{equation}\label{grad-ker}
Gr^{F}_{p}R\Psi(\Lambda)\cong R^{p}\Psi(\Lambda)[-p]\cong [a_{p*}\Lambda(-p)\xrightarrow{\delta\wedge}\cdots\xrightarrow{\delta\wedge} a_{n*}\Lambda(-p)\rightarrow0].
\end{equation}
\item The image filtration $G^{q}Gr^{F}_{p}R\Psi(\Lambda)$ on $Gr^{F}_{p}R\Psi(\Lambda)$ is given by the truncation in \eqref{grad-ker} to $p+q$ position
\begin{equation*}
G^{q}Gr^{F}_{p}R\Psi(\Lambda)= [a_{p+q*}\Lambda(-p)\xrightarrow{\delta\wedge}\cdots\xrightarrow{\delta\wedge} a_{n*}\Lambda(-p)\rightarrow0].
\end{equation*}
\item Combining the above two results, we have 
\begin{equation*}
Gr^{q}_{G}Gr^{F}_{p}R\Psi(\Lambda)=a_{p+q*}\Lambda[-p-q](-p)
\end{equation*}
and therefore we arrive at the following equation 
\begin{equation*}
Gr^{M}_{r}R\Psi(\Lambda)=\bigoplus_{p-q=r}a_{p+q*}\Lambda[-p-q](-p).
\end{equation*}
\end{enumerate}
The Monodromy filtration induces the \emph{weight spectral sequence}
\begin{equation}\label{wt-seq}
\rmE^{p,q}_{1}= \rmH^{p+q}(X_{k^{\ac}}, Gr^{M}_{-p}R\Psi(\Lambda))\Rightarrow \rmH^{p+q}(X_{k^{\ac}}, R\Psi(\Lambda))=\rmH^{p+q}(X_{K^{\ac}}, \Lambda).
\end{equation}
The $\rmE_{1}$-term of this spectral sequence can be made explicit by
\begin{equation*}
\begin{aligned}
\rmH^{p+q}(X_{k^{\ac}}, Gr^{M}_{-p}R\Psi(\Lambda))&=\bigoplus_{i-j=-p, i\geq0, j\geq0}\rmH^{p+q-(i+j)}(X^{(i+j)}_{k^{\ac}}, \Lambda(-i))\\
&=\bigoplus_{i\geq\mathrm{max}(0, -p)}\rmH^{q-2i}(X^{(p+2i)}_{k^{\ac}}, \Lambda(-i)).\\
\end{aligned}
\end{equation*}
This spectral sequence is first found by Rapoport-Zink in \cite{RZ} and thus is also known as the Rapoport-Zink spectral sequence. 

\subsection{Examples in dimension $1$ and $3$} We will make the weight spectral sequence explicit in dimension $1$ and dimension $3$ which are the only cases that will be used in the computations later. The following convention will used throughout this article: we will write $\rmH^{*}(a_{p*}\Lambda)$ instead of $\rmH^{*}(X_{k^{\ac}}, a_{p*}\Lambda)=\rmH^{*}(X^{(p)}_{k^{\ac}}, \Lambda)$.
\subsubsection{One dimensional case}: Let $\mathfrak{X}$ be a relative curve over $\Spec(\calO_{K})$. Then we immediately calculate that
\begin{equation}
\begin{aligned}
&Gr^{M}_{-1}R\Psi(\Lambda)=a_{1*}\Lambda[-1], \\
&Gr^{M}_{0}R\Psi(\Lambda)=a_{0*}\Lambda,  \\
&Gr^{M}_{1}R\Psi(\Lambda)=a_{1*}\Lambda[-1](-1). \\
\end{aligned}
\end{equation}
The $\rmE_{1}$-page of the weight spectral sequence is given by
\begin{center}
\begin{tikzpicture}[thick,scale=0.8, every node/.style={scale=0.8}]
  \matrix (m) [matrix of math nodes,
    nodes in empty cells,nodes={minimum width=5ex,
    minimum height=5ex,outer sep=-5pt},
    column sep=1ex,row sep=1ex]{
                &      &     &     & \\
          2     &  \rmH^{0}(a_{1*}\Lambda)(-1) &  \rmH^{2}(a_{0*}\Lambda)  & & \\
          1     &       & \rmH^{1}(a_{0*}\Lambda) &    & \\
          0     &    & \rmH^{0}(a_{0*}\Lambda) &  \rmH^{0}(a_{1*}\Lambda) &\\
    \quad\strut &   -1  &  0  &  1  & \strut \\};
\draw[thick] (m-1-1.east) -- (m-5-1.east) ;
\draw[thick] (m-5-1.north) -- (m-5-5.north) ;
\end{tikzpicture}
\end{center}
and it clearly degenerates at the $\rmE_{2}$-page.  We therefore have the Monodromy filtration
\begin{equation*}
0\subset^{\rmE^{1,0}_{2}} M_{1}\rmH^{1}(X_{K^{\ac}}, \Lambda)\subset^{\rmE^{0,1}_{2}} M_{0}\rmH^{1}(X_{K^{\ac}}, \Lambda)\subset^{\rmE^{-1,2}_{2}} M_{-1}\rmH^{1}(X_{K^{\ac}},\Lambda)
\end{equation*}
with graded pieces given by
\begin{equation*}
\begin{aligned}
&Gr^{M}_{-1}\rmH^{1}(X_{K^{\ac}}, \Lambda)=\ker[\rmH^{0}(a_{1*}\Lambda(-1))\xrightarrow{\tau} \rmH^{2}(a_{0*}\Lambda)]\\
&Gr^{M}_{0}\rmH^{1}(X_{K^{\ac}}, \Lambda)= \rmH^{1}(a_{0*}\Lambda);\\
&Gr^{M}_{1}\rmH^{1}(X_{K^{\ac}}, \Lambda)= \coker[\rmH^{0}(a_{0*}\Lambda)\xrightarrow{\rho} \rmH^{0}(a_{1*}\Lambda)];\\
\end{aligned}
\end{equation*}
where $\tau$ is the \emph{Gysin morphism} and $\rho$ is the \emph{restriction morphism}. 
Note that we have the following commutative diagram
\begin{equation}\label{picard-lef}
\begin{tikzcd}
\rmH^{1}(X_{K^{\ac}}, \Lambda(1)) \arrow[r] \arrow[d, "N"] & \ker[\rmH^{0}(a_{1*}\Lambda)\xrightarrow{\tau} \rmH^{2}(a_{0*}\Lambda)(1)] \arrow[d, "N"] \\
\rmH^{1}(X_{K^{\ac}}, \Lambda)                  & \coker[\rmH^{0}(a_{0*}\Lambda)\xrightarrow{\rho} \rmH^{0}(a_{1*}\Lambda)] .\arrow[l]
\end{tikzcd}
\end{equation}
In this case, we recover the \emph{Picard-Lefschetz formula} if we identify $\rmH^{0}(a_{1*}\Lambda)$ with the \emph{vanishing cycles} $\oplus_{x}R\Phi(\Lambda)_{x}$ on $X_{k^{\ac}}$ where $x$ runs through the set of singular points $X^{(1)}_{k}$. We refer the readers to \cite[Example 2.4.6]{Illusie} for the details about this case.

\subsubsection{Three dimensional case} Let $\mathfrak{X}$ be a relative threefold over $\Spec(\calO_{K})$. We can also easily list the graded pieces of the Monodromy filtration on $R\Psi(\Lambda)$
\begin{equation*}
\begin{aligned}
&Gr^{M}_{-3}R\Psi(\Lambda)=a_{3*}\Lambda[-3], \\
&Gr^{M}_{-2}R\Psi(\Lambda)=a_{2*}\Lambda[-2], \\
&Gr^{M}_{-1}R\Psi(\Lambda)=a_{1*}\Lambda[-1]\oplus a_{3*}\Lambda[-3](-1), \\
&Gr^{M}_{0}R\Psi(\Lambda)=a_{0*}\Lambda\oplus a_{2*}\Lambda[-1](-1),  \\
&Gr^{M}_{1}R\Psi(\Lambda)=a_{1*}\Lambda[-1](-1)\oplus a_{3*}\Lambda[-3](-2),\\
&Gr^{M}_{2}R\Psi(\Lambda)=a_{2*}\Lambda[-2](-2), \\
&Gr^{M}_{3}R\Psi(\Lambda)=a_{3*}\Lambda[-3](-3).\\
\end{aligned}
\end{equation*}
The $\rmE_{1}$-page of the weight spectral sequence is given below

\begin{equation}\label{E1-primitive}
\begin{tikzpicture}[thick,scale=0.6, every node/.style={scale=0.6}]
\matrix (m) [matrix of math nodes,
    nodes in empty cells,nodes={minimum width=5ex,
    minimum height=5ex,outer sep=-5pt},
    column sep=1ex,row sep=1ex]{
                &      &     &     & \\
          6    &\rmH^{0}(a_{3*}\Lambda)(-3) &  \rmH^{2}(a_{2*}\Lambda)(-2)  &\rmH^{4}(a_{1*}\Lambda)(-1) & \rmH^{6}(a_{0*}\Lambda) \\
          5    &    &\rmH^{1}(a_{2*}\Lambda)(-2)  &\rmH^{3}(a_{1*}\Lambda)(-1) &\rmH^{5}(a_{0*}\Lambda)& \\
          4    &       &\rmH^{0}(a_{2*}\Lambda)(-2) &\rmH^{2}(a_{1*}\Lambda)(-1)\oplus\rmH^{0}(a_{3*}\Lambda)(-2) &\rmH^{4}(a_{0*}\Lambda)\oplus\rmH^{2}(a_{2*}\Lambda)(-1)&\rmH^{4}(a_{1*}\Lambda)\\
          3   &        &         &\rmH^{1}(a_{1*}\Lambda)(-1)  &\rmH^{3}(a_{0*}\Lambda)\oplus\rmH^{1}(a_{2*}\Lambda)(-1) &\rmH^{3}(a_{1*}\Lambda)\\
          2   &        &         &\rmH^{0}(a_{1*}\Lambda)(-1)  &\rmH^{2}(a_{0*}\Lambda)\oplus\rmH^{0}(a_{2*}\Lambda)(-1) &\rmH^{2}(a_{1*}\Lambda)\oplus \rmH^{0}(a_{3*}\Lambda)(-1) &\rmH^{2}(a_{2*}\Lambda)\\
           1   &        &         &                             &\rmH^{1}(a_{0*}\Lambda) &\rmH^{1}(a_{1*}\Lambda) &\rmH^{1}(a_{2*}\Lambda)\\
           0   &        &         &       &\rmH^{0}(a_{0*}\Lambda) &\rmH^{0}(a_{1*}\Lambda) &\rmH^{0}(a_{2*}\Lambda)& \rmH^{3}(a_{3*}\Lambda)\\
          \quad\strut & -3  &  -2  &  -1  & 0 &1 &2 &3 & \strut \\};

\draw[thick] (m-1-1.east) -- (m-9-1.east) ;
\draw[thick] (m-9-1.north) -- (m-9-9.north) ;
\end{tikzpicture}
\end{equation}
and it does not necessarily degenerates at $\rmE_{2}$. 

\subsection{Potential map and Galois cohomology} Let $\rmM$ be a $G_{K}$-module over $\Lambda$, then we have the following exact sequence of Galois cohomology groups
\begin{equation}\label{fin-sing}
0\rightarrow \rmH^{1}_{\mathrm{fin}}(K, \rmM)\rightarrow \rmH^{1}(K, \rmM)\xrightarrow{\partial_{p}} \rmH^{1}_{\sing}(K, \rmM)\rightarrow 0  
\end{equation}
where 
\begin{equation}
\rmH^{1}_{\mathrm{fin}}(K, \rmM)=\rmH^{1}(k, \rmM^{I_{K}})
\end{equation}
is called the \emph{unramified} or \emph{finite} part of the cohomology group $\rmH^{1}(K, \rmM)$ and $\rmH^{1}_{\mathrm{sing}}(K, \rmM)$ defined as the quotient of $\rmH^{1}(K, \rmM)$ by its finite part is called the \emph{singular quotient} of  $\rmH^{1}(K, \rmM)$. The natural quotient map $\rmH^{1}(K, \rmM)\xrightarrow{\partial_{p}} \rmH^{1}_{\sing}(K, \rmM)$ will be referred to as the \emph{singular quotient map}. Let $x\in\rmH^{1}(K, \rmM)$, we call the element $\partial_{p}(x)\in \rmH^{1}_{\sing}(K, \rmM)$ the \emph{singular residue} of $x$. Let $\rmM=\rmH^{n}(X_{K^{\ac}},\Lambda(r))$ be $r$-th twist of the middle degree cohomology of $X_{K^{\ac}}$. We review in this section how to calculate the singular quotient using the weight spectral sequence recalled above.  In \cite{Liu-cubic}, the author postulated certain situations where the singular quotient can be calculated by a formula similar to the Picard-Lefschetz formula. What will be presented below is the same as his construction of the \emph{potential map}. We will not recall his general machinery but only the case of a curve or a threefold. We need the following elementary lemma.
\begin{lemma}
Let $\rmM=\rmH^{n}(X_{K^{\ac}},\Lambda(r))$, then we have 
\begin{equation}
\rmH^{1}_{\sing}(K, \rmM)\cong (\frac{\rmM(-1)}{N\rmM})^{G_{k}}
\end{equation}
\end{lemma}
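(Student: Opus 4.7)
The plan is to unwind both sides in two steps: first reduce the singular quotient to an $I_K$-coinvariant (up to $G_k$-invariants), then implement the twist that converts $T-1$ into the canonical monodromy $N$.

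First I would apply the inflation--restriction sequence for $1 \to I_K \to G_K \to G_k \to 1$ acting on $\rmM$. Since $G_k$ has cohomological dimension one, this yields a short exact sequence
\begin{equation*}
0 \to \rmH^{1}(G_k, \rmM^{I_K}) \to \rmH^{1}(K, \rmM) \to \rmH^{1}(I_K, \rmM)^{G_k} \to 0.
\end{equation*}
The left-hand term is the finite part $\rmH^{1}_{\fin}(K, \rmM)$ by definition, so by the defining exact sequence \eqref{fin-sing} we get a canonical identification $\rmH^{1}_{\sing}(K, \rmM) \cong \rmH^{1}(I_K, \rmM)^{G_k}$. This is the step that uses essentially nothing about $\rmM$ beyond being a $G_K$-module over $\Lambda$.

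Next I would compute $\rmH^{1}(I_K, \rmM)$. Because $\Lambda$ is $l$-torsion (or $l$-adic) and the wild inertia is pro-$p$ with $p \ne l$, the action of $I_K$ on $\rmM$ factors through the tame quotient $I_K^{\tame} \cong \Lambda(1)$ via $t_l$. Choose $T \in I_K$ such that $t_l(T)$ is a generator of $\Lambda(1)$; then $T-1$ is nilpotent on $\rmM$ (this is where one uses that $\rmM$ is the middle cohomology of a strictly semistable $X$, giving the local monodromy theorem), and standard continuous cohomology for $\widehat{\ZZ}_l$ gives $\rmH^{1}(I_K, \rmM) \cong \rmM/(T-1)\rmM$.

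Finally I would convert $T-1$ into $N$ via the canonical factorisation $T-1 = N \otimes t_l(T)$, where $N = (T-1)\otimes\breve{T}: \rmM \to \rmM(-1)$ is the monodromy operator introduced earlier. Multiplication by $t_l(T) \in \Lambda(1)$ gives a $G_k$-equivariant isomorphism $\rmM(-1) \xrightarrow{\sim} \rmM$ sending $N\rmM$ onto $(T-1)\rmM$, so we obtain a $G_k$-equivariant identification $\rmM/(T-1)\rmM \cong \rmM(-1)/N\rmM$. Taking $G_k$-invariants and chaining with the identification from the first step yields the desired isomorphism. The one place that requires genuine care is verifying $G_k$-equivariance at this last step and checking independence of the choice of $T$, which is precisely what the convention $N = (T-1)\otimes\breve{T}$ is designed to guarantee; this is the only subtle point, everything else is bookkeeping.
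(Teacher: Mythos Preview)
Your argument is correct and is the standard proof; the paper gives no argument of its own here, simply citing \cite[Lemma 2.6]{Liu-cubic}, and your inflation--restriction reduction followed by the tame-quotient computation is exactly what such a reference contains. One small wording issue: multiplication by the fixed element $t_l(T)$ is not itself a $G_k$-equivariant map $\rmM(-1)\to\rmM$ (since $t_l(T)$ transforms under the cyclotomic character); rather, it is the composite $\rmH^1(I_K,\rmM)\xrightarrow{c\mapsto c(T)}\rmM/(T-1)\rmM\xrightarrow{\otimes\breve T}\rmM(-1)/N\rmM$ that is $G_k$-equivariant, the two $T$-dependent identifications compensating each other --- but you already flag exactly this as the point requiring care.
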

\begin{proof}
This is well-known. The details can be found in \cite[Lemma 2.6]{Liu-cubic} for example. 
\end{proof}
\subsubsection{One dimensional case} In this case, let $\rmM=\rmH^{1}(X_{K^{\ac}}, \Lambda(1))$, we can use the Picard-Lefschetz formula. Recall the diagram 
\begin{equation*}
\begin{tikzcd}
\rmH^{1}(X_{K^{\ac}}, \Lambda(1)) \arrow[r] \arrow[d, "N"] & \ker[\rmH^{0}(a_{1*}\Lambda)\xrightarrow{\tau} \rmH^{2}(a_{0*}\Lambda(1))] \arrow[d, "N"] \\
\rmH^{1}(X_{K^{\ac}}, \Lambda)                  & \coker[\rmH^{0}(a_{0*}\Lambda)\xrightarrow{\rho} \rmH^{0}(a_{1*}\Lambda)]  \arrow[l]
\end{tikzcd}
\end{equation*}
in \eqref{picard-lef}.
Then we have 
\begin{equation}\label{1-sing}
\begin{aligned}
\rmH^{1}_{\sing}(K, \rmM) &\cong (\frac{\rmM(-1)}{N\rmM})^{G_{k}}\cong (\frac{\coker[\rmH^{0}(a_{0*}\Lambda)\xrightarrow{\rho} \rmH^{0}(a_{1*}\Lambda)]}{N\ker[\rmH^{0}(a_{1*}\Lambda)\xrightarrow{\tau} \rmH^{2}(a_{0*}\Lambda(1))]})^{G_{k}}
\end{aligned}
\end{equation}
Composing $\tau$ and $\rho$, we have 
\begin{equation}\label{1-potential}
\begin{aligned}
 (\frac{\coker[\rmH^{0}(a_{0*}\Lambda)\xrightarrow{\rho} \rmH^{0}(a_{1*}\Lambda)]}{N\ker[\rmH^{0}(a_{1*}\Lambda)\xrightarrow{\tau} \rmH^{2}(a_{0*}\Lambda(1)))]})^{G_{k}} \cong\coker[\rmH^{0}(a_{0*}\Lambda)\xrightarrow{\rho} \rmH^{0}(a_{1*}\Lambda)\xrightarrow{\tau}  \rmH^{2}(a_{0*}\Lambda(1))]^{G_{k}}.
\end{aligned}
\end{equation}

\subsubsection{Three dimensional case} In this case, in light of later applications we will only consider $\rmM=\rmH^{3}(X_{K^{\ac}}, \Lambda(2))$. We put the following assumptions on $\rmM$.
\begin{assumption}\label{assump-E}
We assume the weight spectral sequence satisfy the following assumptions.
\begin{enumerate}
\item The weight spectral sequence which converges to $\rmM=\rmH^{3}(X_{K^{\ac}}, \Lambda(2))$ degenerates in its $\rmE_{2}$-page. Thus it induces the following filtration
\begin{equation*}
\begin{aligned}
&0\subset^{\rmE^{3,0}_{2}(2)}M_{3}\rmH^{3}(X_{K^{\ac}}, \Lambda(2))\subset^{\rmE^{2,1}_{2}(2)} M_{2}\rmH^{3}(X_{K^{\ac}}, \Lambda(2))\\&\subset^{\rmE^{1,2}_{2}(2)} M_{1}\rmH^{3}(X_{K^{\ac}}, \Lambda(2)) \subset^{\rmE^{0,3}_{2}(2)}  M_{0}\rmH^{3}(X_{K^{\ac}}, \Lambda(2))\\
&\subset^{\rmE^{-1,4}_{2}(2)}M_{-1}\rmH^{3}(X_{K^{\ac}}, \Lambda(2))\subset^{\rmE^{-2,5}_{2}(2)} M_{-2}\rmH^{1}(X_{K^{\ac}}, \Lambda(2))\\
&\subset^{\rmE^{-3,6}_{2}(2)} M_{-3}\rmH^{3}(X_{K^{\ac}},\Lambda(2))=\rmH^{3}(X_{K^{\ac}}, \Lambda(2)).\\
\end{aligned}
\end{equation*}

\item The term $\rmE^{i, 3-i}_{2}(2)$ in the above filtration which has a non-trivial subquotient that is $G_{k}$-invariant is only $\rmE^{-1, 4}_{2}(2)$. 

\item The term $\rmE^{i, 3-i}_{2}(2)(-1)=\rmE^{i, 3-i}_{2}(1)$ in the above filtration which has a non-trivial subquotient which is $G_{k}$-invariant is only $\rmE^{1, 2}_{2}(1)$. 

\end{enumerate}
\end{assumption}

Under the assumptions above, we have the following commutative diagram 
\begin{equation*}
\begin{tikzcd}
\rmH^{3}(X_{K^{\ac}}, \Lambda(2))^{G_{k}} \arrow[r] \arrow[d, "N"] & (\rmE^{-1,4}_{2}(2))^{G_{k}} \arrow[d, "N"] \\
\rmH^{3}(X_{K^{\ac}}, \Lambda(1))^{G_{k}}                  & (\rmE^{1,2}_{2}(1))^{G_{k}}  \arrow[l]
\end{tikzcd}
\end{equation*}
and from we obtain 
\begin{equation*}
\begin{aligned}
\rmH^{1}_{\sing}(K, \rmM) &\cong (\frac{\rmM(-1)}{N\rmM})^{G_{k}} \cong (\frac{\rmE^{1,2}_{2}(1)}{N \rmE^{-1,4}_{2}(2)})^{G_{k}}\\
\end{aligned}
\end{equation*}
We have
\begin{equation*}
\rmE^{-1,4}_{2}(2)=\frac{\ker[\rmH^{2}(a_{1*}\Lambda(1))\oplus \rmH^{0}(a_{3*}\Lambda)\xrightarrow{(\tau+\rho,\tau)}\rmH^{4}(a_{0*}\Lambda(2))\oplus \rmH^{2}(a_{2*}\Lambda(1))]}{\im[\rmH^{0}(a_{2*}\Lambda)\xrightarrow{(\tau, \rho)} \rmH^{2}(a_{1*}\Lambda(1))\oplus\rmH^{0}(a_{3*}\Lambda) ]}
\end{equation*}
and 
\begin{equation*}
\rmE^{1,2}_{2}(1)=\frac{\ker[\rmH^{2}(a_{1*}\Lambda(1))\oplus \rmH^{0}(a_{3*}\Lambda)\xrightarrow{(\rho,\tau)}\rmH^{2}(a_{2*}\Lambda(1))]}{\im[\rmH^{2}(a_{0*}\Lambda(1))\oplus \rmH^{0}(a_{2*}\Lambda)  \xrightarrow{(\rho,\tau+\rho)} \rmH^{2}(a_{1*}\Lambda(1))\oplus\rmH^{0}(a_{3*}\Lambda) ]}.
\end{equation*}
Then we find that
\begin{equation}\label{potential}
\begin{aligned}
& (\frac{\rmE^{1,2}_{2}(1)}{N \rmE^{-1,4}_{2}(2)})^{G_{k}}=(\frac{\im[\rmH^{2}(a_{1*}\Lambda(1))\xrightarrow{\tau}\rmH^{4}(a_{0*}\Lambda(2))]}{\tau\im[\rmH^{2}(a_{0*}\Lambda(1))\xrightarrow{\rho} \rmH^{2}(a_{1*}\Lambda(1))]})^{G_{k}}.\\
\end{aligned}
\end{equation}
 We will denote by 
 \begin{equation*}
 A^{2}(X_{k}, \Lambda)^{0}=\im[\rmH^{2}(a_{1*}\Lambda(1))\xrightarrow{\tau}\rmH^{4}(a_{0*}\Lambda(2))]^{G_{k}}
\end{equation*} 
which appear on the numerator of \eqref{potential} and by 
\begin{equation*}
A_{2}(X_{k}, \Lambda)^{0}=\im[\rmH^{2}(a_{0*}\Lambda(1))\xrightarrow{\rho}\rmH^{2}(a_{1*}\Lambda(1))]^{G_{k}}
\end{equation*}
which appear on the denominator of  \eqref{potential}. 
Let 
\begin{equation*}
A^{2}(X_{k}, \Lambda)^{0}\xrightarrow{\eta} \rmH^{1}_{\sing}(K, \rmH^{3}(X_{K^{\ac}}, \Lambda(2)))
\end{equation*}
be the natural map and we have the following exact sequence
\begin{equation}\label{sing-exact}
 A_{2}(X_{k}, \Lambda)^{0}\xrightarrow{\nabla}A^{2}(X_{k}, \Lambda)^{0}\xrightarrow{\eta} \rmH^{1}_{\sing}(K, \rmH^{3}(X_{K^{\ac}}, \Lambda(2)))\rightarrow 0.
\end{equation} 
the natural map $\nabla$ induced by $\tau$ is called the potential map, see \cite[Definition 2.5]{Liu-cubic}.  

We will now explain how this map is related to the image of the {Abel-Jacobi map}. Consider the cycle class map 
\begin{equation*}
\mathrm{cl}: \Ch^{2}(X_{K}, \Lambda)\rightarrow \rmH^{4}(X_{K}, \Lambda(2))
\end{equation*}
where $\Ch^{2}(X_{K}, \Lambda)$ is the Chow group of $X_{K}$ with coefficient in $\Lambda$. Let $\Ch^{2}(X_{K}, \Lambda)_{0}$ be the kernel of the composite map
\begin{equation*}
\Ch^{2}(X_{K}, \Lambda)\xrightarrow{\mathrm{cl}} \rmH^{4}(X_{K}, \Lambda(2))\rightarrow  \rmH^{4}(X_{K^{\ac}}, \Lambda(2)).
\end{equation*}
Then the cycle class map induces the following \emph{Abel-Jacobi map}
\begin{equation*}
\AJ_{\Lambda}: \Ch^{2}(X_{K}, \Lambda)_{0}\rightarrow \rmH^{1}(K, \rmH^{3}(X_{K^{\ac}}, \Lambda(2))).
\end{equation*}
Suppose that $z\in \Ch^{2}(X_{K}, \Lambda)_{0}$. Let $\calZ$ be the Zariski closure of $z$ in $\mathfrak{X}$ and $z^{*}$ be the cycle of codimension $2$ in $\mathfrak{X}$ supported on $\calZ$ and whose restriction to $X_{K}$ is $z$. Let $\tilde{z}\in \rmH^{4}(a_{0*}\Lambda(2))^{G_{k}} =\rmH^{4}(X^{(0)}_{k^{\ac}}, \Lambda(2))^{G_{k}}$ be the image of $z^{*}$ under the following composite maps
\begin{equation*}
\begin{aligned}
&\rmH^{4}_{\calZ}(\mathfrak{X}, \Lambda(2))\rightarrow \rmH^{4}(\mathfrak{X}, \Lambda(2))\rightarrow \rmH^{4}(X_{k}, \Lambda(2))
\rightarrow \rmH^{4}(X_{k^{\ac}}, \Lambda(2))^{G_{k}}\rightarrow \rmH^{4}(X^{(0)}_{k^{\ac}}, \Lambda(2))^{G_{k}}.\\
\end{aligned}
\end{equation*}
We have the following results of Liu \cite{Liu-cubic}.
\begin{proposition}\label{cal-aj}
The element $\tilde{z}$ belongs to $ A^{2}(X_{k}, \Lambda)^{0}$. Moreover  we have
\begin{equation*}
\partial_{p}\AJ_{\Lambda}(z)=\eta(\tilde{z})
\end{equation*} 
in $\rmH^{1}_{\sing}(K, \rmH^{3}(X_{k^{\ac}}, \Lambda(2)))$. 
\end{proposition}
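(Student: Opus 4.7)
The plan is to establish the two assertions in turn, each by tracing the image of $z^{*}$ through compatible long exact sequences. For the first assertion, that $\tilde{z} \in A^{2}(X_{k}, \Lambda)^{0}$, I would locate $\tilde{z}$ in the weight spectral sequence \eqref{wt-seq} for $\rmH^{4}(X_{K^{\ac}}, \Lambda(2))$. A priori $\tilde{z}$ sits in the summand $\rmH^{4}(a_{0*}\Lambda(2))^{G_{k}}$ of $\rmE_{1}^{0, 4}(2)$. Compatibility of the cycle class map with restriction to strata forces $d_{1}(\tilde{z}) = 0$, so $\tilde{z}$ defines a class in $\rmE_{2}^{0, 4}(2)$. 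By the hypothesis $z \in \Ch^{2}(X_{K}, \Lambda)_{0}$, the image of $z^{*}$, and hence of $\tilde{z}$, in $\rmH^{4}(X_{K^{\ac}}, \Lambda(2))$ is zero. Under the $\rmE_{2}$-degeneration recorded in Assumption \ref{assump-E}, this image coincides with the contribution of $\tilde{z}$ to $Gr^{M}_{0} \rmH^{4}(X_{K^{\ac}}, \Lambda(2))$, so $\tilde{z}$ must lie in the image of $d_{1}^{-1, 4}$. The projection of $d_{1}^{-1, 4}$ onto the $\rmH^{4}(a_{0*}\Lambda(2))$-summand, restricted to the piece $\rmH^{2}(a_{1*}\Lambda(1)) \subset \rmE_{1}^{-1, 4}(2)$, is exactly the Gysin map $\tau$; hence $\tilde{z} \in \im(\tau)^{G_{k}} = A^{2}(X_{k}, \Lambda)^{0}$.

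For the second assertion, I would interpret both $\partial_{p}\AJ_{\Lambda}(z)$ and $\eta(\tilde{z})$ as arising from a single underlying construction. On the Abel-Jacobi side, $\AJ_{\Lambda}(z)$ is the edge class produced by the Hochschild-Serre spectral sequence
\begin{equation*}
\rmE_{2}^{a, b} = \rmH^{a}(K, \rmH^{b}(X_{K^{\ac}}, \Lambda(2))) \Rightarrow \rmH^{a+b}(X_{K}, \Lambda(2)),
\end{equation*}
applied to $\mathrm{cl}(z)$; composing with $\partial_{p}$ amounts to projecting onto the monodromy-$G_{k}$-invariant quotient described by \eqref{potential}. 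On the cycle side, $z^{*}$ lies in $\rmH^{4}_{\calZ}(\mathfrak{X}, \Lambda(2))$, and the localization triangle
\begin{equation*}
R\Gamma_{X_{k}}(\mathfrak{X}, \Lambda(2)) \to R\Gamma(\mathfrak{X}, \Lambda(2)) \to R\Gamma(X_{K}, \Lambda(2))
\end{equation*}
combined with proper base change provides a connecting map whose compatibility with Hochschild-Serre is well known. Tracing $z^{*}$ through this diagram identifies the resulting element in $\rmH^{4}(a_{0*}\Lambda(2))^{G_{k}}$ with $\tilde{z}$, and the composite ``connecting map $\to$ singular residue'' equals $\eta$ by the very construction of the potential map $\nabla$ in \eqref{sing-exact}.

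The hard part is the second step: ensuring that the map ``Hochschild-Serre boundary followed by singular quotient'' agrees on the nose with ``specialize $z^{*}$ to $\tilde{z}$ and apply $\eta$''. This requires reconciling two distinct spectral sequences (Hochschild-Serre for $X_{K}/K$ versus the weight spectral sequence for $R\Psi\Lambda$), and verifying that the identification $\rmH^{1}_{\sing}(K, \rmM) \cong (\rmE^{1, 2}_{2}(1)/N\rmE^{-1, 4}_{2}(2))^{G_{k}}$ is compatible with the connecting maps of the localization triangle. This is essentially a careful bookkeeping exercise with cycle classes and spectral sequences, following the pattern laid out in \cite{Liu-cubic}.
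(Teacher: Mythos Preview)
Your proposal is correct and aligns with the paper's approach: the paper's own proof is simply a citation to \cite[Lemma 2.17, Theorem 2.18]{Liu-cubic}, and what you have outlined is precisely the content of those results. In fact you have supplied more detail than the paper itself, since you sketch the spectral-sequence bookkeeping that Liu carries out rather than merely invoking it.
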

\begin{proof}
The first statement follows easily from \cite[Lemma 2.17]{Liu-cubic} and the second is a special case of \cite[Theorem 2.18]{Liu-cubic}. 
\end{proof}
\section{Arithmetic level raising for Shimura curves} 
\subsection{Shimura curves and Shimura sets}
Let $N$ be a positive integer with a factorization $N=N^{+}N^{-}$ with $N^{+}$ and $N^{-}$ coprime to each other. We assume that $N^{-}$ is square-free and is a product of odd number of primes. Let $B$ be the definite quaternion algebra over $\QQ$ with discriminant $N^{-}$ and let $B^{\prime}$ be the indefinite quaternion algebra over $\QQ$ with discriminant $pN^{-}$. Let $\calO_{B^{\prime}}$ be a maximal order of $B^{\prime}$ and  let $\calO_{B^{\prime}, N^{+}}$ be the Eichler order of level $N^{+}$ in $\calO_{B^{\prime}}$. We let $G^{\prime}$ be the algebraic group over $\QQ$ given by $B^{\prime \times}$ and let $K^{\prime}_{N^{+}}$ be the open compact of $G^{\prime}(\mathbf{A}^{\infty})$ defined by $\hat{\calO}^{\times}_{B^{\prime}, N^{+}}$. We let $G$ be the algebraic group over $\QQ$ given by $B^{\times}$. Note that we have an isomorphism $G^{\prime}(\mathbf{A}^{\infty, (p)})\xrightarrow{\sim} G(\mathbf{A}^{\infty, (p)})$ and via this isomorphism we will view $K^{\prime p}$ as an open compact subgroup of $G(\mathbf{A}^{\infty, (p)})$ for any open compact subgroup $K^{\prime}$ of $G^{\prime}(\mathbf{A}^{\infty})$.

Let $X=X^{B^{\prime}}_{N^{+}, pN^{-}}$ be the Shimura curve over $\QQ$ with level $K^{\prime}=K^{\prime}_{N^{+}}$. The complex points of this curve is given by the following double coset
\begin{equation*}
X(\CC)=G^{\prime}(\QQ)\backslash \calH^{\pm} \times G^{\prime}(\mathbf{A}^{\infty})/K^{\prime}.
\end{equation*}
There is a natural model $\mathfrak{X}$ over $\ZZ_{(p)}$ of $X$. Recall it represents the following moduli problem. Let $S$ be a test scheme over $\ZZ_{(p)}$ and then $\mathfrak{X}(S)$ is the set of triples $(A, \iota, \bar{\eta})$ where
\begin{enumerate}
\item $A$ is an $S$-abelian scheme of relative dimension $2$;
\item $\iota: \calO_{B^{\prime}}\hookrightarrow \End_{S}(A)$ is an embedding which is special in the sense of \cite[131-132]{BC-unifor};
\item $\bar{\eta}$ is an equivalence class of isomorphisms 
\begin{equation*}
\eta: V^{p}(A)\xrightarrow{\sim} B^{\prime}(\mathbf{A}^{\infty,(p)})
\end{equation*}
up to multiplication by $K^{p}$, where $$V^{p}(A)=\prod_{q \neq p}T_{q}(A)\otimes \QQ$$ is the prime to $p$ rational Tate module of $A$. 
\end{enumerate}
It is well known this moduli problem is representable by a projective scheme over $\mathbb{Z}_{(p)}$ of dimension $1$. We will usually consider the base change of $\mathfrak{X}$ to $\ZZ_{p^{2}}$ and we will denote it by the same notation. Let $\FF_{q}$ be a finite extension of $\FF_{p}$, we will denote by $X_{\FF_{q}}$ the base change of the special fiber $X_{\FF_{p}}$ of $\mathfrak{X}$ to $\FF_{q}$.  

Let $K$ be the open compact subgroup of $G(\mathbf{A}^{\infty})$ given by the Eichler order $\calO_{B, N^{+}}$. We define the \emph{Shimura set} $X^{B}$ by the following double coset
\begin{equation}\label{Shi-set}
X^{B}= G(\QQ)\backslash G(\mathbf{A}^{\infty})/K.
\end{equation}
Let $K_{0}(p)$ be the open compact subgroup of $G(\mathbf{A}^{\infty})$ given by the Eichler order $\calO_{B, pN^{+}}$ of level $pN^{+}$ in $B$. We define $X^{B}_{0}(p)$ by the  double coset
\begin{equation}\label{Shi-set-p}
X^{B}_{0}(p)= G(\QQ)\backslash G(\mathbf{A}^{\infty})/K_{0}(p).
\end{equation}
Let $\Lambda$ be a ring. We will use the following notations throughout this article.
\begin{itemize}
\item We denote by $\rmH^{0}(X^{B}, \Lambda)$ the set of continuous function on $X^{B}$ valued in $\Lambda$. We will also write this space as $S^{B}_{2}(N^{+}, \Lambda)$ and refer to it as the space of \emph{quaternionic modular forms} of level $N^{+}$

\item In the same way, we define $\rmH^{0}(X^{B}_{0}(p), \Lambda)$.  We will also write this space as $S^{B}_{2}(pN^{+}, \Lambda)$ and refer to it as the space of \emph{quaternionic modular forms} of level $pN^{+}$

\item Let $W$ be a scheme we will let  $\PP^{d}(W)$ be the $\PP^{d}$-bundle over $W$. For example $\PP^{1}(X^{B})$ is the $\PP^{1}$-bundle over $X^{B}$.  
\end{itemize}

\subsection{The $p$-adic upper half plane} Let $k^{\ac}$ be an algebraic closure of $\FF_{p}$ and let $W_{0}=W(k^{\ac})$ be ring of Witt vectors of $k^{\ac}$ with fraction field $K_{0}$. Let $D$ be the quaternion algebra over $\QQ_{p}$ and let $\calO_{D}$ be the maximal order of $D$. We write 
\begin{equation*}
\calO_{D}=\ZZ_{p^{2}}[\Pi]/(\Pi^{2}-p)
\end{equation*}
with $\Pi a=\sigma(a)\Pi$ with $a\in \ZZ_{p^{2}}$. 
We first recall the notion of a \emph{special formal $\calO_{D}$-module}. Let $S$ be a $W_{0}$-scheme. A special formal $\calO_{D}$-module over $S$ is a formal $p$-divisible group $X$ of dimension $2$ and height $4$ with an $\calO_{D}$-action 
\begin{equation*}
\iota: \calO_{D}\rightarrow \End_{S}(X)
\end{equation*}
such that $\Lie(X)$ is a locally free $\ZZ_{p^{2}}\otimes \calO_{S}$-module of rank $1$. We fix a special $\calO_{D}$-module $\mathbb{X}$ over $k^{\ac}$ whose Dieudonn\'{e} module is denoted by $\mathbb{M}$. Consider the functor $\calM$ on the category Nilp of $W_{0}$-schemes $S$ such that $p$ is locally nilpotent in $\calO_{S}$ such that $\calM(S)$ classifies the isomorphism classes of pairs $(X, \rho_{X})$ where
\begin{enumerate}
\item $X$ is special formal $\calO_{D}$-module;
\item $\rho_{X}: X\times_{S}\bar{S}\rightarrow \mathbb{X}\times_{\FF} \bar{S}$ is a quasi-isogney. 
\end{enumerate}
The functor $\calM$ is represented by a formal scheme over $W_{0}$ which we also denote by $\calM$. The formal scheme $\calM$ breaks into a disjoint union
\begin{equation*}
\calM=\bigsqcup_{i\in\ZZ}\calM_{i}
\end{equation*}
according to the height $i$ of the quasi-isogeny $\rho_{X}$. Each formal scheme $\calM_{i}$ is isomorphic to the \emph{$p$-adic upper half plane} $\calH_{p}$ base-changed to $W_{0}$. The group $\GL_{2}(\QQ_{p})$ acts naturally on the formal scheme $\calM$ and each $\calM_{i}$ affords an action of the group $\GL^{0}_{2}(\QQ_{p}):=\{g\in\GL_{2}(\QQ_{p}): \ord_{p}(\det(g))=0\}$. We review  the descriptions of the special fiber of the formal scheme $\calM_{0}$.  Since this description is well-known, see  \cite{KR00} for example, we content on explaining this on the level of points. Let $(X, \rho)\in \calM(k^{\ac})$ and let $M$ be the covariant Dieudonn\'{e} of $X$ and the action of $\ZZ_{p^{2}}$ on $X$ induced a grading 
\begin{equation}
M=M_{0}\oplus M_{1}
\end{equation}
that satisifies
\begin{equation}
\begin{aligned}
& pM_{0}\subset^{1} VM_{1}\subset^{1} M_{0},\hphantom{a}pM_{1}\subset^{1} VM_{0}\subset^{1} M_{1}\\
& pM_{0}\subset^{1} \Pi M_{1}\subset^{1} M_{0},\hphantom{a}pM_{1}\subset^{1} \Pi M_{0}\subset^{1} M_{1}\\
\end{aligned}
\end{equation}
Since the action of $\Pi$ and $V$ commute, we have the induced maps 
\begin{equation}
\begin{aligned}
& \Pi: M_{0}/VM_{1}\rightarrow M_{1}/VM_{0},\\  
& \Pi: M_{1}/VM_{0}\rightarrow M_{0}/VM_{1}.\\
\end{aligned}
\end{equation}
Since both $M_{0}/VM_{1}$ and $M_{1}/VM_{0}$ are of dimension $1$ and the composite of the two maps is obviously zero, we can conclude that there is an $i\in \ZZ/2\ZZ$ such that $\Pi M_{i}\subset VM_{i}$. Since both $\Pi M_{i}$ and $VM_{i}$ are of colength $1$ in $M_{i+1}$, we conclude that they are in fact equal to each other. We say that $i$ is a \emph{critical index} if $VM_{i}=\Pi M_{i}$ and a critical index always exists for $M$. We let $\tau=\Pi^{-1}V$ and it acts as an automorphism on $M_{i}$ if $i$ is a critical index.

If $0$ is a critical index, then we set $L_{0}=M^{\tau=1}_{0}$ and we call it a \emph{vertex lattice of type $0$}. This is a $\ZZ_{p}$-lattice of rank $2$ and we associate to it the projective line $\PP(L_{0}/p)$. Then $VM_{1}/pM_{0}\subset^{1} M_{0}/pM_{0}=L_{0}/pL_{0}\otimes k^{\ac}$ gives a point on $\PP(L_{0}/pL_{0})(k^{\ac})$. If $1$ is a critical index, then we similarly put $L_{1}=\Pi M^{\tau=1}_{1}$ and we call it a \emph{vertetx lattice of type $1$}. We again associate to it the projective line $\PP(L_{1}/pL_{1})$. Similarly $VM_{0}/pM_{1}\subset^{1} M_{1}/pM_{1}=L_{1}/pL_{1}\otimes k^{\ac}$ gives a point on $\PP(L_{1}/pL_{1})(k^{\ac})$. This construction gives all the irreducible components of the special fiber of  $\calM_{0}$. If both $0$ and $1$ are critical indices, then we will identify the point on $\PP(L_{1}/pL_{1})$ given by $VM_{0}/pM_{1}\subset^{1} M_{1}/pM_{1}=L_{1}/pL_{1}\otimes k^{\ac}$ and the point on  $\PP(L_{0}/pL_{0})$ given by  $VM_{1}/pM_{0}\subset^{1} M_{0}/pM_{0}=L_{0}/pL_{0}\otimes k^{\ac}$. Notice in this case the vertex lattices $L_{0}, L_{1}$ satisfies the following inclusions $$pL_{0}\subset L_{1}\subset L_{0}.$$  We summarize the above discussion in the following proposition.

\begin{proposition}\label{Drinfeld} We have the following statements.
\begin{enumerate}
\item The irreducible components of the special fiber of $\calM_{0}$ can be partitioned into two types according to whether $0$ or $1$ is a critical index. Each irreducible component corresponds to a vertex lattice and is isomorphic to a projective line. 
\item Two irreducible components corresponding two vertex lattices of the same type will not intersect. Let $L_{0}$ be a vertex lattice of type $0$ and $L_{1}$ be a vertex lattice of type $1$, then the corresponding irreducible components intersect if an only if  $pL_{0}\subset L_{1}\subset L_{0}$. 

\item The irreducible components are parametrized by $\GL_{2}(\QQ_{p})/\GL_{2}(\ZZ_{p})\times \ZZ/2\ZZ$. The intersection points of the irreducible components are parametrized by $\GL_{2}(\QQ_{p})/\Iw_{p}$ where $\Iw_{p}$ is the  Iwahori subgroup of $\GL_{2}(\QQ_{p})$. 
\end{enumerate}
\end{proposition}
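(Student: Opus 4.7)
The plan is to leverage the covariant Dieudonn\'e module analysis introduced above to first give a pointwise bijection between the reduced special fiber of $\calM_0$ and the combinatorial data of vertex lattices, and then upgrade this to a scheme-theoretic identification using the formal smoothness of $\calM$ due to Drinfeld.

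For part (1), I would first verify the existence of a critical index. The composition of the two maps $\Pi\colon M_0/VM_1 \to M_1/VM_0$ and $\Pi\colon M_1/VM_0 \to M_0/VM_1$ equals multiplication by $p$, which is zero on these one-dimensional quotients since $pM_i \subset VM_{i+1}$. Hence at least one of these $\Pi$'s vanishes, giving $\Pi M_i \subset VM_i$ for some $i \in \ZZ/2\ZZ$; equality of colengths then forces $\Pi M_i = VM_i$. Fixing such a critical index $i$ and setting $L_i = M_i^{\tau=1}$ (respectively $\Pi M_1^{\tau=1}$ when $i=1$), the position of the sub-Dieudonn\'e module $VM_{i+1}$ inside $M_i/pM_i = (L_i/pL_i)\otimes k^{\ac}$ cuts out a line, producing a point of $\PP(L_i/pL_i)(k^{\ac})$. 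Conversely, any such line reconstructs $VM_{i+1}$ and then the whole Dieudonn\'e module, so the $k^{\ac}$-points of the critical-index-$i$ locus are in bijection with $\PP(L_i/pL_i)(k^{\ac})$; combined with formal smoothness and the dimension count this upgrades to an isomorphism of schemes with $\PP^1$.

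For part (2), two components corresponding to lattices $L_0$ and $L_1$ meet precisely at those points where both $0$ and $1$ are critical indices for the associated $M$. Under this hypothesis, the equalities $\Pi M_j = VM_j$ for both $j$, together with the standard chain $pM_0 \subset \Pi M_1 \subset M_0$, translate after passing to $\tau$-fixed sublattices into $pL_0 \subset L_1 \subset L_0$. Conversely any such chain of $\ZZ_p$-lattices reconstructs a Dieudonn\'e module having both indices critical, giving a common point on the two projective lines. Since the critical index is preserved along each irreducible component, two lattices of the same type can never be in the above incidence relation, so same-type components are disjoint. Part (3) is then essentially group-theoretic: $\GL_2(\QQ_p)$ acts transitively on the set of vertex lattices of each fixed type inside the rational Dieudonn\'e module (a two-dimensional $\QQ_p$-vector space identified via $\tau$-invariants) with stabilizer $\GL_2(\ZZ_p)$, yielding the parametrization $\GL_2(\QQ_p)/\GL_2(\ZZ_p) \times \ZZ/2\ZZ$ of the components. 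Intersection points correspond to chains $pL_0 \subset L_1 \subset L_0$, i.e.\ to Iwahori-fixed flags, hence to $\GL_2(\QQ_p)/\Iw_p$.

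The main obstacle will be the scheme-theoretic upgrade from this pointwise analysis. The Dieudonn\'e-module description gives the bijection on $k^{\ac}$-points essentially by hand, but identifying each component with $\PP^1$ as a scheme and the intersections as ordinary double points requires either invoking Drinfeld's theorem on the regularity of $\calM$ together with the known dimension of the special fiber, or working out the deformation theory of special formal $\calO_D$-modules at the intersection points to control infinitesimal neighborhoods. Once formal smoothness, the irreducible component count, and the local structure at intersection points are in hand, the rest of the proposition follows automatically from the pointwise bijections established above.
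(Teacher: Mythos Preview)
Your proposal is correct and follows essentially the same approach as the paper: parts (1) and (2) are treated as consequences of the Dieudonn\'e-module discussion preceding the proposition (critical indices, vertex lattices, and the chain condition $pL_0\subset L_1\subset L_0$), and part (3) is reduced to identifying stabilizers of lattices and lattice chains in $\GL_2(\QQ_p)$, exactly as you do. The paper is much terser---it simply refers back to the preceding paragraphs for (1) and (2) and, for the scheme-theoretic upgrade you flag as the main obstacle, contents itself with the pointwise description while citing \cite{KR00} for the full statement; so your concern is legitimate but is resolved in the paper by reference rather than by argument.
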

\begin{proof}
The first two points follow from the previous discussions. For the third point, note that the stabilizer of a vertex lattice in $\GL_{2}(\QQ_{p})$ is obviously $\GL_{2}(\ZZ_{p})$. The condition $pL_{0}\subset L_{1}\subset L_{0}$ defines a standard alcove in the Bruhat-Tits building of $\PGL_{2}(\QQ_{p})$ and therefore the stabilizer of the pair $(L_{0}, L_{1})$ is the  Iwahori subgroup $\Iw_{p}$. 
\end{proof}

\subsection{Cerednick-Drinfeld uniformization} From here on,  we will set $k=\FF_{p^{2}}$, $K=\QQ_{p^{2}}$ and $\calO_{K}=\ZZ_{p^{2}}$. Recall we have the integral model $\mathfrak{X}$ of the Shimura curve $X$ over $\calO_{K}$ and we denote by $\mathfrak{X}^{\wedge}$ its completion along the ideal defined by $p$. The Cerednick-Drinfeld uniformization theorem asserts that the ${\mathfrak{X}}^{\wedge}$ can be uniformized by the formal scheme $\calM$:
\begin{equation}\label{p-unifor}
{\mathfrak{X}}^{\wedge} \xrightarrow{\sim} G(\QQ)\backslash \calM \times G(\mathbf{A}^{\infty, (p)})/K^{p}. 
\end{equation}
It follows from the descriptions in Proposition \ref{Drinfeld} and the above uniformization theorem that the irreducible components of the special fiber $X_{k}$ are projective lines. It also follows that $\mathfrak{X}$ has strict semistable reduction. More precisely, we have the following proposition.
\begin{proposition}\label{curve-red}
We have the following descriptions of the scheme $X_{k}$.
\begin{enumerate}
\item The scheme $X_{k}$ is a union $\PP^{1}$-bundles over Shimura sets 
\begin{equation*}
X_{k}= \PP^{1}(X^{B}_{+})\cup \PP^{1}(X^{B}_{-}). 
\end{equation*}
where both $X^{B}_{+}$ and $X^{B}_{-}$ are isomorphic to the Shimura set $X^{B}$ as in \eqref{Shi-set}. 
\item The intersection points of the two $\PP^{1}$-bundles $\PP^{1}(X^{B}_{+})$ and $\PP^{1}(X^{B}_{-})$ are given by
\begin{equation*}
\PP^{1}(X^{B}_{+})\cap \PP^{1}(X^{B}_{-})=X^{B}_{0}(p).
\end{equation*}
This also can be identified with the set of singular points on $X_{k}$. 
\end{enumerate}
\end{proposition}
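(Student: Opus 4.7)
The plan is to leverage the Cerednick--Drinfeld uniformization \eqref{p-unifor} together with the local structural description of the special fiber of $\calM$ provided by Proposition~\ref{Drinfeld}. Both parts of the proposition are to be obtained by passing to $G(\QQ)$--orbits in the local picture and then identifying the resulting double cosets as quaternionic Shimura sets. Throughout, we use that since $p\nmid N^{+}$ and $B_{p}\cong \rmM_{2}(\QQ_{p})$, the local component $K_{p}$ of $K$ (resp.\ $K_{0}(p)_{p}$) is $\GL_{2}(\ZZ_{p})$ (resp.\ the Iwahori subgroup $\Iw_{p}$).

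First I would pass from the formal model to the special fiber in \eqref{p-unifor}. Taking irreducible components of special fibers commutes with the quotient by the discrete group $G(\QQ)$ and by the profinite group $K^{p}$, so the set $\mathrm{Irr}(X_{k})$ of irreducible components of $X_{k}$ is identified with
\begin{equation*}
G(\QQ)\backslash \mathrm{Irr}(\calM_{s})\times G(\AAA^{\infty, (p)})/K^{p}.
\end{equation*}
By Proposition~\ref{Drinfeld}(1)(3), $\mathrm{Irr}(\calM_{s})$ decomposes as the disjoint union of two orbits under $\GL_{2}(\QQ_{p})$ according to the parity of the critical index, each orbit being a copy of $\GL_{2}(\QQ_{p})/\GL_{2}(\ZZ_{p})$, and each irreducible component is canonically a $\PP^{1}$. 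Writing $\{+,-\}$ for the two types, I would denote the corresponding pieces by $\PP^{1}(X^{B}_{+})$ and $\PP^{1}(X^{B}_{-})$, where by definition
\begin{equation*}
X^{B}_{\epsilon}:=G(\QQ)\backslash\bigl(\GL_{2}(\QQ_{p})/\GL_{2}(\ZZ_{p})\times\{\epsilon\}\bigr)\times G(\AAA^{\infty, (p)})/K^{p},\qquad \epsilon\in\{+,-\}.
\end{equation*}

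Next I would identify each $X^{B}_{\epsilon}$ with the Shimura set $X^{B}$. The element of $G(\QQ)\subset \GL_{2}(\QQ_{p})$ acts on $\{+,-\}$ by the parity of $v_{p}(\nu(\gamma))$, where $\nu$ is the reduced norm, so choosing $\gamma_{0}\in G(\QQ)$ with $\nu(\gamma_{0})=p$ (available since $B$ is definite and the norm surjects onto $\QQ^{\times}_{>0}$ by Hasse--Schilling) swaps types. Using this I would first rewrite $X^{B}_{\epsilon}$ as the double coset $G(\QQ)^{+}\backslash \GL_{2}(\QQ_{p})/\GL_{2}(\ZZ_{p})\times G(\AAA^{\infty,(p)})/K^{p}$ with $G(\QQ)^{+}$ the kernel of $v_{p}\circ\nu \bmod 2$, and then reassemble the quotient into $G(\QQ)\backslash G(\AAA^{\infty})/K=X^{B}$ by absorbing the extra $\ZZ/2$ coming from parity into $\GL_{2}(\QQ_{p})/\GL_{2}(\ZZ_{p})$ via the representatives $\mathrm{diag}(1,p^{\epsilon})$. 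Because $K=\GL_{2}(\ZZ_{p})\times K^{p}$, this identification is immediate and yields $X^{B}_{+}\cong X^{B}_{-}\cong X^{B}$; this proves part (1) together with the fact that $X_{k}$ is a union of two $\PP^{1}$-bundles.

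For part (2) I would repeat the procedure with intersection points in place of irreducible components. Two irreducible components of the special fiber of $\calM$ intersect if and only if the corresponding vertex lattices are adjacent, and by Proposition~\ref{Drinfeld}(3) the set of such adjacent pairs is parametrised by $\GL_{2}(\QQ_{p})/\Iw_{p}$; the intersections are moreover precisely the singular points by the strict semistable description in that proposition. Descending to $X_{k}$ gives
\begin{equation*}
\PP^{1}(X^{B}_{+})\cap\PP^{1}(X^{B}_{-})\;\cong\;G(\QQ)\backslash\GL_{2}(\QQ_{p})/\Iw_{p}\times G(\AAA^{\infty,(p)})/K^{p},
\end{equation*}
and identifying $\Iw_{p}$ with the local component of the Eichler order $\calO_{B,pN^{+}}$ at $p$ this double coset is exactly $X^{B}_{0}(p)$ as in \eqref{Shi-set-p}.

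The main obstacle in this program is the bookkeeping of the $\ZZ/2$-type action: one has to keep track of how $G(\QQ)$ simultaneously acts on the Bruhat--Tits type and on the index $i\in\ZZ$ labelling the components $\calM_{i}$ of $\calM$, and show that the final double coset does not collapse the two types into a single orbit (otherwise one would only get a single $\PP^{1}$-bundle). This is handled by choosing representatives so that the type parity and the $\calM_{i}$-parity cancel out, which is a routine but technical verification; everything else is formal from \eqref{p-unifor} and Proposition~\ref{Drinfeld}.
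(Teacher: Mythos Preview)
Your proposal is correct and follows essentially the same route as the paper: apply the Cerednik--Drinfeld uniformization \eqref{p-unifor}, invoke Proposition~\ref{Drinfeld} to parametrize components (resp.\ intersection points) by $\GL_{2}(\QQ_{p})/\GL_{2}(\ZZ_{p})$ (resp.\ $\GL_{2}(\QQ_{p})/\Iw_{p}$), and recognize the resulting double cosets as $X^{B}$ and $X^{B}_{0}(p)$. The paper's own proof is terser and simply declares the double coset ``another way of writing $X^{B}$'', whereas you spell out the $\ZZ/2\ZZ$-type bookkeeping (via parity of $v_{p}\circ\nu$ and the choice of $\gamma_{0}$ with reduced norm $p$); this extra care is justified but not a different method.
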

\begin{proof}
For $(1)$, recall that there are two types of irreducible components of the special fiber of $\calM_{0}$ corresponding to vertex lattices of type $0$ and vertex lattices of type $1$. By the Cerednick--Drinfeld uniformization \eqref{p-unifor}, we only need to notice that the irreducible components of corresponding to vertex lattices of type $0$ or vertex lattice of type $1$ are uniformized by
\begin{equation*}
G(\QQ)\backslash \GL_{2}(\QQ_{p})/\GL_{2}(\ZZ_{p})\times G(\mathbf{A}^{\infty, (p)})/K^{p}.
\end{equation*}
This is just another way of writing $X^{B}$. We will define $X^{B}_{+}$ to be the copy of $X^{B}$ corresponding to the vertex lattice of type $0$ and define $X^{B}_{-}$ to be the copy of $X^{B}$ corresponding to the vertex lattice of type $1$. For $(2)$, the Cerednick--Drinfeld uniformization \eqref{p-unifor} shows that the set of intersection points are given by
\begin{equation*}
G(\QQ)\backslash \GL_{2}(\QQ_{p})/\Iw_{p}\times G(\mathbf{A}^{\infty, (p)})/K^{p}.
\end{equation*}
Again this is another way of writing $X^{B}_{0}(p)$. The rest of the claims are clear.
\end{proof}

We denote by $\pi_{+}: X^{B}_{0}(p)\rightarrow X^{B}_{+}$ and $\pi_{-}: X^{B}_{0}(p)\rightarrow X^{B}_{-}$ the natural specialization maps. These two maps give the Hecke correspondence of $X^{B}$. This can be seen as follows: the map $\pi_{+}$ is induced by sending the pair of vertex lattices $(L_{1}\subset L_{0})$ to $L_{0}$ and the map $\pi_{-}$ is induced by sending $(L_{1}\subset L_{0})$ to $L_{1}$. The two lattices are related by the matrix 
$
\begin{pmatrix}
 0& p\\
 1 & 0\\
\end{pmatrix}
$
that is if one chooses a suitable basis $(e_{1}, e_{2})$ for $L_{0}$ then $(e_{2}, pe_{1})$ will be a basis for $L_{1}$. 

The nontrivial element in $\Gal(k/\FF_{p})$ acts on  $X_{k}$ and it permutes the two $\PP^{1}$-bundles $\PP^{1}(X^{B}_{+})$ and $\PP^{1}(X^{B}_{-})$. On the set $X^{B}_{0}(p)$ it acts by the classical \emph{Atkin-Lehner involution}. For this, see \cite[\S 1.7]{BD-Mumford} for example. 

\subsection{Ramified level raising on Shimura curves} Recall $N=N^{+}N^{-}$ with $N^{+}$ and $N^{-}$ defined previously. Let $f\in \rmS^{\new}_{2}(\Gamma_{0}(N))$ be a newform of weight $2$. Let $E=\QQ(f)$ be the Hecke field of $f$. Let $\lambda$ be a place of $E$ above $l$ and $E_{\lambda}$ be the completion of $E$ at $\lambda$. Let $\varpi$ be a uniformizer of $\calO=\calO_{E_{\lambda}}$ and we write $\calO_{n}=\calO/\varpi^{n}$ for $n\geq 1$. We let $\TT=\TT_{N^{+}, N^{-}}$, respectively  $\TT^{[p]}=\TT_{N^{+}, pN^{-}}$, be the $l$-adic Hecke algebra corresponding to the cusp forms of level $N=N^{+}N^{-}$, respectively of level $Np=N^{+}N^{-}p$, which is new at primes dividing $N^{-}$, respectively at primes dividing $pN^{-}$. Since $f$ is an eigenform, we have a morphism
$\phi_{f}: \TT\rightarrow \calO$
corresponding to the system of eigenvalues of $f$. More precisely, we have  $\phi_{f}(T_{v})=a_{v}(f)$ for $v\nmid N$  and  $\phi_{f}(U_{v})=a_{v}(f)$ for $v\mid N$.

\begin{definition}\label{n-adm}
Let $n\geq 1$ be an integer. We say that a prime $p$ is \emph{$n$-admissible} for $f$ if 
\begin{enumerate}
\item $p\nmid Nl$;
\item $l\nmid p^{2}-1 $;
\item $\varpi^{n}\mid p+1-\epsilon_{p}(f)a_{p}(f)$ for some $\epsilon_{p}(f)\in\{-1, 1\}$. 
\end{enumerate}
\end{definition}

Let $\phi_{f, n}: \TT\rightarrow \calO_{n}$ be reduction of the map $\phi_{f}: \TT\rightarrow \calO$ modulo $\varpi^{n}$. We denote by $I_{f, n}$  the kernel of this map and $\mathfrak{m}_{f}$ the unique maximal ideal of $\TT$ containing $I_{f, n}$. We will say $\mathfrak{m}_{f}$ is residually irreducible if the residue Galois representation $\bar{\rho}_{f}$ attached to $f$  is irreducible. The following result is known as the \emph{ramified arithmetic level raising for Shimura curves} and was first proved in \cite[Theorem 5.15, Corollary 5.18]{BD-Main}. The proof of the theorem below is inspired by two lectures given by Liang Xiao at the Morningside center \cite{Xiao} on the subject.

\begin{theorem}\label{level-raise-curve}
Let $p$ be an $n$-admissible prime for $f$. We assume that 
\begin{enumerate}[label=(\roman*)]
\item the maximal ideal $\mathfrak{m}_{f}$ is residually irreducible;
\item The module $S^{B}_{2}(N^{+}, \calO)_{\mathfrak{m}_{f}}$ is free of rank $1$ over $\TT_{\mathfrak{m}_{f}}$. 
\end{enumerate}
Then we have the following.
\begin{enumerate}
\item There exists a surjective homomorphism $\phi^{[p]}_{f, n}: \TT^{[p]}\rightarrow \calO_{n}$ such that $\phi^{[p]}_{f, n}$ agrees with $\phi_{f, n}$ at all Hecke operators away from $p$ and sends $U_{p}$ to $\epsilon_{p}(f)$. We will denote by $I^{[p]}_{f, n}$ the kernel of $\phi^{[p]}_{f, n}$. 
\item We have an isomorphism of $\calO_{n}$-modules of rank $1$
\begin{equation*}
S^{B}_{2}(N^{+}, \calO){/I_{f, n}}\xrightarrow{\cong}\rmH^{1}_{\sing}(K, \rmH^{1}(X_{K^{\ac}}, \calO(1)){/I^{[p]}_{f, n}}).
\end{equation*}
\end{enumerate}
\end{theorem}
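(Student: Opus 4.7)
My plan is to prove part (2) by a direct computation of the singular quotient via the potential map in the one-dimensional case, and then to extract part (1) as a by-product by reading off the $\TT^{[p]}$-action on the resulting cokernel.

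First, apply the potential map formula \eqref{1-potential} to $\rmM = \rmH^{1}(X_{K^{\ac}}, \calO_n(1))$, which gives
\begin{equation*}
\rmH^{1}_{\sing}(K, \rmM) \cong \coker\bigl[\rmH^{0}(a_{0*}\calO_n) \xrightarrow{\rho} \rmH^{0}(a_{1*}\calO_n) \xrightarrow{\tau} \rmH^{2}(a_{0*}\calO_n(1))\bigr]^{G_{k}}.
\end{equation*}
Proposition \ref{curve-red} identifies the components $X^{(0)}_{k} \simeq X^B_+ \sqcup X^B_-$ (each piece canonically a copy of the Shimura set $X^B$) and the singular locus $X^{(1)}_{k} \simeq X^B_0(p)$. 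Under the tautological identification $\calO_n[X^B] = S^B_2(N^+, \calO_n)$, the three-term complex becomes
\begin{equation*}
S^B_2(N^+, \calO_n)^{\oplus 2} \xrightarrow{(\pi_+^*, -\pi_-^*)} S^B_2(pN^+, \calO_n) \xrightarrow{(\pi_{+,*},\, \pi_{-,*})} S^B_2(N^+, \calO_n)^{\oplus 2}.
\end{equation*}
Both families $X^B_{\pm}$ and the set $X^B_0(p)$ are defined over $k = \FF_{p^{2}}$, and the Tate twist kills the weight on $\rmH^{2}(\PP^{1}, \calO_n(1)) = \calO_n$; consequently the $G_k$-invariants in \eqref{1-potential} are taken on modules with trivial Galois action.

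Next, reinterpret $\tau\circ\rho$ Hecke-theoretically using the correspondence identities $\pi_{\pm,*}\pi_{\pm}^{*} = p+1$ and $\pi_{+,*}\pi_{-}^{*} = \pi_{-,*}\pi_{+}^{*} = T_{p}$ (the classical Hecke operator at $p$ on $X^B$, reflecting the fact that the two component families exchange under $\Matrix{0}{p}{1}{0}$). The composite is represented by the matrix
\begin{equation*}
\begin{pmatrix} p+1 & -T_{p} \\ T_{p} & -(p+1) \end{pmatrix}
\end{equation*}
acting on $S^B_2(N^+, \calO_n)^{\oplus 2}$. After localizing at $\mathfrak{m}_{f}$ and reducing modulo $I_{f,n}$, the Jacquet--Langlands transfer makes $T_{p}$ act as $a_{p}(f)$, and by $n$-admissibility $a_{p}(f) \equiv \epsilon_{p}(f)(p+1) \pmod{\varpi^{n}}$. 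Since $l\nmid p^{2}-1$, the element $p+1$ is a unit in $\calO_n$, so the matrix reduces to $(p+1)\bigl(\begin{smallmatrix} 1 & -\epsilon_{p}(f) \\ \epsilon_{p}(f) & -1 \end{smallmatrix}\bigr)$. Hypothesis (ii) makes $S^B_2(N^+, \calO)/I_{f,n}$ free of rank one over $\calO_n$, so the image is the rank-one submodule generated by $(1, \epsilon_{p}(f))$, and the cokernel is free of rank one over $\calO_n$, canonically identified with $S^B_2(N^+, \calO)/I_{f,n}$ via $(f_{+}, f_{-}) \mapsto \epsilon_{p}(f)\,f_{+} - f_{-}$. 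Hypothesis (i) is what permits the $\mathfrak{m}_{f}$-localization in the first place and ensures that the other graded pieces of the monodromy filtration on $\rmH^{1}(X_{K^{\ac}}, \calO(1))_{\mathfrak{m}_{f}}$, which give contributions to the finite part rather than the singular quotient, are absent.

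For part (1), the map $\phi^{[p]}_{f,n}$ arises by transport of structure: $\TT^{[p]}$ acts on $\rmH^{1}(X_{K^{\ac}}, \calO(1))$ compatibly with the weight filtration and hence on the singular quotient, which by the above argument is a rank-one $\calO_n$-module. The Hecke operators away from $p$ act on $\rmH^{1}$ by the usual Eichler--Shimura recipe and, through the identification with $S^B_2(N^+, \calO)/I_{f,n}$ above, match $\phi_{f,n}$. The value $\phi^{[p]}_{f,n}(U_{p}) = \epsilon_{p}(f)$ is then pinned down by how $U_{p}$ acts on the two component families via the Cerednik--Drinfeld picture and the above matrix calculation. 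The main obstacle will be the careful bookkeeping of signs and normalizations in the identification of $\rho, \tau$ with the correspondences $\pi_{\pm}^{*}, \pi_{\pm,*}$, of the Galois action over $\FF_{p^{2}}$ versus $\FF_{p}$ (where $\Frob$ swaps $X^B_+$ and $X^B_-$), and of the compatibility between the ideals $I_{f,n} \subset \TT$ and $I^{[p]}_{f,n} \subset \TT^{[p]}$; this last point is where the $n$-admissibility condition (3) enters in an essential way.
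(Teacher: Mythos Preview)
Your proposal is correct and follows essentially the same route as the paper: compute the singular quotient via the one-dimensional potential map, identify the three-term complex with quaternionic modular forms on $X^{B}$ and $X^{B}_{0}(p)$, and recognize the composite $\tau\circ\rho$ as the intersection matrix $\Matrix{-(p+1)}{T_{p}}{T_{p}}{-(p+1)}$ (your sign convention differs harmlessly), whose cokernel modulo $I_{f,n}$ is free of rank one by $n$-admissibility. For part (1) the paper is more explicit than your sketch: it records the $U_{p}$-action on $S^{B}_{2}(N^{+},\calO)^{\oplus 2}$ as $(x,y)\mapsto(-py,\,x+T_{p}y)$ and exhibits an automorphism $\delta$ with $\nabla\circ\delta=U_{p}^{2}-1$, so that the cokernel is identified with the quotient by $(I_{f,n},\,U_{p}-\epsilon_{p}(f))$, which pins down $\phi^{[p]}_{f,n}(U_{p})=\epsilon_{p}(f)$ directly.
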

\begin{proof}
In this proof we will set $\rmM=\rmH^{1}(X_{k^{\ac}}, \calO(1))$ and $\rmM_{n}=\rmH^{1}(X_{k^{\ac}}, \calO(1)){/I_{f, n}}$. Then we use the following formula as proved in \eqref{1-sing} and \eqref{1-potential}
\begin{equation}\label{sing-formula}
\rmH^{1}_{\sing}(K, \rmM)\cong \coker[\rmH^{0}(a_{0*}\calO)\xrightarrow{\rho} \rmH^{0}(a_{1*}\calO)\xrightarrow{\tau}  \rmH^{2}(a_{0*}\calO(1))]^{G_{k}}.
\end{equation}
Here 
$$\rmH^{0}(a_{0*}\calO)=\rmH^{0}(\PP^{1}(X^{B}_{+}), \calO)\oplus\rmH^{0}(\PP^{1}(X^{B}_{-}), \calO)$$
and we can identify it with the space
$S^{B}_{2}(N^{+},\calO)^{\oplus 2}$. Similarly under the Poincare duality, we can also identify  
\begin{equation*}
\rmH^{2}(a_{0*}\calO(1))=\rmH^{2}(\PP^{1}(X^{B}_{+}), \calO(1))\oplus\rmH^{2}(\PP^{1}(X^{B}_{-}), \calO(1))
\end{equation*}
with $S^{B}_{2}(N^{+}, \calO)^{\oplus 2}$. The space  $\rmH^{0}(a_{1*}\calO)$ can be identified with $S^{B}_{2}(pN^{+},\calO)$. Under these identifications, the composition appearing in \eqref{sing-formula}
\begin{equation*}
\rmH^{0}(a_{0*}\calO)\xrightarrow{\rho} \rmH^{0}(a_{1*}\calO) \xrightarrow{\tau} \rmH^{2}(a_{0*}\calO(1))
\end{equation*}
is given by the \emph{intersection matrix}
\begin{equation*}
\begin{pmatrix}
-(p+1) &T_{p}\\
T_{p} &-(p+1)\\
\end{pmatrix}
\end{equation*}
which we will also denote by $\nabla$. Since $p$ is $n$-admissible for $f$, the singular quotient of $\rmM_{n}$
\begin{equation*}
\rmH^{1}_{\sing}(K, \rmM_{n})\cong\coker[S^{B}_{2}(N^{+},\calO)^{\oplus 2}_{/I_{f, n}}\xrightarrow{\nabla}S^{B}_{2}(N^{+},\calO)^{\oplus 2}_{/I_{f, n})}]
\end{equation*}
is of rank one over $\calO_{n}$ and is isomorphic to $S^{B}_{2}(N^{+},\calO)_{/I_{f, n}}$. Note here the isomorphism between 
\begin{equation*}
\coker[S^{B}_{2}(N^{+},\calO)^{\oplus 2}_{/I_{f, n}}\xrightarrow{\nabla}S^{B}_{2}(N^{+},\calO)^{\oplus 2}_{/I_{f, n})}]
\end{equation*}
and $S^{B}_{2}(N^{+},\calO)^{\oplus 2}_{/I_{f, n}}$ is induced by the map
\begin{equation*}
S^{B}_{2}(N^{+},\calO)^{\oplus 2}_{/I_{f, n}}\rightarrow S^{B}_{2}(N^{+},\calO)_{/I_{f, n}}, \hphantom{aa} (x, y)\mapsto \frac{1}{2}(x+\epsilon_{p}(f)y)
\end{equation*}
This proves the second part of the theorem. 

By \cite[Theorem 5.8]{BD-Main} and \cite[\S3.5]{CH-2}, the natural $U_{p}$-action on 
\begin{equation*}
\rmH^{2}(a_{0*}\calO(1))\cong S^{B}_{2}(N^{+},\calO)^{\oplus 2}
\end{equation*}
is given by $(x, y)\mapsto (-py, x+T_{p}y)$.
We consider the automorphism $$\delta: S^{B}_{2}(N^{+},\calO)^{\oplus 2} \rightarrow S^{B}_{2}(N^{+},\calO)^{\oplus 2}$$ given by 
$(x, y)\mapsto (x+T_{p}y, y)$.
Then a quick calculation gives us that $\nabla\circ \delta=U^{2}_{p}-1$.  This means that the quotient 
\begin{equation*}
\frac{S^{B}_{2}(N^{+},\calO)^{\oplus 2}}{(I_{f, n},U^{2}_{p}-1)} \cong \coker[S^{B}_{2}(N^{+},\calO)^{\oplus 2}_{/I_{f, n}}\xrightarrow{\nabla}S^{B}_{2}(N^{+},\calO)^{\oplus 2}_{/I_{f, n})}]\end{equation*}
is of rank $1$. Since $p$ is $n$-admissible for $f$, we see immediately $U_{p}+\epsilon_{f}$ is invertible on $S^{B}_{2}(N^{+},\calO)^{\oplus 2}_{/I_{f, n}}$. Therefore we have
\begin{equation*}
\frac{S^{B}_{2}(N^{+},\calO)^{\oplus 2}}{(I_{f, n},U^{2}_{p}-1)} \cong \frac{S^{B}_{2}(N^{+},\calO)^{\oplus 2}}{(I_{f, n},U_{p}-\epsilon_{p}(f))}
\end{equation*}
and the latter quotient is of rank $1$ over $\calO_{n}$. Then the action of $\TT^{[p]}$ on this rank $1$ quotient gives the desired morphism 
$\phi^{[p]}_{f, n}: \TT^{[p]}\rightarrow \calO_{n}$.  

\end{proof}

We will now compare the ingredients in the above proof with those in \cite{BD-Main}.  Let $\mathcal{G}$ be the dual reduction graph of $X_{k}$. We denote by $\mathcal{V}(\mathcal{G})$ the set of vertices and $\mathcal{E}(\mathcal{G})$ the set of edges. Then we have the following identifications 
\begin{equation*}
\rmH^{0}(a_{0*}\Lambda)=\Lambda[\mathcal{V}(\mathcal{G})] 
\end{equation*}
and 
\begin{equation*}
\rmH^{0}(a_{1*}\Lambda)=\Lambda[\mathcal{E}(\mathcal{G})]
\end{equation*}
for $\Lambda$ equals to $\ZZ_{/l^{n}}$, $\ZZ_{l}$ or a finite extension of $\ZZ_{l}$.
Moreover under these identifications, the restriction map 
\begin{equation*}
\rmH^{0}(a_{0*}\Lambda)\xrightarrow{\rho} \rmH^{0}(a_{1*}\Lambda)
\end{equation*}
can be identified with 
\begin{equation*}
\Lambda[\mathcal{V}(\mathcal{G})]\xrightarrow{d^{*}=-s^{*}+t^{*}} \Lambda[\mathcal{E}(\mathcal{G})]. 
\end{equation*}
And the map  $$\rmH^{0}(a_{1*}\Lambda)\xrightarrow{\tau} \rmH^{2}(a_{0*}\Lambda(1))$$ can be identified with $$\Lambda[\mathcal{E}(\mathcal{G})]\xrightarrow{d_{*}=-s_{*}+t_{*}} \Lambda[\mathcal{V}(\mathcal{G})].$$ 
Here $s, t$ are the source and target maps 
\begin{equation*}
\mathcal{E}(\mathcal{G})\xrightarrow{s, t}\mathcal{V}(\mathcal{G})
\end{equation*}
defined in an obvious manner. Let $\mathcal{J}$ be the N\'{e}ron model of the Jacobian $\mathrm{Jac}(X_{K})$ of $X_{K}$ and let $\Phi$ be the group of connected components of special fiber $\mathcal{J}_{k}$. Denote by $\calX$, resp. $\calX^{\vee}$, the \emph{character group}, resp. the \emph{cocharacter group} of $\mathcal{J}_{k}$. The following proposition describes $\Phi$,  $\calX$ and $\calX^{\vee}$  in terms of the dual reduction graph $\mathcal{G}$.
\begin{proposition} \label{component-grp}
We have the following statements
\begin{enumerate}
\item There is a Hecke module isomorphism 
$$\ZZ[\mathcal{V}(\mathcal{G})]_{0}/d_{*}d^{*} \cong \Phi$$
where $\ZZ[\mathcal{V}(\mathcal{G})]_{0}$ be the image of $\ZZ[\mathcal{E}(\mathcal{G})]\xrightarrow{d_{*}}\ZZ[\mathcal{V}(\mathcal{G})]$. 

\item The kernel of the map $$\ZZ[\mathcal{V}(\mathcal{G})]\xrightarrow{d^{*}}\ZZ[\mathcal{E}(\mathcal{G})]$$ is Eisenstein. 

\item The cokernel of the map $$\ZZ[\mathcal{E}(\mathcal{G})]\xrightarrow{d_{*}}\ZZ[\mathcal{V}(\mathcal{G})]$$ is Eisenstein. 

\item There is an isomorphism of Hecke modules  $$\calX= \ker[\ZZ[\mathcal{E}(\mathcal{G})]\xrightarrow{d_{*}}\ZZ[\mathcal{V}(\mathcal{G})]].$$

\item There is an isomorphism of Hecke modules $$\calX^{\vee}= \coker[\ZZ[\mathcal{V}(\mathcal{G})]\xrightarrow{d^{*}}\ZZ[\mathcal{E}(\mathcal{G})]].$$

\end{enumerate}
\end{proposition}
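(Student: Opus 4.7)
The plan is to derive all five statements from the standard theory of N\'{e}ron models of Jacobians of curves with semistable reduction (Raynaud--Grothendieck, SGA 7), combined with the explicit description of the special fiber $X_k$ afforded by Proposition \ref{curve-red}.  First I would recall that for a proper semistable curve over $\calO_K$, the identity component $\mathcal{J}_k^0$ of the N\'{e}ron model of $\mathrm{Jac}(X_K)$ is an extension
\begin{equation*}
1 \to T \to \mathcal{J}_k^0 \to A \to 0,
\end{equation*}
where $A=\prod_v \mathrm{Jac}(\widetilde{D}_v)$ is the product of Jacobians of the normalizations of the irreducible components, and the character group of $T$ is naturally isomorphic to $\rmH_1(\mathcal{G},\ZZ)$.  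By Proposition \ref{curve-red} every component of $X_k$ is a copy of $\PP^1$, so $A$ vanishes and $\mathcal{J}_k^0$ is a pure torus.  Statements (4) and (5) then follow at once once we identify $\rmH_1(\mathcal{G},\ZZ)=\ker[d_*]$ and $\rmH^1(\mathcal{G},\ZZ)=\coker[d^*]$ from the simplicial chain/cochain complex of $\mathcal{G}$.

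For statement (1), I would invoke Grothendieck's monodromy formula for the component group: there is a canonical exact sequence
\begin{equation*}
0\to \calX \xrightarrow{u} \mathrm{Hom}(\calX^{\vee},\ZZ)\to \Phi\to 0,
\end{equation*}
where $u$ is induced by the monodromy (Picard--Lefschetz) pairing.  In the purely toric setting above, this pairing is the graph Laplacian, so $u$ factors as the inclusion $\ker[d_*]\hookrightarrow \ZZ[\mathcal{E}(\mathcal{G})]$ followed by $d_*$, with image landing in $\ZZ[\mathcal{V}(\mathcal{G})]_0=\im[d_*]$.  Dualising (or equivalently reading the sequence on the cocharacter side) then yields the isomorphism
\begin{equation*}
\Phi \;\cong\; \ZZ[\mathcal{V}(\mathcal{G})]_0/d_*d^*
\end{equation*}
asserted in (1).

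The Eisenstein assertions (2) and (3) are combinatorial.  The kernel of $d^*$ consists of pairs of functions on $X^B_+\sqcup X^B_-$ that agree along every edge of $\mathcal{G}$; by connectedness of $\mathcal{G}$ (which follows from strong approximation on $G'$, giving a geometrically connected Shimura curve $X_K$), this kernel is a single copy of $\ZZ$.  The action of the Hecke operator $T_q$ for $q\nmid Nlp$ on this $\ZZ$ is through the degree of the associated correspondence, namely $q+1$, and similarly $U_q$ acts as $q$ or $1$ according to the usual Eisenstein recipe, which is precisely the Eisenstein eigensystem.  The cokernel of $d_*$ is treated dually: it equals the free module on connected components of $\mathcal{G}$, again a copy of $\ZZ$ on which the Hecke algebra acts through the degree.

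The main obstacle in this plan is the careful identification of the geometric monodromy pairing, arising from the nearby-cycles / vanishing-cycles formalism recalled in \S2, with the combinatorial Laplacian $d_*d^*$ on the graph $\mathcal{G}$.  Although this is well-documented for Mumford curves and Jacobians in the literature (see e.g.\ the arguments leading to the Picard--Lefschetz formula in \eqref{picard-lef}), in our setting one must track orientations of the edges coming from the Cerednick--Drinfeld uniformization so that the source/target maps $s,t\colon \mathcal{E}(\mathcal{G})\to \mathcal{V}(\mathcal{G})$ are compatible with the specialization maps $\pi_{\pm}\colon X^B_0(p)\to X^B_{\pm}$.  Once these compatibilities are pinned down, the remaining verifications are routine.
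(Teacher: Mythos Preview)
Your proposal is correct and follows essentially the same route as the paper. The paper's own proof is extremely terse: it cites \cite[\S 9.6, Theorem~1]{BLR1} for (1) and Ribet's Ihara lemma \cite[Theorem~3.15]{Ri-100} for (2), leaving (3)--(5) implicit. Your SGA~7/Raynaud description of $\mathcal{J}_k^0$ and Grothendieck's monodromy sequence are exactly the content packaged in the BLR reference, and your argument for (2) and (3) via connectedness of the dual graph (strong approximation for the indefinite group $G'$) is precisely what ``Ihara's lemma for definite quaternion algebras'' amounts to here: the kernel of $d^*$ and the cokernel of $d_*$ are each a single copy of $\ZZ$ on which the Hecke algebra acts through the degree. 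The orientation/compatibility check you flag as the main obstacle is indeed the only point requiring care, and it is handled by the Picard--Lefschetz identification \eqref{picard-lef} already set up in \S 2.
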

\begin{proof}
The first statement follows from \cite[\S9.6, Theorem 1]{BLR1}. The second statement is the well known ``Ihara's lemma" for definite quaternion algebras in view of what we have discussed before. See \cite[Theorem 3.15]{Ri-100} for the version we need. 
\end{proof}
\begin{corollary}[{\cite[Corollary 5.18]{BD-Main}}]
Under the assumptions in Theorem \ref{level-raise-curve}, we have a more canonical isomorphism 
\begin{equation*}
\Phi_{\calO}/I^{[p]}_{f, n} \cong \rmH^{1}_{\sing}(K, \rmH^{1}(X_{K^{\ac}}, \calO(1))/I^{[p]}_{f, n}).
\end{equation*}
\end{corollary}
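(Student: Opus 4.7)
The plan is to combine the graph-theoretic formula for the singular quotient derived in the proof of Theorem~\ref{level-raise-curve} with the identification of $\Phi_\calO$ given in Proposition~\ref{component-grp}, invoking Ihara's lemma at the end to discard an Eisenstein discrepancy. This both recovers the rank-one computation of Theorem~\ref{level-raise-curve} and upgrades it to a canonical isomorphism which does not depend on the somewhat artificial identification $S^B_2(N^+,\calO)^{\oplus 2}_{/I_{f, n}}\cong S^B_2(N^+,\calO)_{/I_{f, n}}$ used in that proof.

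First I would rewrite the formula \eqref{1-potential} purely in terms of the dual reduction graph $\mathcal{G}$ of $X_k$. Under the identifications $\rmH^0(a_{0*}\calO)=\calO[\mathcal{V}(\mathcal{G})]$, $\rmH^0(a_{1*}\calO)=\calO[\mathcal{E}(\mathcal{G})]$, and $\rmH^2(a_{0*}\calO(1))=\calO[\mathcal{V}(\mathcal{G})]$ (the last via Poincar\'e duality on each $\PP^1$-component), together with $\rho=d^*$ and $\tau=d_*$, and using that $G_k$ acts trivially on these terms since the irreducible components of $X_k$ are defined over $k=\FF_{p^2}$, the formula \eqref{1-potential} becomes
\begin{equation*}
\rmH^1_{\sing}(K,\rmM)\;\cong\;\coker\bigl[\calO[\mathcal{V}(\mathcal{G})]\xrightarrow{d_*d^*}\calO[\mathcal{V}(\mathcal{G})]\bigr],
\end{equation*}
where $\rmM=\rmH^1(X_{K^{\ac}},\calO(1))$, and this identification is Hecke-equivariant for the $\TT^{[p]}$-action.

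Next I would insert this into the tautological short exact sequence
\begin{equation*}
0\longrightarrow \calO[\mathcal{V}(\mathcal{G})]_0/(d_*d^*)\longrightarrow \calO[\mathcal{V}(\mathcal{G})]/(d_*d^*)\calO[\mathcal{V}(\mathcal{G})]\longrightarrow \coker(d_*)\longrightarrow 0,
\end{equation*}
in which the leftmost term is $\Phi_\calO$ by Proposition~\ref{component-grp}(1) and the middle term is $\rmH^1_{\sing}(K,\rmM)$ by the previous step. To conclude I would localize at the maximal ideal $\frakm^{[p]}_f$ of $\TT^{[p]}$: by Proposition~\ref{component-grp}(3) the cokernel $\coker(d_*)$ is Eisenstein, so it vanishes after localization at the (non-Eisenstein) residually irreducible ideal $\frakm^{[p]}_f$, and the outer terms of the sequence become isomorphic. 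Passing to the quotient by $I^{[p]}_{f,n}\subset \frakm^{[p]}_f$ then yields the desired canonical isomorphism
\begin{equation*}
\Phi_\calO/I^{[p]}_{f,n}\xrightarrow{\sim}\rmH^1_{\sing}\bigl(K,\rmH^1(X_{K^{\ac}},\calO(1))/I^{[p]}_{f,n}\bigr).
\end{equation*}

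The main substantive input is Ihara's lemma as packaged in Proposition~\ref{component-grp}(3); the rest is combinatorial. The only real subtlety to verify is the Hecke compatibility of all the identifications, and in particular that the $U_p$-action on $\Phi_\calO$ coming from the N\'eron model matches the one induced on $\coker(d_*d^*)$ via the Poincar\'e pairing. This is exactly the content of the identity $d_*d^*=U_p^2-1$ observed in the proof of Theorem~\ref{level-raise-curve}, which ensures that modding out by $I^{[p]}_{f,n}$ — that is, imposing $U_p=\epsilon_p(f)$ — produces matching rank-one quotients on both sides.
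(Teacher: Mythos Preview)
Your proposal is correct and follows essentially the same line as the paper's own proof: both identify the singular quotient with $\coker(d_*d^*)$ via the graph-theoretic description of $\rho$ and $\tau$, then invoke Proposition~\ref{component-grp}(1) to recognise $\Phi_\calO$ inside this cokernel. You are in fact more explicit than the paper, which simply writes ``From the discussion before and Proposition~\ref{component-grp}(1)'' and leaves implicit the step you spell out with the short exact sequence $0\to\Phi_\calO\to\coker(d_*d^*)\to\coker(d_*)\to0$ and the appeal to Proposition~\ref{component-grp}(3) to kill the Eisenstein cokernel after localisation at $\frakm^{[p]}_f$.

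One small inaccuracy in your final paragraph: the identity established in the proof of Theorem~\ref{level-raise-curve} is $\nabla\circ\delta=U_p^2-1$ for a certain automorphism $\delta$, not $d_*d^*=U_p^2-1$ directly. This does not affect your main argument, since the Hecke compatibility you need is already guaranteed by the Hecke-equivariance of the identifications $\rho=d^*$, $\tau=d_*$ and of the weight spectral sequence, but the remark as written is slightly off.
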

\begin{proof}
Recall the following formula coming out of the weight spectral sequence for Shimura curves
\begin{equation*}
\rmH^{1}_{\sing}(K, \rmH^{1}(X_{K^{\ac}}, \calO(1))/I^{[p]}_{f, n})\cong \coker[\rmH^{0}(a_{0*}\calO)/I^{[p]}_{f, n}\xrightarrow{\rho} \rmH^{0}(a_{1*}\calO)/I^{[p]}_{f, n}\xrightarrow{\tau}  \rmH^{2}(a_{0*}\calO(1))/I^{[p]}_{f, n}]^{G_{k}}.
\end{equation*}
From the discussion before and Proposition \ref{component-grp} $(1)$, we see that the right-hand side of the above equation can be identified with
$\Phi_{\calO}/I^{[p]}_{f,n}$.  
\end{proof}

\begin{lemma}\label{comp-char}
There is an isomorphism under the assumption in Theorem \ref{level-raise-curve}
\begin{equation*}
\calX^{\vee}_{\calO}/I^{[p]}_{f, n}\cong \Phi_{\calO}/I^{[p]}_{f, n}\cong S^{B}_{2}(N^{+}, \calO)/I_{f, n}.
\end{equation*}
\end{lemma}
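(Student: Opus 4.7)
The plan is to treat the two claimed isomorphisms separately.

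The second isomorphism $\Phi_{\calO}/I^{[p]}_{f,n} \cong S^{B}_{2}(N^{+}, \calO)/I_{f,n}$ is already in hand: the corollary proved just above yields $\Phi_{\calO}/I^{[p]}_{f,n} \cong \rmH^{1}_{\sing}(K, \rmH^{1}(X_{K^{\ac}}, \calO(1))/I^{[p]}_{f,n})$, and Theorem \ref{level-raise-curve}(2) identifies the right-hand side with $S^{B}_{2}(N^{+}, \calO)/I_{f,n}$. In particular $\Phi_{\calO}/I^{[p]}_{f,n}$ is a free $\calO_{n}$-module of rank one, so the only new content is the first isomorphism.

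For the first isomorphism $\calX^{\vee}_{\calO}/I^{[p]}_{f,n} \cong \Phi_{\calO}/I^{[p]}_{f,n}$, the approach is to invoke the standard three-term exact sequence attached to the semistable Jacobian $\mathcal{J}$,
\begin{equation*}
0 \to \calX \to \calX^{\vee} \to \Phi \to 0,
\end{equation*}
in which the first map is induced by the monodromy pairing. In the graph-theoretic description of Proposition \ref{component-grp}, this sequence realizes itself tautologically as the composite $\calX = \ker(d_{*}) \hookrightarrow \ZZ[\mathcal{E}(\mathcal{G})] \twoheadrightarrow \calX^{\vee}$, whose cokernel, computed through $d_{*}$, equals $\im(d_{*})/\im(d_{*} d^{*}) = \Phi$. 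Reducing mod $I^{[p]}_{f,n}$ gives a natural surjection $\calX^{\vee}_{\calO}/I^{[p]}_{f,n} \twoheadrightarrow \Phi_{\calO}/I^{[p]}_{f,n}$, and the lemma reduces to showing that this surjection is an isomorphism.

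Since the target is a free $\calO_{n}$-module of rank one, it suffices to show $\calX^{\vee}_{\calO}/I^{[p]}_{f,n}$ has $\calO_{n}$-rank at most one. From the description $\calX^{\vee} = \coker[d^{*}: S^{B}_{2}(N^{+}, \calO)^{\oplus 2} \to S^{B}_{2}(pN^{+}, \calO)]$ together with Ihara's lemma (Proposition \ref{component-grp}(2), which makes $d^{*}$ injective on the residually irreducible part), the localization of $\calX^{\vee}_{\calO}$ at the maximal ideal containing $I^{[p]}_{f,n}$ is governed by the $p$-new subspace of $S^{B}_{2}(pN^{+}, \calO)$. Combining Jacquet--Langlands multiplicity one with the freeness hypothesis $S^{B}_{2}(N^{+}, \calO)_{\frakm_{f}} \cong \TT_{\frakm_{f}}$ and the $n$-admissibility relation $\varpi^{n} \mid p+1 - \epsilon_{p}(f) a_{p}(f)$ (which singles out the $U_{p}$-eigenvalue $\epsilon_{p}(f)$), one deduces that $\calX^{\vee}_{\calO}/I^{[p]}_{f,n}$ is indeed free of rank one over $\calO_{n}$. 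A surjection between two free rank-one $\calO_{n}$-modules is then forced to be an isomorphism.

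The main obstacle is this final rank-one statement for $\calX^{\vee}_{\calO}/I^{[p]}_{f,n}$, which is a form of Mazur's \emph{multiplicity one} at the auxiliary prime $p$. Establishing it cleanly uses both the residual irreducibility of $\frakm_{f}$ (to strip away Eisenstein contributions from $\ker d^{*}$ and $\coker d_{*}$) and the freeness of $S^{B}_{2}(N^{+}, \calO)_{\frakm_{f}}$ over $\TT_{\frakm_{f}}$ --- precisely the two hypotheses imposed in Theorem \ref{level-raise-curve}.
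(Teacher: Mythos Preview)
Your overall structure matches the paper's: you invoke the monodromy exact sequence $0\to\calX_{\calO}\to\calX^{\vee}_{\calO}\to\Phi_{\calO}\to 0$, reduce modulo $I^{[p]}_{f,n}$ to get a surjection $\calX^{\vee}_{\calO}/I^{[p]}_{f,n}\twoheadrightarrow\Phi_{\calO}/I^{[p]}_{f,n}$, and note that the target is free of rank one, so everything comes down to showing $\calX^{\vee}_{\calO}/I^{[p]}_{f,n}$ has $\calO_{n}$-rank one. The second isomorphism is handled identically.

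Where you diverge is in the justification of that rank-one statement. The paper does \emph{not} go through the $p$-new quotient description of $\calX^{\vee}$. Instead it cites \cite[Theorem 5.17]{BD-Main}: under the stated hypotheses the Tate module $\mathrm{Ta}_{l}(J)/I^{[p]}_{f,n}$ is free of rank~$2$ over $\calO_{n}$, and then the $p$-adic (Mumford) uniformization of $J$, which exhibits $\mathrm{Ta}_{l}(J)$ as an extension with graded pieces $\calX_{\calO}(1)$ and $\calX^{\vee}_{\calO}$, forces $\calX^{\vee}_{\calO}/I^{[p]}_{f,n}$ to have rank~$1$. This is short because the multiplicity-one work has already been done in \cite{BD-Main}.

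Your route via $\calX^{\vee}_{\calO}\cong\coker(d^{*})=(p\text{-new quotient of }S^{B}_{2}(pN^{+},\calO))$ is correct as an identification, but the sentence ``combining Jacquet--Langlands multiplicity one with the freeness hypothesis \ldots\ one deduces rank one'' is where the gap sits. Automorphic multiplicity one is a characteristic-zero statement; it does not by itself control the $\calO_{n}$-rank of the $p$-new quotient modulo $I^{[p]}_{f,n}$. What you actually need is an integral multiplicity-one statement at level $pN^{+}$ (equivalently, for the Shimura curve $X$), and that does not follow formally from the rank-one hypothesis at level $N^{+}$; it is precisely the content of \cite[Theorem 5.17]{BD-Main} that you would be reproving. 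So your argument is salvageable, but as written it asserts the hard step rather than proving it. The paper's approach avoids this by importing the needed input directly from Bertolini--Darmon.
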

\begin{proof}
We have an exact sequence 
\begin{equation*}
0\rightarrow \calX_{\calO} \rightarrow \calX^{\vee}_{\calO} \rightarrow \Phi_{\calO}\rightarrow 0
\end{equation*}
by \cite[5.11]{BD-Main} which induces
\begin{equation*}
\calX_{\calO}/I^{[p]}_{f, n} \rightarrow \calX^{\vee}_{\calO}/I^{[p]}_{f, n} \rightarrow \Phi_{\calO}/I^{[p]}_{f, n}\rightarrow 0.
\end{equation*}
Let $J$ be the Jacobian of the curve $X$. By the proof of \cite[Theorem 5.17]{BD-Main},  the Galois module $\mathrm{Ta}_{l}(J)/I^{[p]}_{f, n}$ is of rank $2$ over $\calO_{n}$ and hence $\calX^{\vee}_{\calO}/I^{[p]}_{f, n}$ is of rank $1$ over $\calO_{n}$ by the $p$-adic uniformization of $J$. The result follows as $\Phi_{\calO}/I^{[p]}_{f, n}\cong S^{B}_{2}(N^{+}, \calO)/I_{f, n}$ is also rank $1$ under our assumption by the previous discussions. 
\end{proof}

\section{Arithmetic level raising for triple product of Shimura curves}    

\subsection{Semistable model of triple product of Shimura curves} Recall in the last section we have defined the Shimura curve $X=X^{B^{\prime}}_{N^{+}, pN^{-}}$ with discriminant $pN^{-}$ over $\QQ$. Let $X^{3}$ be the threefold fiber product of $X$ over $\QQ$. Recall that we have the integral model $\mathfrak{X}$ of $X$ defined over $\calO_{K}$. Let $\mathfrak{X}^{3}$ be the threefold fiber product of $\mathfrak{X}$ over $\calO_{K}$.  First, we analyze the reduction of  $\mathfrak{X}^{3}$. We denote by $X^{3}_{k}$ the special fiber of  $\mathfrak{X}^{3}$. By Proposition \ref{curve-red}, we know each $X_{k}$ can be described as the union $\PP^{1}(X^{B}_{+})\cup \PP^{1}(X^{B}_{-})$ and therefore the special fiber $X^{3}_{k}$ can be described by the cube given below.
\begin{equation*}
\begin{tikzpicture}
  \matrix (m) [matrix of math nodes, row sep=2em,
    column sep=2em]{
     &  {X}^{045}_{[+-+]} & &  {X}^{024}_{[+++]}\\
         {X}^{015}_{[--+]}& &    {X}^{012}_{[-++]} & \\
    &  {X}^{345}_{[+--]}& &  {X}^{234}_{[++-]} \\
        {X}^{135}_{[---]} & &     {X}^{123}_{[-+-]} & \\}  ;
     \path
    (m-1-2) edge (m-1-4) edge (m-2-1) edge (m-3-2) 
    (m-1-4) edge (m-3-4) edge (m-2-3) 
    (m-2-1) edge [-,line width=6pt,draw=white] (m-2-3) edge (m-2-3) edge (m-4-1) 
    (m-3-2) edge  (m-3-4) edge (m-4-1) 
    (m-4-1) edge (m-4-3) 
    (m-3-4) edge (m-4-3) 
    (m-2-3) edge [-,line width=6pt,draw=white] (m-4-3) edge (m-4-3) ;
\end{tikzpicture}
\end{equation*}
We will explain the meaning of the simplices in this cube.
\begin{itemize}
\item The $0$-simplices are the vertices of the cube. They correspond to $3$-dimensional strata in $X^{3}_{k}$. Consider the vertex labeled by $X^{123}_{[-+-]}$ for example. The superscript $123$ has no real meaning and is simply used for ordering the vertices. This ordering is inherited from Liu's paper \cite{Liu-cubic} where the labels have real meanings in terms of achimedean places of a cubic field. The subscript $[-+-]$ means that $X^{123}_{[-+-]}$ is of the form
\begin{equation*}
\PP^{1}(X^{B}_{-})\times \PP^{1}(X^{B}_{+})\times \PP^{1}(X^{B}_{-}). 
\end{equation*}
\item The $1$-simplices are the edges of the cube. They correspond to $2$-dimensional strata in $X^{3}_{k}$. For example, we will label the edge between  ${X}^{135}_{[---]}$ and ${X}^{123}_{[-+-]}$ by $X^{1235}_{[-\pm-]}$. This means we will take the union on the superscript and take the intersection on the subscript. Then $X^{1235}_{[-\pm-]}$ is of the form
\begin{equation*}
\PP^{1}(X^{B}_{-})\times X^{B}_{0}(p) \times \PP^{1}(X^{B}_{-}).
\end{equation*}
\item The $2$-simplices are the faces of the cube. They correspond to $1$-dimensional strata in  $X^{3}_{k}$. We use similar convention as in the last point. For example
\begin{equation*}
X^{01235}_{[-\pm\pm]}=\PP^{1}(X^{B}_{-})\times X^{B}_{0}(p) \times X^{B}_{0}(p).
\end{equation*}
\item Finally, the $3$-simplex is the zero dimensional components given by 
\begin{equation*}
X^{012345}_{[\pm\pm\pm]}=X^{B}_{0}(p)\times X^{B}_{0}(p) \times X^{B}_{0}(p).
\end{equation*}
\end{itemize}
We will usually drop the subscript from the notations and put it back there when we need to know the exact form of the strata. By an easy computation on the local rings, we see that $\mathfrak{X}^{3}$ is not semi-stable. Following the procedure in \cite[Example 6.15]{GS95}, we can obtain a semistable model denoted by $\calY$  of $\mathfrak{X}^{3}$ over $\calO_{K}$. More precisely, to obtain $\calY$, we blow-up $\mathfrak{X}^{3}$ along the closed subscheme $X^{135}$, then we blow-up the strict transform of $X^{024}$. We denote by $\pi: \calY\rightarrow \mathfrak{X}^{3}$ the natural morphism between these two schemes given by the aforementioned process. The generic fiber $\calY_{K}$ agree with the generic fiber ${X}^{3}_{K}$ of $\mathfrak{X}^{3}$. The special fiber of $\calY$ will be denoted by $Y_{k}$ and it can be described by the following cube. The densely dotted line on the cube correspond to the new intersections between three dimensional strata caused by the blow-ups and they give new two dimensional strata. 
\begin{equation}\label{primitive-cube}
\begin{tikzpicture}
  \matrix (m) [matrix of math nodes, row sep=2em,
    column sep=2em]{
     &  {Y}^{045}_{[+-+]} \vphantom{f^{*}}  & &   {Y}^{024}_{[+++]}\hphantom{EEE}\\
      {Y}^{015}_{[--+]} \vphantom{f^{*}}& &         {Y}^{012}_{[-++]} \hphantom{Ee}& \\
    & {Y}^{345}_{[+--]} \vphantom{U}  & &       {Y}^{234}_{[++-]} \vphantom{E}\\
       {Y}^{135}_{[---]}  \vphantom{M} & &    {Y}^{123}_{[-+-]} \vphantom{N} & \\}  ;
     \path
    (m-1-2) edge (m-1-4) edge (m-2-1) edge (m-3-2) edge[densely dotted](m-4-1) 
    (m-1-4) edge (m-3-4) edge (m-2-3) edge[densely dotted] (m-3-2) edge[densely dotted](m-4-1)  edge[densely dotted] (m-2-1) edge[densely dotted] (m-4-3)
    (m-2-1) edge [-,line width=6pt,draw=white] (m-2-3) edge (m-2-3) edge (m-4-1) 
    (m-3-2) edge  (m-3-4) edge (m-4-1) 
    (m-4-1) edge (m-4-3) 
    (m-3-4) edge (m-4-3) edge[densely dotted](m-4-1)
    (m-2-3) edge [-,line width=6pt,draw=white] (m-4-3) edge (m-4-3) edge[densely dotted](m-4-1) ;
\end{tikzpicture}
\end{equation}
We will explain the meaning of some of the simplices of this cube. 
\begin{itemize}
\item The $0$-simplices are the vertices of the cube. They correspond to $3$-dimensional strata in ${Y}_{k}$. These are the strict transform of the corresponding strata in $X^{3}_{k}$. For example, $Y^{012}$ is the strict transform of $X^{012}$ under $\pi$. 

\item The $1$-simplices are the edges on the cube. They correspond to $2$-dimensional strata in ${Y}_{k}$. Notice that there are three types of edges: 
\begin{enumerate}
\item those correspond to the original edges in the cube for $X^{3}_{k}$, for example $Y^{0125}=Y^{012}\cap Y^{015}$; 
\item those correspond to the faces in the  cube for $X^{3}_{k}$, for example $Y^{01235}=Y^{012}\cap Y^{135}$; 
\item the one correspond to the ``main diagonal" of the cube $Y^{012345}=Y^{024}\cap Y^{135}$.
\end{enumerate}
\end{itemize}

\begin{proposition} \label{Yk}
We have the following descriptions of the $2$-dimensional and $3$-dimensional strata appear in the above list.
\begin{enumerate}
\item The three dimensional strata 
\begin{equation*}
Y^{i(i+1)(i+2)}
\end{equation*}
is the blow-up of $X^{i(i+1)(i+2)}$ along the one dimensional strata $X^{i(i+1)(i+2)(i+3)(i+5)}$ for $i\in \{0,1, 2, 3, 4, 5\}$.
\item The three dimensional strata 
\begin{equation*}
Y^{024}
\end{equation*}
is the blow-up of $X^{024}$ along the zero dimensional strata $X^{012345}$ followed by the blow-up of the strict transform of $X^{01234}\cup X^{01245}\cup X^{02345}$.

\item The three dimensional strata 
\begin{equation*}
Y^{135}
\end{equation*}
is the blow-up of $X^{135}$ along the zero dimensional strata $X^{012345}$ followed by the blow-up of the strict transform of $X^{01235}\cup X^{01345}\cup X^{12345}$.

\item The two dimensional strata 
\begin{equation*}
Y^{i(i+1)(i+2)(i+3)}
\end{equation*}
maps isomorphically to $X^{i(i+1)(i+2)(i+3)}$ for $i\in\{0, 1, 2, 3, 4, 5\}$.

\item The two dimensional strata 
\begin{equation*}
Y^{i(i+1)(i+2)(i+4)}
\end{equation*}
is the blow-up of $X^{i(i+1)(i+2)(i+4)}$ along $X^{012345}$ for $i\in\{0, 1, 2, 3, 4, 5\}$. 

\item The two dimensional strata 
\begin{equation*}
Y^{i(i+1)(i+2)(i+3)(i+5)}
\end{equation*}
is a $\PP^{1}$-bundle over $X^{i(i+1)(i+2)(i+3)(i+5)}$ for $i\in\{0, 1, 2, 3, 4, 5\}$. In fact, $Y^{i(i+1)(i+2)(i+3)(i+5)}$ is the exceptional divisor of the blow-up 
$\pi: Y^{i(i+1)(i+2)}\rightarrow X^{i(i+1)(i+2)}$.

\item The two dimensional strata 
\begin{equation*}
Y^{012345}
\end{equation*}
is a $\PP^{2}$-bundle over $X^{012345}$.
\end{enumerate}
Moreover all the strata of dimension $2$ or dimension $3$ are given in the above list. 
\end{proposition}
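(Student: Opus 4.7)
The plan is to perform an \'etale-local analysis of the two successive blow-ups $\pi : \calY \to \mathfrak{X}^3$, organized by the number $r \in \{0,1,2,3\}$ of factors at which the chosen point sits at a node of $\mathfrak{X}_k$. By the Cerednik--Drinfeld uniformization an $r$-node point admits an \'etale-local model
\begin{equation*}
\Spec \calO_K[x_1,y_1,\ldots,x_r,y_r,t_{r+1},\ldots,t_3]/(x_iy_i-\pi : 1\le i\le r),
\end{equation*}
in which the germs of $X^{135}$ and $X^{024}$ are cut out by $(x_1,\ldots,x_r)$ and $(y_1,\ldots,y_r)$ respectively, and actually appear only when the remaining smooth factors sit on the correct reduction components. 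For $r\le 1$ both ideals are Cartier, so both blow-ups are local isomorphisms; for $r=2$ the ambient has a conifold singularity and exactly one of the two ideals meets the point, its blow-up realizing the small resolution; for $r=3$ both ideals meet the point, and a direct Rees-algebra computation on each of the three charts of the first blow-up shows that it leaves an isolated $A_1$-singularity which is then resolved by the second blow-up along the strict transform of $(y_1,y_2,y_3)$.

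With this local picture in hand, the strata listed in the proposition are identified by tracking how the two centers meet each stratum of $X^3_k$. For the six non-extreme three-dimensional strata of item~(1), the intersection with $X^{135}$ inside $\mathfrak{X}^3$ is exactly the one-dimensional $X^{i(i+1)(i+2)(i+3)(i+5)}$, so the strict transform under the first blow-up is the stated blow-up; a chart-level check shows that the further intersection with the strict transform of $X^{024}$ is already a Cartier divisor inside this blown-up three-fold, so the second blow-up restricts to an isomorphism there. For the extreme strata $Y^{024}$ and $Y^{135}$ in items~(2) and~(3), each appears as the exceptional divisor of the corresponding blow-up, and the two-step blow-up description in the statement comes from the Proj-of-Rees-algebra description of $(x_1,\ldots,x_r)$ (resp.\ $(y_1,\ldots,y_r)$): at a point of $X^{012345}$ one gets a $\PP^2$-fiber and at a $2$-node point on an adjacent one-dimensional stratum one gets a $\PP^1$-fiber, matching a blow-up of $X^{135}$ (resp.\ $X^{024}$) first at $X^{012345}$ and then along the three adjacent one-dimensional strata.

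A parallel analysis handles the two-dimensional strata. The twelve original two-dimensional strata of $X^3_k$ split according to adjacency with $X^{024}$ or $X^{135}$: the six of item~(4), which are not adjacent to either extreme stratum, survive unchanged because both centers meet them in Cartier subdivisors; the six of item~(5), which are adjacent to exactly one extreme stratum, meet the corresponding center at the isolated point $X^{012345}$ and so become the stated blow-up. The remaining two-dimensional strata are exceptional: the six $\PP^1$-bundles of item~(6) arise from the one-dimensional centers blown up in the exceptional-divisor computation of items~(2)--(3), and the $\PP^2$-bundle of item~(7) is precisely the exceptional locus over $X^{012345}$. A count of $6+6+6+1=19$ matches the number of two-dimensional strata dictated by the semistability of $\calY$, ruling out any extra components. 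The principal obstacle lies in the case $r=3$: one must verify, by a careful Rees-algebra computation on each of the three charts of the first blow-up, that the strict transform of $X^{024}$ there is precisely the blow-up of $X^{024}$ at $X^{012345}$ (and nothing larger), that the remaining $A_1$-singularities are removed exactly by blowing up this strict transform, and that the chart-local exceptional pieces glue globally into the claimed $\PP^2$- and $\PP^1$-bundles rather than into several disconnected components.
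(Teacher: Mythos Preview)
Your approach is correct and is in fact more detailed than the paper's own proof, which consists of a single sentence referring the reader to \cite[Proposition B.39]{Liu-cubic} and observing that the underlying local models of the two situations coincide. Your \'etale-local analysis, stratified by the number $r$ of node factors and carried out via explicit Rees-algebra/chart computations, is precisely the kind of argument that reference contains; you have effectively reconstructed it. One small point of phrasing: calling $Y^{024}$ and $Y^{135}$ ``exceptional divisors'' of the respective blow-ups is slightly imprecise, since over the locus where $X^{135}$ (resp.\ the strict transform of $X^{024}$) is already Cartier the blow-up is an isomorphism and there is no genuine exceptional contribution; it would be cleaner to say that $Y^{135}$ is the proper transform of $X^{135}$, which coincides with the exceptional divisor over the non-Cartier locus, and likewise for $Y^{024}$. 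This does not affect the substance of your argument.
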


\begin{proof}
The proof of these statements are exactly the same as these given in \cite[Proposition B.39]{Liu-cubic}. Although we are working with different Shimura varities, the underlying local models are the same. 
\end{proof}

\subsection{Cohomology of blow-ups} We insert here a quick discussion of how to compute cohomology of varieties obtained by blow-ups, following \cite[\S 3]{Ito-p-uni}. Let $F$ be an algebraically closed field of any characteristic and let $X$ be a projective smooth irreducible variety over $F$ of dimension $n$. Let $Y_{1}, \cdots, Y_{r}\subset X$ be mutually disjoint smooth closed irreducible subvarieties of codimension $d\geq 2$. Let $i: Y=\sqcup^{r}_{t=1} Y_{t} \hookrightarrow X$ be the closed immersion. Let $\pi: \tilde{X}\rightarrow X$ be the blow-up of $X$ along $Y$ and $\tilde{Y}$ be the strict transform of $Y$. These maps will fit in the following cartesian diagram 
\begin{equation}
\begin{tikzcd}
\tilde{Y} \arrow[r, "\tilde{i}"] \arrow[d, "\pi_{\mid Y}"] & \tilde{X} \arrow[d, "\pi"] \\
Y \arrow[r, "i"]                & X    .          
\end{tikzcd}
\end{equation}
Since $Y$ is a disjoint union of mutually disjoint smooth irreducible closed subvarieties of codimension $d$, $\tilde{Y}$ is a $\PP^{d-1}$-bundle over $Y$. 
Let $\xi=c_{1}(\calO_{\tilde{Y}}(1))\in \rmH^{2}(\tilde{Y}, \Lambda(1))$ be the first Chern class of $\calO_{\tilde{Y}}(1)$ of the $\PP^{d-1}$-bundle $\tilde{Y}$. \begin{proposition}\label{bl-coho}
We have the following fomrula
\begin{equation*}
\begin{aligned}
\rmH^{k}(\tilde{X}, \Lambda(n))&\cong \rmH^{k-2}(Y, \Lambda(n-1))\oplus \cdots \oplus  \rmH^{k-2-2(d-2)}(Y, \Lambda(n-(d-2)))\xi^{d-2}
\oplus \rmH^{k}(X, \Lambda(n)). \\
\end{aligned}
\end{equation*}
\end{proposition}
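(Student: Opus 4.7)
The plan is to prove this via the standard blow-up formula in étale cohomology, which rests on two ingredients: the projective bundle formula applied to the exceptional divisor, and the excision long exact sequences compared via $\pi^{*}$.

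First, I would apply the projective bundle formula to the morphism $\pi_{|\tilde{Y}} \colon \tilde{Y} \to Y$, which is a Zariski-locally trivial $\PP^{d-1}$-bundle since $Y$ is smooth of codimension $d$ in $X$. This yields
\begin{equation*}
\rmH^{k}(\tilde{Y}, \Lambda(n-1)) \;\cong\; \bigoplus_{i=0}^{d-1} \rmH^{k-2i}(Y, \Lambda(n-1-i)) \cdot \xi^{i}.
\end{equation*}

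Second, since $\pi$ is an isomorphism over $U := X \setminus Y = \tilde{X} \setminus \tilde{Y}$, the excision (Gysin) long exact sequences for the closed pairs $(\tilde{X}, \tilde{Y})$ and $(X, Y)$ fit into a commutative ladder connected by pullback $\pi^{*}$. Absolute purity identifies $\rmH^{k}_{Y}(X, \Lambda(n)) \cong \rmH^{k-2d}(Y, \Lambda(n-d))$ and $\rmH^{k}_{\tilde{Y}}(\tilde{X}, \Lambda(n)) \cong \rmH^{k-2}(\tilde{Y}, \Lambda(n-1))$, the latter of which the first step expands as a sum of shifted cohomologies of $Y$. The plan is then to define a map
\begin{equation*}
\Phi \colon \rmH^{k}(X, \Lambda(n)) \;\oplus\; \bigoplus_{i=1}^{d-1} \rmH^{k-2i}(Y, \Lambda(n-i)) \;\longrightarrow\; \rmH^{k}(\tilde{X}, \Lambda(n))
\end{equation*}
by $\alpha \mapsto \pi^{*}\alpha$ on the first summand and $\beta_{i} \mapsto \tilde{i}_{*}\bigl(\pi_{|\tilde{Y}}^{*}\beta_{i} \cdot \xi^{i-1}\bigr)$ on the $i$-th Gysin summand, and to show $\Phi$ is an isomorphism.

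For surjectivity and injectivity, I would chase the ladder of excision sequences: the cokernel of $\pi^{*}\colon \rmH^{k}(X) \to \rmH^{k}(\tilde{X})$ is controlled by the difference between $\rmH^{k-2}(\tilde{Y}, \Lambda(n-1))$ and $\rmH^{k-2d}(Y, \Lambda(n-d))$ (embedded via the Gysin map of $\tilde{Y} \hookrightarrow \tilde{X}$ and of $Y \hookrightarrow X$ respectively). Using the projection formula $\pi_{*}\tilde{i}_{*}(\pi_{|\tilde{Y}}^{*}\beta \cdot \xi^{d-1}) = i_{*}\beta$ together with the relation $\xi^{d-1} + c_{1}(\calN)\cdot\xi^{d-2} + \cdots = 0$ coming from the top Chern class of the normal bundle of $Y$ in $X$, one identifies exactly the classes $\xi^{0}, \ldots, \xi^{d-2}$ as a basis complementary to the image of $\pi_{|\tilde{Y}}^{*}$-direction that is hit by the Gysin map from $(X, Y)$. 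This is the standard splitting argument (compare \cite[\S 3]{Ito-p-uni} or SGA5).

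The main technical point—and the step I would expect to spend the most care on—is verifying that in comparing the two Gysin sequences, precisely the $i=0$ summand of the projective bundle decomposition of $\rmH^{*}(\tilde{Y})$ cancels against the contribution from $\rmH^{*-2d}(Y)$ in the source, leaving the Gysin summands for $i = 1, \ldots, d-1$ as a genuine direct complement to $\pi^{*}\rmH^{k}(X,\Lambda(n))$ inside $\rmH^{k}(\tilde{X}, \Lambda(n))$. Once that cancellation is established via the projection formula and the Euler-class relation above, the five-lemma applied to the comparison of excision sequences completes the proof.
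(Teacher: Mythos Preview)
Your proposal is correct and is exactly the standard argument; the paper itself does not give a proof but simply cites SGA5 and \cite[(3.3)]{Ito-p-uni}, and the proof you sketch (projective bundle formula on the exceptional divisor, comparison of the two excision/Gysin sequences via $\pi^{*}$, and the cancellation of the top power of $\xi$ against $i_{*}$ via the projection formula and the Euler-class relation) is precisely what one finds in those references. So there is nothing to add.
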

\begin{proof}
This is a quite well-known results in \cite{sga5} and one can find a nice exposition of this result in \cite[(3.3)]{Ito-p-uni}. 
\end{proof}

\subsection{Level raising on triple product of Shimura curves } As in \S 2.1, $N$ is a positive integer which admits a factorization $N=N^{+}N^{-}$ with $(N^{+}, N^{-})=1$ such that $N^{-}$ is square free and has odd number of prime factors. Let
\begin{equation*}
\begin{aligned}
&f_{1}=\sum_{n\geq 1} a_{n}(f_{1})q^{n},\\
&f_{2}=\sum_{n\geq 1} a_{n}(f_{2})q^{n},\\
&f_{3}=\sum_{n\geq 1} a_{n}(f_{3})q^{n} \\
\end{aligned}
\end{equation*}
be a triple of weight $2$ normalized newforms in $ S^{\new}_{2}(\Gamma_{0}(N))$. We write $\triplef=(f_{1}, f_{2}, f_{3})$.
 For each $i=1, 2, 3$, let $E_{i}=\QQ(f_{i})$ be the Hecke field of $f_{i}$. Let $\calO_{i}$ be a finite extension of $\Lambda$ containing the ring of integers of $E_{i}$ with uniformizer $\varpi_{i}$. We will write $\FF_{i}$ as the residue field of $\calO_{i}$. Let $p$ be a prime away from $N$. Recall that $\TT=\TT_{N^{+}, N^{-}}$, respectively  $\TT^{[p]}=\TT_{N^{+}, N^{-}p}$, is the $l$-adic  Hecke algebra corresponding to the subspace of the cusp forms of level $N=N^{+}N^{-}$, respectively of level $Np=N^{+}N^{-}p$, which are new at primes dividing $N^{-}$, respectively at $pN^{-}$. Since $(f_{1}, f_{2}, f_{3})$ is a triple of eigenforms, we have morphisms 
$\phi_{i}: \TT\rightarrow \calO_{i}$ corresponding to $f_{i}$ and $\phi_{i, n}: \TT\rightarrow \calO_{i, n}$ corresponding to the reduction of $\phi_{i}$ modulo $\varpi^{n}_{i}$ for $i=1, 2, 3$. 
\begin{definition}
Let $n\geq 1$ be an integer. We say that a prime $p$ is \emph{$n$-admissible} for $\triplef=(f_{1}, f_{2}, f_{3})$ if 
\begin{enumerate}
\item $p\nmid Nl$;
\item $l\nmid p^{2}-1$;
\item $\varpi_{i}^{n}\mid p+1-\epsilon_{p,i}a_{p}(f_{i})$ with $\epsilon_{p,i}=\pm1$ for $i=1, 2, 3$;
\item $\epsilon_{p,1}\epsilon_{p,2}\epsilon_{p,3}=1$. 
\end{enumerate}
\end{definition}

Let $p$ be an \emph{$n$-admissible} prime for $\triplef$. We know  by Theorem \ref{level-raise-curve} that there are morphisms 
$\phi^{[p]}_{i, n}: \TT^{[p]}\rightarrow \calO_{i, n}$ that agree with $\phi_{i, n}: \TT\rightarrow \calO_{i, n}$
at all Hecke operators except those at $p$ and such that $\phi^{[p]}_{i, n}(U_{p})=\epsilon_{p, i}$ for $i=1,2,3$. We denote by $I_{{i}, n}$, respectively $I^{[p]}_{{i}, n}$, the kernel of the map $\phi_{i, n}$, respectively the kernel of the map $\phi^{[p]}_{i, n}$.  We also let $\mathfrak{m}_{i}$ be the maximal ideal of $\TT$ containing $I_{{i}, n}$ and let $\mathfrak{m}^{[p]}_{i}$ be the maximal ideal of $\TT^{[p]}$ containing $I^{[p]}_{{i}, n}$. We will always assume that the maximal ideal $\mathfrak{m}_{i}$ is residually irreducible. Let $\frakm_{\triplef}=(\frakm_{1}, \frakm_{2}, \frakm_{3})$ and $\frakm^{[p]}_{\triplef}=(\frakm^{[p]}_{1}, \frakm^{[p]}_{2}, \frakm^{[p]}_{3})$.

There  is an action of $\TT^{[p]}\times \TT^{[p]}\times  \TT^{[p]}$ on the $l$-adic cohomology of $\mathfrak{X}^{3}$ which extends to that of $\mathcal{Y}$. Let $\Lambda=\ZZ_{l}$ from now on. By the  K\"{u}nneth formula it makes sense to localize the cohomology 
\begin{equation*}
\rmH^{3}(\calY\otimes{K^{\ac}}, \Lambda(2))=\rmH^{3}(\mathfrak{X}^{3}\otimes{K^{\ac}}, \Lambda(2))
\end{equation*}
at the triple $\frakm_{\triplef^{[p]}}$. 
\begin{equation}\label{Kunneth}
\rmH^{3}(\mathfrak{X}^{3}\otimes{K^{\ac}}, \Lambda(2))_{\frakm_{\triplef^{[p]}}}=\otimes^{3}_{i=1}\rmH^{1}(X_{K^{\ac}}, \Lambda(1))_{\frakm^{[p]}_{i}}(-1). 
\end{equation}
This follows from the fact that the $\rmH^{0}$ and $\rmH^{2}$ of the Shimura curve $X_{K^{\ac}}$ are Eisenstein as Hecke modules and the K\"{u}nneth formula.  The same reasoning implies that $\rmH^{*}(\mathfrak{X}^{3}\otimes{K^{\ac}}, \Lambda(r))_{\frakm^{[p]}_{\triplef}}$ is non-zero only when $*=3$ for any integer $r$.  To analyze $\rmH^{3}(\mathfrak{X}^{3}\otimes{K^{\ac}}, \Lambda(2))$, we will localize its weight spectral sequence whose first page is given by \eqref{E1-primitive} at the triple $\frakm_{\triplef^{[p]}}$ and write the resulting spectral sequence by $\mathbb{E}(2)$. The ``untwisted" spectral sequence will be denoted by $\mathbb{E}$. 
\begin{lemma}
The $\rmE_{1}$-page of the spectral sequence $\mathbb{E}(2)$ is given below
\begin{equation*}\label{E1}
\begin{tikzpicture}[thick,scale=0.9, every node/.style={scale=0.6}]
\matrix (m) [matrix of math nodes,
    nodes in empty cells,nodes={minimum width=5ex,
    minimum height=5ex,outer sep=-5pt},
    column sep=1ex,row sep=1ex]{
              &      &     &     & \\
          6    &\rmH^{0}(a_{3*}\Lambda(-1))_{*} &  \rmH^{2}(a_{2*}\Lambda)_{*}&\rmH^{4}(a_{1*}\Lambda(1))_{*} & \rmH^{6}(a_{0*}\Lambda(2))_{*} \\
          4    &    &\rmH^{0}(a_{2*}\Lambda)_{*}  &\rmH^{2}(a_{1*}\Lambda(1))_{*}\oplus \rmH^{0}(a_{3*}\Lambda)_{*} &\rmH^{4}(a_{0*}\Lambda(2))_{*}\oplus\rmH^{2}(a_{2*}\Lambda(1))_{*}&\rmH^{4}(a_{1*}\Lambda(2))_{*}\\
          2    &       &  &\rmH^{0}(a_{1*}\Lambda(1))_{*} &\rmH^{2}(a_{0*}\Lambda(2))_{*}\oplus\rmH^{0}(a_{2*}\Lambda(1))_{*} &\rmH^{2}(a_{1*}\Lambda(2))_{*}\oplus\rmH^{0}(a_{3*}\Lambda(1))_{*}&\rmH^{2}(a_{2*}\Lambda(2))_{*}\\
          0   &        &                       &  &\rmH^{0}(a_{0*}\Lambda(2))_{*} &\rmH^{0}(a_{1*}\Lambda(2))_{*} & \rmH^{2}(a_{2*}\Lambda(2))&\rmH^{0}(a_{3*}\Lambda(2))_{*}&\\
         \quad\strut & -3  &  -2  &  -1  & 0 &1 &2 &3 & \strut \\};
\draw[thick] (m-1-1.east) -- (m-6-1.east) ;
\draw[thick] (m-6-1.north) -- (m-6-9.north) ;
\end{tikzpicture} 
\end{equation*}
where the subscript ${*}$ means localization at $\frakm_{\triplef^{[p]}}$. In particular it follows that $\mathbb{E}(2)$ will degenerate on its $\rmE_{2}$-page.
\end{lemma}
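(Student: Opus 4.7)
The guiding observation is that every stratum appearing in the semi-stable model $\calY$ of $\mathfrak{X}^3$ has \'etale cohomology concentrated in even degrees. Indeed, Proposition \ref{Yk} describes each $3$-dimensional stratum as either a triple product of $\PP^1$-bundles over $X^B$, or one of $Y^{024}, Y^{135}$, obtained by iteratively blowing up such a product along smooth closed subvarieties that are themselves $\PP^d$-bundles over products involving $X^B_0(p)$. The $2$-, $1$-, and $0$-dimensional strata listed there are built in the same fashion from the Shimura sets $X^B$ and $X^B_0(p)$. Since these sets are $0$-dimensional and finite \'etale, their \'etale cohomology is concentrated in degree $0$; the K\"unneth formula, the projective bundle formula, and the blow-up formula recalled in Proposition \ref{bl-coho} each preserve the property ``cohomology only in even degrees''. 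I therefore conclude that $\rmH^{j}(X^{(p)}_{k^{\ac}}, \Lambda) = 0$ for every odd $j$ and every simplicial index $p$.

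Feeding this vanishing into the explicit description $\rmE^{p,q}_{1} = \bigoplus_{i \geq \max(0,-p)} \rmH^{q-2i}(X^{(p+2i)}_{k^{\ac}}, \Lambda(-i))$ recalled in Section 2, I conclude $\rmE^{p,q}_{1} = 0$ whenever $q$ is odd. Applying the Tate twist $(2)$ to the general $\rmE_{1}$-page \eqref{E1-primitive} and discarding the now-zero rows $q = 1, 3, 5$ then yields precisely the displayed diagram; localization at $\frakm_{\triplef^{[p]}}$ is exact and commutes with all of the above constructions, so it preserves the positions of the nonzero entries (only the individual summands may shrink).

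For the degeneration, I analyze $d_{r} \colon \rmE^{p,q}_{r} \to \rmE^{p+r,q-r+1}_{r}$ for $r \geq 2$. When $r$ is even, the target row has odd $q$-coordinate and vanishes by the previous paragraph, so $d_{r} = 0$ automatically; in particular $d_{2} = 0$. When $r \geq 3$ is odd, I invoke Deligne's purity: each summand $\rmH^{q-2i}(X^{(p+2i)}_{k^{\ac}}, \Lambda(-i))$ is pure of Frobenius weight $(q-2i) - 2(-i) = q$ since $X^{(p+2i)}_{k^{\ac}}$ is smooth and proper over $k$, and so $\rmE^{p,q}_{r}$ is pure of weight $q$ as a subquotient of $\rmE^{p,q}_{1}$. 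A Galois-equivariant map from a weight-$q$ module into $\rmE^{p+r,q-r+1}_{r}$, which is pure of the strictly smaller weight $q - r + 1$, must be zero. Hence $d_{r} = 0$ for all $r \geq 2$ and $\mathbb{E}(2)$ degenerates at $\rmE_{2}$.

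The main obstacle I anticipate is propagating the Frobenius weight argument from $\QQ_{l}$-coefficients (where purity is textbook) to the integral setting $\Lambda = \calO$ and then to $\calO_{n}$. I plan to handle this by first establishing vanishing of $d_{r}$ after inverting $l$, and then using the $G_{k}$-equivariance of each $d_{r}$ together with the fact that the Frobenius eigenvalues on the $\rmE_{1}$-terms are algebraic integers of fixed absolute value to descend to the integral statement; equivalently, one may pass to the associated graded of the monodromy filtration, on which Frobenius acts semi-simply and the weight decomposition is honest integrally.
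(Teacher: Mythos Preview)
Your argument for the shape of the $\rmE_1$-page is exactly what the paper does: invoke the explicit stratification of Proposition~\ref{Yk} together with the blow-up formula of Proposition~\ref{bl-coho} to see that every stratum has cohomology concentrated in even degrees, so the odd rows of \eqref{E1-primitive} drop out. The paper compresses this into a single sentence, but the content is identical.

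On degeneration you actually go further than the paper, which simply asserts that vanishing of the odd rows forces $\rmE_2$-degeneration. You are right that parity alone kills only the even differentials $d_2, d_4, \ldots$; one still has to rule out $d_3$ and $d_5$, and your weight argument is the standard way to do this over $\QQ_l$. Your final paragraph, however, is the weakest part: neither of your two proposed descents to $\Lambda=\ZZ_l$ is quite solid as stated. Frobenius need not act semisimply on an integral module, so ``the weight decomposition is honest integrally'' is not literally true, and Frobenius eigenvalues on a torsion subquotient do not carry archimedean absolute values. A cleaner route is to observe first that every $\rmE_1^{p,q}$ here is a finitely generated \emph{free} $\Lambda$-module (each stratum is an iterated projective bundle or blow-up over a finite set, so its cohomology is torsion-free), whence $\rmE_1^{p,q}\hookrightarrow \rmE_1^{p,q}\otimes\QQ_l$, and then argue with the characteristic polynomial of Frobenius: since $\rmE_1^{p,q}$ is pure of weight $q$, the operator $P_{q'}(\Frob)$ (the Frobenius characteristic polynomial in weight $q'\neq q$) acts invertibly on $\rmE_1^{p,q}$ and hence on any subquotient, which is enough to force $d_r=0$ integrally for $r\geq 2$. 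This is the kind of sharpening your outline needs; the paper itself leaves this point implicit.
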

\begin{proof}
This follows from Proposition \ref{bl-coho} and the explicit descriptions of $X^{3}_{{k}}$ in \eqref{primitive-cube} which allow us to remove all the odd degree cohomology terms. 
  \end{proof}
  
\begin{lemma}
The spectral sequence $\mathbb{E}(2)$ satisfies Assumption \ref{assump-E}.
\end{lemma}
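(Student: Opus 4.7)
The plan is to prove items (1), (2), and (3) of Assumption \ref{assump-E} by a uniform weight--purity argument applied to the Rapoport--Zink weight spectral sequence. By Proposition \ref{Yk}, every stratum $Y^{(p)}_{k^{\ac}}$ of the semistable model $\calY$ is smooth and projective, being expressible as an iterated blow-up, a $\PP^{1}$-bundle, or a $\PP^{2}$-bundle over a product of Shimura sets and lower-dimensional Shimura strata. By Deligne's purity theorem, each summand $\rmH^{q-2i}(Y^{(p+2i)}_{k^{\ac}}, \Lambda(-i))$ of the Rapoport--Zink decomposition of $\rmE_{1}^{p,q}$ is pure of weight $(q-2i) + 2i = q$, so the twisted term $\rmE_{1}^{p,q}(r)$ is pure of weight $q-2r$. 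This purity is inherited by all subquotients of the spectral sequence and survives the Hecke-equivariant localization at $\frakm^{[p]}_{\triplef}$.

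For item (1), I would observe that the differential $d_{r}\colon \rmE_{r}^{p,q} \to \rmE_{r}^{p+r, q-r+1}$ for $r \geq 2$ is a $G_{k}$-equivariant map between pure Galois modules of distinct weights $q$ and $q-r+1$, and must therefore vanish. The vanishing of odd-$q$ rows after localization, recorded in the preceding lemma, already accounts for $d_{2}$, and the weight argument handles $d_{r}$ for $r \geq 3$. For items (2) and (3), I would simply list the weights on the $p+q = 3$ diagonal: in $\mathbb{E}(2)$ they are $q-4 \in \{2, 0, -2, -4\}$ as $q$ runs over $\{6, 4, 2, 0\}$, while in $\mathbb{E}(1) = \mathbb{E}(2)(-1)$ they are $q-2 \in \{4, 2, 0, -2\}$. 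The unique weight-zero entry in each case is $\rmE_{2}^{-1, 4}(2)$ and $\rmE_{2}^{1, 2}(1)$ respectively, matching the conclusions of (2) and (3).

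What remains is the verification that a pure Galois module of weight $w \neq 0$, localized at $\frakm^{[p]}_{\triplef}$, admits no nontrivial $G_{k}$-invariant subquotient. Tracing through the blow-up formula of Proposition \ref{bl-coho} and the explicit bundle structures of Proposition \ref{Yk}, the possible Frobenius eigenvalues on such a piece are products of $\pm 1$ (arising from the $\Gal(k/\FF_{p})$-swap of $\PP^{1}(X^{B}_{\pm})$ and the Atkin--Lehner involution on $X^{B}_{0}(p)$) with integer powers of $p$ dictated by the weight. The admissibility hypothesis $l \nmid p^{2} - 1$, together with the residual irreducibility of each $\frakm_{i}$ and the $U_{p}$-congruence $U_{p} \equiv \epsilon_{p, i} \pmod{\varpi^{n}}$ cutting out $\frakm^{[p]}_{i}$, then rules out eigenvalue $1$ modulo $\varpi$ on every weight-nonzero piece. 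This last bookkeeping step is the main obstacle, since one must cross-reference which Atkin--Lehner eigenvalues are compatible with which $U_{p}$-eigenvalues, but each individual check reduces to the same Hecke-module calculation already carried out in the proof of Theorem \ref{level-raise-curve} for a single Shimura curve.
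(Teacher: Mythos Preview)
Your approach via Deligne purity captures the correct heuristic --- on the $p+q=3$ diagonal the twist-adjusted weights single out exactly the desired entries --- but it differs from the paper's proof and contains one confusion you should correct.

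The paper does not argue abstractly via purity. Instead it computes $\rmH^{2}(a_{1*}\Lambda(1))$ term by term from Proposition~\ref{Yk} and the K\"unneth formula, and observes that every summand is a tensor product of pieces of the form $\rmH^{0}(\PP^{1}(X^{B}_{\pm}),\Lambda)$, $\rmH^{0}(X^{B}_{0}(p),\Lambda)$, or $\rmH^{2}(\PP^{1}(X^{B}_{\pm}),\Lambda(1))\cong\Lambda$, on each of which $G_{k}$ acts \emph{trivially} (not merely with weight-zero eigenvalues). Once one knows Frobenius acts by the scalar $1$ on $\rmH^{2}(a_{1*}\Lambda(1))$ and on every $\rmH^{0}(a_{p*}\Lambda)$, the remaining $E_{1}$-terms on the diagonal are Tate twists of these, so Frobenius acts by an explicit power of $p^{2}$ and the hypothesis $l\nmid p^{2}-1$ disposes of parts~(2) and~(3). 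This direct calculation is precisely what your final ``mop-up'' paragraph promises but does not carry out; the paper simply does it first and never needs abstract purity as an intermediary. Your purity argument for $E_{2}$-degeneration is also valid only after such an integral check (a $G_{k}$-equivariant map between integral modules of distinct weights can be nonzero in general), so in the end both approaches rest on the same explicit eigenvalue computation.

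One point to fix: the $\pm 1$ signs you attribute to the swap $\PP^{1}(X^{B}_{+})\leftrightarrow\PP^{1}(X^{B}_{-})$ and the Atkin--Lehner involution come from the nontrivial element of $\Gal(k/\FF_{p})$, not from $G_{k}=\Gal(k^{\ac}/k)$. All strata are defined over $k=\FF_{p^{2}}$, so $G_{k}$ acts trivially on the component decomposition and no $\pm 1$ enters the $G_{k}$-eigenvalues. This actually simplifies your bookkeeping: there is no cross-referencing of Atkin--Lehner and $U_{p}$-eigenvalues to be done at this stage.
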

\begin{proof}
We need to analyze the cohomology term $\rmH^{2}(a_{1*}\Lambda(1))$. First we have 
\begin{equation}\label{a1}
\begin{aligned}
\rmH^{2}(a_{1*}\Lambda(1))= &\oplus^{5}_{i=0}\rmH^{2}(Y^{i(i+1)(i+2)(i+3)}_{k^{\ac}}, \Lambda(1))\\
                                                                       &\oplus\oplus^{5}_{i=0}\rmH^{2}(Y^{i(i+1)(i+2)(i+4)}_{k^{\ac}}, \Lambda(1))\\
                                                                       &\oplus\oplus^{5}_{i=0} \rmH^{2}(Y^{i(i+1)(i+2)(i+3)(i+5)}_{k^{\ac}}, \Lambda(1))\\
                                                                       &\oplus\rmH^{2}(Y^{012345}_{k^{\ac}}, \Lambda(1))\\
\end{aligned}
\end{equation}
by Proposition \ref{Yk}. 
We explicate the terms in the direct sum on the righthand side of \eqref{a1}. 
\begin{itemize}
\item For $\rmH^{2}(Y^{i(i+1)(i+2)(i+3)}_{k^{\ac}}, \Lambda(1))$, we only make it explicit for $i=0$. By Proposition \ref{Yk} $(4)$, we have
\begin{equation*}
\begin{aligned}
\rmH^{2}(Y^{0123}_{k^{\ac}}, \Lambda(1))&= \rmH^{2}(X^{0123}_{[-+\pm], k^{\ac}}, \Lambda(1))\\
                                                                                  &=\rmH^{2}(\PP^{1}(X^{B}_{-}), \Lambda(1))\otimes \rmH^{0}(\PP^{1}(X^{B}_{+}), \Lambda)\otimes \rmH^{0}(X^{B}_{0}(p), \Lambda)\\
                                                                                  &\oplus\rmH^{0}(\PP^{1}(X^{B}_{-}), \Lambda)\otimes \rmH^{2}(\PP^{1}(X^{B}_{+}), \Lambda(1))\otimes \rmH^{0}(X^{B}_{0}(p), \Lambda).\\
\end{aligned}
\end{equation*}

\item For $\rmH^{2}(Y^{i(i+1)(i+2)(i+4)}_{k^{\ac}}, \Lambda(1))$, we only make it explicit for $i=0$. By Proposition \ref{Yk} $(5)$, we have
\begin{equation*}
\begin{aligned}
\rmH^{2}(Y^{0124}_{k^{\ac}}, \Lambda(1))&= \rmH^{2}(X^{0124}_{[\pm++], k^{\ac}}, \Lambda(1))\oplus \rmH^{0}(X^{012345}_{k^{\ac}}, \Lambda)\\
                                                                                  &=\rmH^{0}(X^{B}_{0}(p)), \Lambda)\otimes \rmH^{2}(\PP^{1}(X^{B}_{+}), \Lambda(1))\otimes \rmH^{0}(\PP^{1}(X^{B}_{+}), \Lambda)\\
                                                                                   &\oplus\rmH^{0}(X^{B}_{0}(p)), \Lambda)\otimes \rmH^{0}(\PP^{1}(X^{B}_{+}), \Lambda)\otimes \rmH^{2}(\PP^{1}(X^{B}_{+}), \Lambda(1))\\
                                                                                  & \oplus \rmH^{0}(X^{012345}, \Lambda).\\
\end{aligned}
\end{equation*}

\item For $\rmH^{2}(Y^{i(i+1)(i+2)(i+3)(i+5)}_{k^{\ac}}, \Lambda(1))$, we again only make it explicit for $i=0$. By Proposition \ref{Yk} $(6)$, we have
\begin{equation*}
\begin{aligned}
\rmH^{2}(Y^{01235}_{k^{\ac}}, \Lambda(1))&= \rmH^{2}(\PP^{1}(X^{01235}_{[-\pm\pm], k^{\ac}}), \Lambda(1))\\ 
                                                                    &= \rmH^{0}(X^{01235}_{[-\pm\pm], k^{\ac}}), \Lambda) \\
                                                                    &=\rmH^{0}(\PP^{1}(X^{B}_{-}), \Lambda)\otimes \rmH^{0}(X^{B}_{0}(p), \Lambda)\otimes \rmH^{0}(X^{B}_{0}(p), \Lambda).\\
\end{aligned}
\end{equation*}

\item By Proposition \ref{Yk} $(7)$, we have
\begin{equation*}
\begin{aligned}
\rmH^{2}(Y^{012345}_{k^{\ac}}, \Lambda(1))&=\rmH^{0}(X^{012345}_{k^{\ac}}, \Lambda)\\
&=\rmH^{0}(X^{B}_{0}(p), \Lambda)\otimes\rmH^{0}(X^{B}_{0}(p), \Lambda)\otimes\rmH^{0}(X^{B}_{0}(p), \Lambda)
\end{aligned}
\end{equation*}
\end{itemize}
It follows from the above calculation that $G_{k}$ acts trivially on $\rmH^{2}(a_{1*}\Lambda(1))$. Using this we can immediately verify that Assumption \ref{assump-E} is satisfied for the spectral sequence $\mathbb{E}$. 
\end{proof}

The above lemma allows us to apply the exact sequence in \eqref{sing-exact} to calculate 
\begin{equation*}
\rmH^{1}_{\sing}(K, \rmH^{3}(X^{3}\otimes{K^{\ac}}, \Lambda(2))_{\frakm_{\triplef^{[p]}}}).
\end{equation*}
For $i=1, 2, 3$, we will set 
\begin{equation*}
\rmT^{[p]}_{i}=\rmH^{1}(X_{\QQ^{\ac}}, \calO_{i}(1))_{\mathfrak{m}^{[p]}_{i}}
\end{equation*}
and let $\rmT^{[p]}_{i, n}= \rmT^{[p]}_{i}/I^{[p]}_{i, n}$ be the natural quotient. 
Let 
\begin{equation*}
\rmM^{[p]}(\triplef)=\rmT^{[p]}_{1}\otimes \rmT^{[p]}_{2}\otimes \rmT^{[p]}_{3}
\end{equation*}
which is a $G_{\QQ}$-module over $\calO=\calO_{1}\otimes\calO_{2}\otimes\calO_{3}$ and let 
\begin{equation*}
\rmM^{[p]}_{n}(\triplef)=\rmT^{[p]}_{1, n}\otimes \rmT^{[p]}_{2, n}\otimes \rmT^{[p]}_{3, n}
\end{equation*} 
which is a $G_{\QQ}$-module over $\calO_{n}=\calO_{1, n}\otimes \calO_{2, n}\otimes \calO_{3, n}$. 

\begin{lemma}\label{3-fin}
Let $p$ be an $n$-admissible prime for $\triplef$. The module $\rmM^{[p]}_{n}(\triplef)$ is unramified viewed as an $\calO_{n}[G_{K}]$-module and is isomorphic to 
\begin{equation*}
\calO_{n}\oplus\calO^{\oplus 3}_{n}(1)\oplus\calO^{\oplus 3}_{n}(2) \oplus \calO_{n}(3).
\end{equation*}
The singular quotient $\rmH^{1}_{\sing}(K, \rmM^{[p]}_{n}(\triplef)(-1))$ is free of rank $3$ over $\calO_{n}$
\end{lemma}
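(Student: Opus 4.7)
The plan is to identify each $\rmT^{[p]}_{i,n}$ as a specific $G_K$-module via Carayol's theorem applied to the level-raised Hecke system, then form the triple tensor product, and finally apply the unramified-case formula for $\rmH^{1}_{\sing}$ from Section 2.

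I would begin by showing that $\rmT^{[p]}_{i,n}$ is free of rank two over $\calO_{i,n}$ and unramified as a $G_K$-module. Rank two follows from the rank-one freeness of $S^{B}_2(N^+,\calO)_{\frakm_i}$ together with the Cerednick--Drinfeld/Taylor--Wiles input already used in the proof of Theorem \ref{level-raise-curve}. By Eichler--Shimura and Chebotarev density, the trace of $\Frob_q$ on $\rmT^{[p]}_{i,n}$ at any $q\nmid Nlp$ equals $\phi^{[p]}_{i,n}(T_q)=\phi_{i,n}(T_q)=a_q(f_i)$, matching the trace on the (Tate-twisted) Galois representation attached to the original level-$N$ newform $f_i$. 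Residual absolute irreducibility of $\bar{\rho}_{f_i}$ then lets one apply Carayol's theorem on uniqueness of lifts with matching traces, promoting this to an isomorphism of $G_\QQ$-modules. Since $f_i$ has level $N$ coprime to $p$, the resulting representation is unramified at $p$, and hence so is $\rmT^{[p]}_{i,n}$.

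Next I would decompose $\rmT^{[p]}_{i,n}$ via Frobenius eigenspaces. The characteristic polynomial $X^2-a_p(f_i)X+p$ of $\Frob_p$ on $\rmV_{f_i}$ factors modulo $\varpi^n_i$ as $(X-\epsilon_{p,i})(X-\epsilon_{p,i}p)$ by the admissibility congruence $a_p(f_i)\equiv \epsilon_{p,i}(p+1)\pmod{\varpi^n_i}$. After absorbing the Tate twist $(1)$ in the definition of $\rmT^{[p]}_{i,n}$ and restricting to $G_K=G_{\QQ_{p^2}}$ (which squares the Frobenius), the two eigenvalues become $1$ and $p^2$; these are distinct modulo $\varpi_i$ thanks to $l\nmid p^2-1$, so a standard Hensel/idempotent argument produces a $G_K$-equivariant splitting
\[
\rmT^{[p]}_{i,n}\;\cong\;\calO_{i,n}\oplus \calO_{i,n}(1),
\]
matching the two Frobenius eigenlines. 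Tensoring over $i=1,2,3$ and expanding over subsets $S\subset\{1,2,3\}$ yields
\[
\rmM^{[p]}_n(\triplef)\;\cong\;\calO_n\oplus \calO_n(1)^{\oplus 3}\oplus \calO_n(2)^{\oplus 3}\oplus \calO_n(3),
\]
still unramified as a $G_K$-module. For the singular quotient, the formula of Section 2 specializes in the unramified case (where the monodromy $N$ vanishes) to $\rmH^{1}_{\sing}(K,M)\cong M(-1)^{G_{k}}$; applied to $M=\rmM^{[p]}_n(\triplef)(-1)$ this gives $\rmM^{[p]}_n(\triplef)(-2)^{G_k}$. On each summand $\calO_n(r)$ the Frobenius $\Frob_K$ acts as $p^{2r}$, and the admissibility condition $l\nmid p^2-1$ makes $p^{2r}-1$ a unit in $\calO_n$ for the relevant non-zero $r$, so only the three copies of $\calO_n$ coming from $|S|=1$ survive, giving a free $\calO_n$-module of rank $3$.

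The main obstacle is the identification in the first step: the level-raised Hecke system corresponds over the characteristic-zero field $E_{\lambda_i}$ to a Galois representation whose local structure at $p$ is Steinberg, but its reduction modulo $\varpi^n_i$ must collapse onto the unramified representation attached to $f_i$. Making this collapse rigorous relies crucially on residual absolute irreducibility in order to invoke Carayol's deformation-theoretic uniqueness; matching Hecke traces alone would not force such an isomorphism in a residually reducible setting.
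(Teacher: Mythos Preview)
Your proposal is correct and follows essentially the same route as the paper: both identify $\rmT^{[p]}_{i,n}$ with $\calO_{i,n}\oplus\calO_{i,n}(1)$ as a $G_K$-module (the paper via the forward reference to Lemma~\ref{p-lower}, which in turn cites \cite[Theorem~5.17]{BD-Main}; you via an explicit Carayol-type trace-matching argument), tensor to obtain the stated decomposition of $\rmM^{[p]}_n(\triplef)$, and then compute the singular quotient of an unramified module as $\Hom(\Lambda(1),M)^{G_k}\cong M(-1)^{G_k}$. Your write-up is simply more self-contained about the first step, spelling out the deformation-theoretic uniqueness and the Hensel splitting that the paper packages into its citation of Bertolini--Darmon.
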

\begin{proof}
The first part follows from the fact that $p$ is an $n$-admissible prime for $\triplef$. In particular, $(3)$ in the definition of $n$-admissible prime for $\triplef$ implies that $\rmT^{[p]}_{i, n}\cong \calO_{i, n}\oplus\calO_{i, n}(1)$ as an $\calO_{K}$-module. See also Lemma \ref{p-lower}. For the second part, we have 
\begin{equation*}
\begin{aligned}
\rmH^{1}_{\sing}(K, \rmM^{[p]}_{n}(\triplef)(-1)) &\cong\rmH^{1}(I_{K}, \rmM^{[p]}_{n}(\triplef)(-1))^{G_{k}} \\
                                                      &\cong\mathrm{Hom}(I_{K}, \rmM^{[p]}_{n}(\triplef)(-1))^{G_{k}} \\
                                                      &\cong\mathrm{Hom}(\Lambda(1), \rmM^{[p]}_{n}(\triplef)(-1))^{G_{k}} .\\
\end{aligned}
\end{equation*}
Then the result follows from the first part. 
\end{proof}
The following theorem is the \emph{ramified arithmetic level raising for the triple product of Shimura curves} and is one of the main results in this article. Recall that we denote by $\Phi$ the group of connected components of the special fiber of the N\'{e}ron model of the Jacobian of the Shimura curve $X_{K}$.
\begin{theorem}\label{arithmetic-level-raising}
Let  $p$ be an $n$-admissible prime for the triple $\triplef=(f_{1}, f_{2}, f_{3})$.  For $i=1, 2, 3$, assume that 
\begin{enumerate}
\item the maximal ideal $\frakm_{i}$ is residually irreducible;
\item each $S^{B}_{2}(N^{+}, \calO_{i})_{\frakm_{i}}$ is free of rank $1$ over $\TT_{\frakm_{i}}$.
\end{enumerate}
Then we have an isomorphism 
\begin{equation*}
 \rmH^{1}_{\sing}(K, \rmM^{[p]}_{n}(\triplef)(-1))\cong \oplus^{3}_{j=1}(\otimes^{3}_{i=1}S^{B}_{2}(N^{+}, \calO_{i}){/I_{i,n}}).
\end{equation*}
More canonically, we have the isomorphism
\begin{equation*}
\rmH^{1}_{\sing}(K, \rmM^{[p]}_{n}(\triplef)(-1))  \cong \oplus^{3}_{j=1}(\otimes^{3}_{i=1}\Phi_{\calO_{i}}/I^{[p]}_{i,n}).
\end{equation*} 
\end{theorem}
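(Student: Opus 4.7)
The plan is to compute the singular quotient geometrically via the potential map formula \eqref{potential} applied to the weight spectral sequence $\mathbb{E}(2)$ of the semistable model $\calY$, and then identify the resulting cokernel with three copies of a triple tensor of quaternionic modular forms by combining the K\"unneth decomposition of the strata with the one-dimensional arithmetic level raising theorem.

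The two lemmas proved just before the theorem establish that $\mathbb{E}(2)$ degenerates at its $\rmE_2$-page and satisfies Assumption~\ref{assump-E}, so that
\[
\rmH^{1}_{\sing}(K, \rmM^{[p]}_n(\triplef)(-1)) \cong \Bigl(\frac{\im[\rmH^{2}(a_{1*}\calO(1))\xrightarrow{\tau}\rmH^{4}(a_{0*}\calO(2))]_{\frakm^{[p]}_{\triplef}}}{\tau\cdot\im[\rmH^{2}(a_{0*}\calO(1))\xrightarrow{\rho}\rmH^{2}(a_{1*}\calO(1))]_{\frakm^{[p]}_{\triplef}}}\Bigr)^{G_{k}}.
\]
The first task is to evaluate the numerator and denominator stratum by stratum using Proposition~\ref{Yk}, which expresses each relevant stratum of $Y_k$ as either a product of $\PP^1(X^{B}_{\pm})$-bundles and copies of $X^{B}_{0}(p)$, or as a blow-up thereof. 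The K\"unneth formula then distributes each cohomology term across three factors carrying independent Hecke actions.

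Upon localization at $\frakm^{[p]}_{\triplef}$, most K\"unneth summands are Eisenstein and vanish, leaving only those summands in which each of the three Hecke factors matches its eigensystem $\phi^{[p]}_{i,n}$. A direct enumeration over the vertices and edges of the cube \eqref{primitive-cube}, including the extra two-dimensional strata $Y^{i(i+1)(i+2)(i+3)(i+5)}$ and $Y^{012345}$ created by the blow-ups, shows that the surviving contributions split into three decoupled ``diagonal directions'' indexed by $j\in\{1,2,3\}$. For each such direction, the map $\tau\circ\rho$ restricts to a tensor product of the $2\times 2$ intersection matrix $\bigl(\begin{smallmatrix}-(p+1)&T_p\\ T_p&-(p+1)\end{smallmatrix}\bigr)$ governing the one-dimensional case; quotienting by the ideals $I_{i,n}$ as in the proof of Theorem~\ref{level-raise-curve} collapses each matrix to a single copy of $S^{B}_{2}(N^{+},\calO_i)/I_{i,n}$. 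Tensoring the three factors in each direction and summing over $j$ yields the desired decomposition, whose rank $3$ matches the abstract computation of Lemma~\ref{3-fin}.

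For the canonical identification, we replace each $S^{B}_{2}(N^{+},\calO_i)/I_{i,n}$ by $\Phi_{\calO_i}/I^{[p]}_{i,n}$ via Lemma~\ref{comp-char} and the level-raising isomorphism of Theorem~\ref{level-raise-curve}, so
\[
\rmH^{1}_{\sing}(K,\rmM^{[p]}_n(\triplef)(-1))\cong \bigoplus_{j=1}^{3} \bigotimes_{i=1}^{3}\Phi_{\calO_i}/I^{[p]}_{i,n}.
\]
The main obstacle will be the combinatorial bookkeeping: one must verify that the exceptional-divisor contributions produced by the two blow-ups in $\pi\colon \calY\to \mathfrak{X}^{3}$ do not introduce new non-Eisenstein pieces that are missed by the analogy with the one-dimensional case, and that the three summands indexed by $j$ are genuinely decoupled rather than merely abstractly isomorphic. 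This requires carefully tracking the $\PP^1$-bundle structure on each $Y^{i(i+1)(i+2)(i+3)(i+5)}$ and the $\PP^2$-bundle structure on $Y^{012345}$ from Proposition~\ref{Yk} and checking that their contributions to both the numerator and denominator of the potential map cancel away from the three diagonal summands.
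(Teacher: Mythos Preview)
Your overall strategy matches the paper's: compute the singular quotient via the potential map $\nabla$ on the featuring cycles of $\calY$, then reduce factor-by-factor to the one-dimensional level-raising computation. However, your description of how the three summands arise contains a genuine inaccuracy that would cause the argument to stall.

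You assert that in each of the three ``diagonal directions'' the map $\tau\circ\rho$ restricts to a \emph{tensor product} of the $2\times 2$ intersection matrix $\bigl(\begin{smallmatrix}-(p+1)&T_p\\ T_p&-(p+1)\end{smallmatrix}\bigr)$, so that each factor collapses uniformly to $S^{B}_{2}(N^{+},\calO_i)/I_{i,n}$. This is not what happens. In the paper's computation (after first reducing $A_{2}$ and $A^{2}$ to the six surfaces $X^{i(i+1)(i+2)(i+3)(i+5)}$ together with $X^{012345}$, a nontrivial simplification you do not isolate), the map $\nabla$ on the pair $\{X^{01235},X^{02345}\}$ gives the intersection matrix \emph{only on the first factor}, producing $\Phi_{\calO_1}/I^{[p]}_{1,n}$ there, while the second and third factors remain $\rmH^{0}(X^{B}_{0}(p),\Lambda)/I^{[p]}_{\bullet,n}$. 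The reduction of those remaining factors comes from a \emph{different} mechanism: the cross-terms of $\nabla$ sending $X^{01345}\oplus X^{01234}$ into $X^{01235}$ act by the restriction map $\rmH^{0}(\PP^{1}(X^{B}_{\pm}))\to\rmH^{0}(X^{B}_{0}(p))$ on one factor and a surjective Gysin map on another, so the quotient on those factors is the \emph{cocharacter group} $\calX^{\vee}_{\calO_i}/I^{[p]}_{i,n}$, not $\Phi$. The intermediate answer is therefore $\Phi\otimes\calX^{\vee}\otimes\calX^{\vee}$ (and its two cyclic permutations); only after invoking Lemma~\ref{comp-char}, which uses rank-counting via $p$-adic uniformization of the Jacobian to show $\calX^{\vee}_{\calO_i}/I^{[p]}_{i,n}\cong\Phi_{\calO_i}/I^{[p]}_{i,n}$, does one obtain three copies of $\otimes_i\Phi_{\calO_i}/I^{[p]}_{i,n}$.

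So your proposal is missing two ingredients: the preliminary identification of $A_{2}$ and $A^{2}$ with the cohomology of the featuring cycles alone (which makes the combinatorics tractable), and the recognition that the three tensor factors in each summand arise through heterogeneous mechanisms---one via the component group, two via the cocharacter group---requiring the separate input of Lemma~\ref{comp-char} to be reconciled.
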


\subsection{Proof of the arithmetic level raising} To prove the preceding theorem, we need a different presentation of the potential map. Recall that we have the following exact sequence
\begin{equation*}
A_{2}(Y_{k}, \Lambda)^{0}_{\frakm^{[p]}_{\triplef}}\xrightarrow{\nabla}A^{2}(Y_{k}, \Lambda)^{0}_{\frakm^{[p]}_{\triplef}}\xrightarrow{\eta} \rmH^{1}_{\sing}(K, \rmH^{3}(X^{3}\otimes{K^{\ac}}, \Lambda(2))_{\frakm^{[p]}_{\triplef}})\rightarrow 0
\end{equation*}
where 
\begin{equation*}
\begin{aligned}
&A^{2}(Y_{k}, \Lambda)^{0}_{\frakm^{[p]}_{\triplef}}=\im[\rmH^{2}(a_{1*}\Lambda(1))_{\frakm^{[p]}_{\triplef}}\xrightarrow{\tau}\rmH^{4}(a_{0*}\Lambda(2))_{\frakm^{[p]}_{\triplef}}]^{G_{k}}\\
&A_{2}(Y_{k}, \Lambda)^{0}_{\frakm^{[p]}_{\triplef}}=\im[\rmH^{2}(a_{0*}\Lambda(1))_{\frakm^{[p]}_{\triplef}}\xrightarrow{\rho}\rmH^{2}(a_{1*}\Lambda(1))_{\frakm^{[p]}_{\triplef}}]^{G_{k}}.\\
\end{aligned}
\end{equation*}

\begin{lemma}
We have the following statements.
\begin{enumerate}
\item $A_{2}(Y_{k}, \Lambda)^{0}_{\frakm^{[p]}_{\triplef}}\cong\coker[\rmH^{0}(a_{1*}\Lambda)_{\frakm^{[p]}_{\triplef}}\xrightarrow{\tau}\rmH^{2}(a_{0*}\Lambda(1))_{\frakm^{[p]}_{\triplef}}]^{G_{k}}$.
\item $A^{2}(Y_{k}, \Lambda)^{0}_{\frakm^{[p]}_{\triplef}}\cong\ker[\rmH^{4}(a_{0*}\Lambda(2))_{\frakm^{[p]}_{\triplef}}\xrightarrow{\rho}\rmH^{4}(a_{1*}\Lambda(2))_{\frakm^{[p]}_{\triplef}}]^{G_{k}}$.
\item The map $\nabla$ is induced by the composite $\tau\circ\rho$ under the above isomorphisms.
\end{enumerate}
\end{lemma}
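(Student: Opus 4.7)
The plan is to analyze the weight spectral sequence for $\calY/\calO_{K}$ localized at $\frakm^{[p]}_{\triplef}$, invoking Assumption~\ref{assump-E} and the explicit stratification of $Y_{k}$ from Proposition~\ref{Yk}, combined with Poincar\'e duality on the smooth projective strata $Y^{(p)}_{k^{\ac}}$. The three statements identify the objects $A_{2}$ and $A^{2}$ that appear in the potential map exact sequence \eqref{sing-exact} with more tractable cokernel and kernel descriptions; statement (3) is then a formal consequence of (1) and (2).

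To establish (1), I would analyze the portion of the $E_{1}$-page of $\mathbb{E}(1)$ at row $q=2$ containing the three-term complex
\[
\rmH^{0}(a_{1*}\Lambda) \xrightarrow{\tau} \rmH^{2}(a_{0*}\Lambda(1)) \xrightarrow{\rho} \rmH^{2}(a_{1*}\Lambda(1)),
\]
which sits as a subcomplex of the full $d_{1}$-complex involving the auxiliary summands $\rmH^{0}(a_{2*}\Lambda)$, $\rmH^{0}(a_{3*}\Lambda)$, and $\rmH^{2}(a_{2*}\Lambda(1))$. After localization at $\frakm^{[p]}_{\triplef}$, the explicit formula \eqref{potential} for $(\rmE^{1,2}_{2}(1))^{G_{k}}$ together with Assumption~\ref{assump-E} shows that these auxiliary summands do not contribute to the $G_{k}$-invariant subquotient in the relevant position, forcing $(\ker\rho)^{G_{k}} = (\im\tau)^{G_{k}}$ on $\rmH^{2}(a_{0*}\Lambda(1))_{\frakm^{[p]}_{\triplef}}$. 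Statement (1) then follows from the first isomorphism theorem applied to $\rho$.

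For statement (2), I would invoke Poincar\'e duality on the smooth projective strata $Y^{(p)}_{k^{\ac}}$ of dimension $3-p$. The pairings identify $\rmH^{0}(a_{1*}\Lambda)$ with the dual of $\rmH^{4}(a_{1*}\Lambda(2))$ and $\rmH^{2}(a_{0*}\Lambda(1))$ with the dual of $\rmH^{4}(a_{0*}\Lambda(2))$; under these, the Gysin map $\tau\colon\rmH^{0}(a_{1*}\Lambda)\to\rmH^{2}(a_{0*}\Lambda(1))$ is adjoint to the restriction map $\rho\colon\rmH^{4}(a_{0*}\Lambda(2))\to\rmH^{4}(a_{1*}\Lambda(2))$. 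Hence $\coker\tau$ is Poincar\'e dual to $\ker\rho$, and this duality descends to $G_{k}$-invariants for the finite Galois modules in question, transporting statement (1) to statement (2).

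Statement (3) is formal. By construction, the potential map $\nabla\colon A_{2}\to A^{2}$ is induced by the Gysin map $\tau\colon\rmH^{2}(a_{1*}\Lambda(1))\to\rmH^{4}(a_{0*}\Lambda(2))$ restricted to $A_{2}=\im\rho$. Lifting a class in $A_{2}$ to $\rho(y)$ for some $y\in\rmH^{2}(a_{0*}\Lambda(1))$, we obtain $\nabla([\rho(y)]) = \tau(\rho(y)) = (\tau\circ\rho)(y)\in A^{2}$. Under the identifications in (1) and (2), this exhibits $\nabla$ as the map induced by the composite $\tau\circ\rho\colon\rmH^{2}(a_{0*}\Lambda(1))\to\rmH^{4}(a_{0*}\Lambda(2))$. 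The main obstacle of the proof is the bookkeeping in the second paragraph: the auxiliary summands $\rmH^{0}(a_{2*}\Lambda)_{\frakm^{[p]}_{\triplef}}$, $\rmH^{0}(a_{3*}\Lambda)_{\frakm^{[p]}_{\triplef}}$, and $\rmH^{2}(a_{2*}\Lambda(1))_{\frakm^{[p]}_{\triplef}}$ do not vanish after localization, so one must carefully verify via Assumption~\ref{assump-E} and the explicit form of $\rmE^{1,2}_{2}(1)$ that they do not obstruct the needed exactness after taking $G_{k}$-invariants.
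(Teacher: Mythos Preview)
Your argument for part (1) has a genuine gap. You correctly identify the obstacle --- the auxiliary summands $\rmH^{0}(a_{2*}\Lambda)$, $\rmH^{0}(a_{3*}\Lambda)$, $\rmH^{2}(a_{2*}\Lambda(1))$ in the full $d_{1}$-complex at row $q=2$ --- but the tools you propose to handle them are the wrong ones. Assumption~\ref{assump-E} concerns only the $G_{k}$-invariant subquotients of the $\rmE_{2}^{i,3-i}$ terms on the anti-diagonal $p+q=3$; it says nothing about exactness at position $(0,2)$. Likewise, the explicit formula \eqref{potential} describes the quotient $\rmE_{2}^{1,2}(1)/N\rmE_{2}^{-1,4}(2)$, which again gives no control over $\mathbb{E}_{2}^{0,2}$. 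So neither input forces $(\ker\rho)^{G_{k}}=(\im\tau)^{G_{k}}$ on $\rmH^{2}(a_{0*}\Lambda(1))_{\frakm^{[p]}_{\triplef}}$. (Note also that your claim that the three-term sequence is a \emph{subcomplex} of the full $d_{1}$-complex is not automatic: the relation $d_{1}d_{1}=0$ only yields $\rho\tau+\tau\rho=0$, not $\rho\tau=0$.)

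The missing input is the vanishing of $\rmH^{2}(X^{3}\otimes K^{\ac},\Lambda)_{\frakm^{[p]}_{\triplef}}$, which was established earlier via the K\"unneth formula and the residual irreducibility of $\frakm^{[p]}_{i}$ (the $\rmH^{0}$ and $\rmH^{2}$ of the Shimura curve are Eisenstein). Since the spectral sequence degenerates at $\rmE_{2}$, this forces $\mathbb{E}_{2}^{0,2}(1)=0$, i.e.\ the full $d_{1}$-complex is exact at position $(0,2)$; from this exactness one reads off $\ker\rho\cong\im\tau$ on $\rmH^{2}(a_{0*}\Lambda(1))_{\frakm^{[p]}_{\triplef}}$, and (1) follows. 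The paper proves (2) by the identical argument using the vanishing of $\rmH^{4}(X^{3}\otimes K^{\ac},\Lambda(2))_{\frakm^{[p]}_{\triplef}}$ to obtain $\mathbb{E}_{2}^{0,4}(2)=0$. Your Poincar\'e-duality route for (2) is a legitimate alternative once (1) is in hand, but as written it inherits the gap in (1). Your treatment of (3) is fine and matches the paper.
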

\begin{proof}
By the previous reasoning, $\rmH^{2}(X^{3}\otimes{K^{\ac}}, \Lambda(1))$ is zero as it is Eisenstein and therefore $\mathbb{E}^{0, 2}_{2}(1)$ is zero. It follows then that 
\begin{equation*}
\ker[\rmH^{2}(a_{0*}\Lambda(1))_{\frakm^{[p]}_{\triplef}}\xrightarrow{\rho}\rmH^{2}(a_{1*}\Lambda(1))_{\frakm^{[p]}_{\triplef}}]\cong \im[\rmH^{0}(a_{1*}\Lambda)_{\frakm^{[p]}_{\triplef}}\xrightarrow{\tau}\rmH^{2}(a_{0*}\Lambda(1))_{\frakm^{[p]}_{\triplef}}].
\end{equation*}
Then $(1)$ is clear. The proof of $(2)$ is completely the same using the fact that $\rmH^{4}(X^{3}\otimes{K^{\ac}}, \Lambda(2))$ and thus $\mathbb{E}^{0, 4}_{2}(2)$ vanish this time. The rest of the claims follow by constructions.
\end{proof}

\begin{lemma}\label{feature-cycle}
We have the following isomorphisms
\begin{enumerate}
\item $A_{2}(Y_{k}, \Lambda)^{0}_{\frakm^{[p]}_{\triplef}}\cong\oplus^{5}_{i=0}\rmH^{0}(X^{i(i+1)(i+2)(i+3)(i+5)}_{k}, \Lambda)_{\frakm^{[p]}_{\triplef}}\oplus \rmH^{0}(X^{012345}_{k}, \Lambda)_{\frakm^{[p]}_{\triplef}}$;
\item  $A^{2}(Y_{k}, \Lambda)^{0}_{\frakm^{[p]}_{\triplef}}\cong\oplus^{5}_{i=0}\rmH^{2}(X^{i(i+1)(i+2)(i+3)(i+5)}_{k}, \Lambda(1))_{\frakm^{[p]}_{\triplef}}\oplus \rmH^{0}(X^{012345}_{k}, \Lambda)_{\frakm^{[p]}_{\triplef}}$.
\end{enumerate}
\end{lemma}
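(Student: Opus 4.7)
The plan is to compute both sides explicitly by combining Proposition \ref{Yk} with the blow-up cohomology formula (Proposition \ref{bl-coho}), and then to isolate the non-Eisenstein part after localizing at $\frakm^{[p]}_{\triplef}$.

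First I would compute $\rmH^{4}(Y^{S}, \Lambda(2))$ for each of the eight three-dimensional strata $Y^{S}$ of $Y_{k}$. For the six ``simple'' strata $Y^{i(i+1)(i+2)}$ with $\{i,i+1,i+2\}\notin\{\{0,2,4\},\{1,3,5\}\}$, Proposition \ref{Yk}(1) realizes $Y^{i(i+1)(i+2)}$ as the blow-up of $X^{i(i+1)(i+2)}\cong \PP^{1}(X^{B}_{\epsilon_{1}})\times \PP^{1}(X^{B}_{\epsilon_{2}})\times \PP^{1}(X^{B}_{\epsilon_{3}})$ along the smooth codimension-two center $X^{i(i+1)(i+2)(i+3)(i+5)}$, so that Proposition \ref{bl-coho} gives
\begin{equation*}
\rmH^{4}(Y^{i(i+1)(i+2)}, \Lambda(2)) \cong \rmH^{4}(X^{i(i+1)(i+2)}, \Lambda(2)) \oplus \rmH^{2}(X^{i(i+1)(i+2)(i+3)(i+5)}, \Lambda(1)).
\end{equation*}
For the two ``double'' strata $Y^{024}$ and $Y^{135}$, I would apply the blow-up formula twice, first along the codimension-three locus $X^{012345}$ (producing two Chern-class summands $\rmH^{0}(X^{012345}, \Lambda)$ and $\rmH^{0}(X^{012345}, \Lambda)\xi$) and then along the strict transform of the three adjacent one-dimensional strata, which contributes additional $\rmH^{2}$ terms of one-dimensional strata. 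A parallel computation yields $\rmH^{2}(Y^{S}, \Lambda(1))$.

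Second, I would analyze the restriction map $\rho\colon \rmH^{4}(a_{0*}\Lambda(2))\to \rmH^{4}(a_{1*}\Lambda(2))$ and, by Poincar\'e duality on each $2$-dimensional stratum, the Gysin map $\tau\colon \rmH^{0}(a_{1*}\Lambda)\to \rmH^{2}(a_{0*}\Lambda(1))$. The ``bulk'' Kunneth summand $\rmH^{4}(X^{i(i+1)(i+2)},\Lambda(2))$ decomposes under Kunneth as a sum of tensor products of $\rmH^{2}(\PP^{1}(X^{B}_{\pm}),\Lambda(1))\cong S^{B}_{2}(N^{+}, \calO)$ in two slots and $\rmH^{0}$ in the third. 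Modelling the argument on Theorem \ref{level-raise-curve} applied to each tensor slot, I claim that after localization at $\frakm^{[p]}_{\triplef}$ the restriction map on these bulk pieces is injective with Eisenstein cokernel and hence contributes nothing to the kernel of $\rho$ (respectively, contributes the full image to the cokernel of $\tau$) once localized. The ``exceptional'' summand $\rmH^{2}(X^{i(i+1)(i+2)(i+3)(i+5)}, \Lambda(1))$ restricts to zero on every adjacent $2$-dim stratum other than the exceptional divisor $Y^{i(i+1)(i+2)(i+3)(i+5)}$ itself, and the further boundary into $3$-dim strata on the other side vanishes; hence these summands survive intact in the localized kernel of $\rho$.

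Third, combining the above with the observation that the two Chern-class summands from the double blow-ups for $Y^{024}$ and $Y^{135}$ collapse, modulo Eisenstein and the restriction from the strict-transform blow-up, to a single non-Eisenstein copy of $\rmH^{0}(X^{012345},\Lambda)$ gives the last summand in both (1) and (2). The dual argument on $\tau$ produces the $\rmH^{0}$ versions of the exceptional summands by Poincar\'e duality for the smooth projective $2$-dim strata $Y^{i(i+1)(i+2)(i+3)(i+5)}$, finishing the proof.

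The main obstacle will be the careful bookkeeping for the double blow-ups $Y^{024}$ and $Y^{135}$: one must verify that the two Chern-class pieces $\rmH^{0}(X^{012345},\Lambda)$ and $\rmH^{0}(X^{012345},\Lambda)\xi$ arising from the codimension-three step, together with the interaction with the subsequent blow-up along the three $1$-dim strata and with the $\PP^{2}$-bundle $Y^{012345}$, combine to yield \emph{exactly one} non-Eisenstein surviving copy of $\rmH^{0}(X^{012345},\Lambda)$ on each of the $A_{2}$ and $A^{2}$ sides, and that the image coming from $Y^{024}$ is canonically identified with the one coming from $Y^{135}$ via the gluing across $Y^{012345}$.
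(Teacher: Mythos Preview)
Your approach is essentially the paper's own, just carried out on the dual side: the paper proves (1) directly via the presentation $A_{2}\cong\coker[\tau\colon \rmH^{0}(a_{1*}\Lambda)\to\rmH^{2}(a_{0*}\Lambda(1))]$ from the preceding lemma and then invokes duality for (2), while you work with $A^{2}\cong\ker[\rho\colon \rmH^{4}(a_{0*}\Lambda(2))\to\rmH^{4}(a_{1*}\Lambda(2))]$ and dualize for (1). The combinatorics of cancelling the ``bulk'' K\"unneth pieces against the boundary of the edge/face strata is identical in both directions.

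One concrete slip: for the codimension-three blow-up of $X^{024}$ (or $X^{135}$) along $X^{012345}$, Proposition~\ref{bl-coho} with $d=3$ and $k=4$ gives the extra summands $\rmH^{2}(X^{012345},\Lambda(1))\oplus\rmH^{0}(X^{012345},\Lambda)\xi$, and since $X^{012345}$ is zero-dimensional the first of these vanishes. So each of $Y^{024}$ and $Y^{135}$ contributes \emph{one} copy of $\rmH^{0}(X^{012345},\Lambda)$ to $\rmH^{4}(a_{0*}\Lambda(2))$, not two; the single surviving copy in $A^{2}$ then comes from the anti-diagonal in these two copies under restriction to $\rmH^{4}(Y^{012345})\cong\rmH^{0}(X^{012345},\Lambda)$. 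Once you correct this count, your bookkeeping concern disappears and the argument goes through. (The paper's proof is itself terse at the analogous point, writing ``Then it is clear that\ldots'' without spelling out that each $\rmH^{0}(X^{i(i+1)(i+2)(i+3)(i+5)})$ appears twice in $\rmH^{2}(a_{0*}\Lambda(1))$---once from $Y^{i(i+1)(i+2)}$ and once from $Y^{024}$ or $Y^{135}$---and is cut down to one copy by the Gysin from $Y^{i(i+1)(i+2)(i+3)(i+5)}$.)
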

\begin{proof}
The second part follows from the first part by duality and we only prove the first part.  We have by Proposition \ref{Yk}
\begin{equation*}
\rmH^{2}(a_{0*}\Lambda(1))_{\frakm^{[p]}_{\triplef}}=\oplus^{5}_{i=0}\rmH^{2}(Y^{i(i+1)(i+2)}_{k^{\ac}}, \Lambda(1))_{\frakm^{[p]}_{\triplef}}\oplus \rmH^{2}(Y^{024}_{k^{\ac}}, \Lambda(1))_{\frakm^{[p]}_{\triplef}}\oplus \rmH^{2}(Y^{135}_{k^{\ac}}, \Lambda(1))_{\frakm^{[p]}_{\triplef}}.
\end{equation*}
By Proposition \ref{Yk} $(1)$ and Proposition \ref{bl-coho}, we have 
\begin{equation*}
\begin{aligned}
&\rmH^{2}(Y^{i(i+1)(i+2)}_{k^{\ac}}, \Lambda(1))=\rmH^{2}(X^{i(i+1)(i+2)}_{k^{\ac}}, \Lambda(1))\oplus \rmH^{0}(X^{i(i+1)(i+2)(i+3)(i+5)}_{k^{\ac}}, \Lambda);\\
&\rmH^{2}(Y^{024}_{k^{\ac}}, \Lambda(1))=\rmH^{2}(X^{024}_{k^{\ac}}, \Lambda(1))\oplus \rmH^{0}(X^{01234}_{k^{\ac}}, \Lambda) \oplus \rmH^{0}(X^{01245}_{k^{\ac}}, \Lambda) \oplus \rmH^{0}(X^{02345}_{k^{\ac}}, \Lambda)\oplus \rmH^{0}(X^{012345}_{k^{\ac}}, \Lambda);\\
&\rmH^{2}(Y^{135}_{k^{\ac}}, \Lambda(1))=\rmH^{2}(X^{135}_{k^{\ac}}, \Lambda(1))\oplus \rmH^{0}(X^{01235}_{k^{\ac}}, \Lambda) \oplus \rmH^{0}(X^{01345}_{k^{\ac}}, \Lambda) \oplus \rmH^{0}(X^{12345}_{k^{\ac}}, \Lambda)\oplus \rmH^{0}(X^{012345}_{k^{\ac}}, \Lambda).
\end{aligned}
\end{equation*}
On the other hand, we have 
\begin{equation*}
\begin{aligned}
\rmH^{0}(a_{1*}\Lambda)= &\oplus^{5}_{i=0}\rmH^{0}(X^{i(i+1)(i+2)(i+3)}_{k^{\ac}}, \Lambda)\\
                                               &\oplus\oplus^{5}_{i=0}\rmH^{0}(X^{i(i+1)(i+2)(i+4)}_{k^{\ac}}, \Lambda)\\
                                               &\oplus\oplus^{5}_{i=0} \rmH^{0}(X^{i(i+1)(i+2)(i+3)(i+5)}_{k^{\ac}}, \Lambda)\\
                                               &\oplus\rmH^{0}(X^{012345}_{k^{\ac}}, \Lambda).\\
\end{aligned}
\end{equation*}
We claim that the terms 
\begin{equation*}
\begin{aligned}
\rmH^{2}(X^{i(i+1)(i+2)}_{k^{\ac}}, \Lambda(1)), \hphantom{a} \rmH^{2}(X^{024}_{k^{\ac}}, \Lambda(1)),\hphantom{b}\rmH^{2}(X^{135}_{k^{\ac}}, \Lambda(1))\\ 
\end{aligned}
\end{equation*}
in $\rmH^{2}(a_{0*}\Lambda(1))$ are cancelled by the terms 
\begin{equation*}
\begin{aligned}
\rmH^{0}(X^{i(i+1)(i+2)(i+3)}_{k^{\ac}}, \Lambda),\hphantom{c}\rmH^{0}(X^{i(i+1)(i+2)(i+4)}_{k^{\ac}}, \Lambda)\\ 
\end{aligned}
\end{equation*}
in $\rmH^{0}(a_{1*}\Lambda)$ under the Gysin map $\tau$ after localizing at $\frakm_{\triplef^{[p]}}$. For example, consider the term 
\begin{equation*}
\begin{aligned}
\rmH^{2}(X^{012}_{[-++],k^{\ac}}, \Lambda(1))&=\rmH^{2}(\PP^{1}(X^{B}_{-}), \Lambda(1))\otimes \rmH^{0}(\PP^{1}(X^{B}_{+}), \calO)\otimes \rmH^{0}(\PP^{1}(X^{B}_{+}), \Lambda)\\
&\oplus \rmH^{0}(\PP^{1}(X^{B}_{-}), \Lambda)\otimes \rmH^{2}(\PP^{1}(X^{B}_{+}), \Lambda(1))\otimes \rmH^{0}(\PP^{1}(X^{B}_{+}), \Lambda)\\
&\oplus \rmH^{0}(\PP^{1}(X^{B}_{-}), \Lambda)\otimes \rmH^{0}(\PP^{1}(X^{B}_{+}), \Lambda)\otimes \rmH^{2}(\PP^{1}(X^{B}_{+}), \Lambda(1))\\
\end{aligned}
\end{equation*}
then it is cancelled  by 
\begin{equation*}
\begin{aligned}
&\rmH^{0}(X^{0124}_{[\pm++], k^{\ac}}, \Lambda)=\rmH^{0}(X^{B}_{0}(p), \Lambda)\otimes \rmH^{0}(\PP^{1}(X^{B}_{+}), \Lambda) \otimes \rmH^{0}(\PP^{1}(X^{B}_{+}), \Lambda)\\
&\rmH^{0}(X^{0245}_{[+\pm+], k^{\ac}}, \Lambda)=\rmH^{0}(\PP^{1}(X^{B}_{+}), \Lambda)\otimes \rmH^{0}(X^{B}_{0}(p), \Lambda) \otimes \rmH^{0}(\PP^{1}(X^{B}_{+}), \Lambda)\\
&\rmH^{0}(X^{0234}_{[++\pm], k^{\ac}}, \Lambda)=\rmH^{0}(\PP^{1}(X^{B}_{+}), \Lambda)\otimes \rmH^{0}(\PP^{1}(X^{B}_{+}), \Lambda) \otimes \rmH^{0}(X^{B}_{0}(p), \Lambda)\\
\end{aligned}
\end{equation*}
after applying the localizing at $\frakm^{[p]}_{\triplef}$. This follows from Proposition \ref{component-grp}(4) that the Gysin map is surjective up to an Eisenstein part. Similar computations as in this case proves the claim. Then it is clear that  
\begin{equation*}
A_{2}(Y_{k}, \Lambda)^{0}_{\frakm^{[p]}_{\triplef}}\cong\oplus^{5}_{i=0}\rmH^{0}(X^{i(i+1)(i+2)(i+3)(i+5)}_{k}, \Lambda)_{\frakm^{[p]}_{\triplef}}\oplus \rmH^{0}(X^{012345}_{k}, \Lambda)_{\frakm^{[p]}_{\triplef}}.
\end{equation*}
 This finishes the proof of the first part. 
 \end{proof}
 
 \begin{remark}
Those surfaces of the form $Y^{i(i+1)(i+2)(i+3)(i+5)}$ and $Y^{012345}$ should be considered as the \emph{featuring cycles} in the terminology of \cite[\S 3.2]{Liu-cubic}. To compute the potential map, we need to understand the intersection matrix of these featuring cycles. The computation below can be viewed as a down-to-earth way of computing such intersection matrix although the intersection numbers do not appear explicitly. \end{remark}
 
\begin{myproof}{Theorem}{\ref{arithmetic-level-raising}}
The first isomorphism in the statement of the theorem is clearly a consequence of the second more canonical isomorphism in light of what we have explained in the Shimura curve case in Lemma \ref{comp-char}. Therefore we will prove the more canonical isomorphism.

Recall that we have by Lemma \ref{feature-cycle}
\begin{enumerate}
\item $A_{2}(Y_{k}, \Lambda)^{0}_{\frakm^{[p]}_{\triplef}}=\oplus^{5}_{i=0}\rmH^{0}(X^{i(i+1)(i+2)(i+3)(i+5)}_{k}, \Lambda)_{\frakm^{[p]}_{\triplef}}\oplus \rmH^{0}(X^{012345}_{k}, \Lambda)_{\frakm^{[p]}_{\triplef}}$;
\item  $A^{2}(Y_{k}, \Lambda)^{0}_{\frakm^{[p]}_{\triplef}}=\oplus^{5}_{i=0}\rmH^{2}(X^{i(i+1)(i+2)(i+3)(i+5)}_{k}, \Lambda(1))_{\frakm^{[p]}_{\triplef}}\oplus \rmH^{0}(X^{012345}_{k}, \Lambda)_{\frakm^{[p]}_{\triplef}}$.
\end{enumerate}
with the potential map $A_{2}(Y_{k}, \Lambda)^{0}_{\frakm^{[p]}_{\triplef}}\xrightarrow{\nabla} A^{2}(Y_{k}, \Lambda)^{0}_{\frakm^{[p]}_{\triplef}}$ induced by $\tau\circ\rho$.  

We consider the terms 
\begin{equation*}
\begin{aligned}
&\rmH^{2}(X^{01235}_{[-\pm\pm]}, \Lambda(1))_{\frakm^{[p]}_{\triplef}}=\rmH^{2}(\PP^{1}(X^{B}_{-}), \Lambda(1))_{\frakm^{[p]}_{1}}\otimes \rmH^{0}(X^{B}_{0}(p), \Lambda)_{\frakm^{[p]}_{2}}\otimes \rmH^{0}(X^{B}_{0}(p), \Lambda)_{\frakm^{[p]}_{3}}\\
&\rmH^{2}(X^{02345}_{[+\pm\pm]}, \Lambda(1))_{\frakm^{[p]}_{\triplef}}=\rmH^{2}(\PP^{1}(X^{B}_{+}), \Lambda(1))_{\frakm^{[p]}_{1}}\otimes \rmH^{0}(X^{B}_{0}(p), \Lambda)_{\frakm^{[p]}_{2}}\otimes \rmH^{0}(X^{B}_{0}(p), \Lambda)_{\frakm^{[p]}_{3}}\\
\end{aligned}
\end{equation*}
in  $A^{2}(Y_{k}, \Lambda)^{0}_{\frakm^{[p]}_{\triplef}}$ and consider the images of the terms
\begin{equation*}
\begin{aligned}
&\rmH^{0}(X^{01235}_{[-\pm\pm]}, \Lambda)_{\frakm^{[p]}_{\triplef}}=\rmH^{0}(\PP^{1}(X^{B}_{-}), \Lambda)_{\frakm^{[p]}_{1}}\otimes \rmH^{0}(X^{B}_{0}(p), \Lambda)_{\frakm^{[p]}_{2}}\otimes \rmH^{0}(X^{B}_{0}(p), \Lambda)_{\frakm^{[p]}_{3}}\\
&\rmH^{0}(X^{02345}_{[+\pm\pm]}, \Lambda)_{\frakm^{[p]}_{\triplef}}=\rmH^{0}(\PP^{1}(X^{B}_{+}), \Lambda)_{\frakm^{[p]}_{1}}\otimes \rmH^{0}(X^{B}_{0}(p), \Lambda)_{\frakm^{[p]}_{2}}\otimes \rmH^{0}(X^{B}_{0}(p), \Lambda)_{\frakm^{[p]}_{3}}\\
\end{aligned}
\end{equation*}
from $A_{2}(Y_{k}, \lambda)^{0}_{\frakm^{[p]}_{\triplef}}$ via $\nabla$. The map $\nabla$ restricted to  $\rmH^{0}(X^{01235}_{k}, \Lambda)_{\frakm^{[p]}_{\triplef}}$ fits into the following diagram
\begin{equation*}
\begin{tikzcd}
\rmH^{2}(Y^{135}_{k}, \Lambda(1))_{\frakm^{[p]}_{\triplef}} \arrow[r, "\rho"]                 &\rmH^{2}(Y^{012345}_{k}, \Lambda(1))_{\frakm^{[p]}_{\triplef}}  \arrow[r,"\tau"]                 & \rmH^{4}(Y^{135}_{k}, \Lambda(2))_{\frakm^{[p]}_{\triplef}}           \\
 \rmH^{0}(X^{01235}_{k}, \Lambda)_{\frakm^{[p]}_{\triplef}}\arrow[u, hook] \arrow[r, "\rho"] &  \rmH^{0}(X^{012345}_{k}, \Lambda)_{\frakm^{[p]}_{\triplef}} \arrow[u, hook] \arrow[r, "\tau"] &  \rmH^{2}(X^{01235}_{k}, \Lambda(1))_{\frakm^{[p]}_{\triplef}} \arrow[u, hook]
\end{tikzcd}
\end{equation*}
and similarly the map $\nabla$ restricted to  $\rmH^{0}(X^{02345}_{k}, \Lambda)_{\frakm^{[p]}_{\triplef}}$ fits into the following diagram
\begin{equation*}
\begin{tikzcd}
\rmH^{2}(Y^{024}_{k}, \Lambda(1))_{\frakm^{[p]}_{\triplef}} \arrow[r, "\rho"]                 &\rmH^{2}(Y^{012345}_{k}, \Lambda(1))_{\frakm^{[p]}_{\triplef}}  \arrow[r,"\tau"]                 & \rmH^{4}(Y^{024}_{k}, \Lambda(2))_{\frakm^{[p]}_{\triplef}}           \\
 \rmH^{0}(X^{02345}_{k}, \Lambda)_{\frakm^{[p]}_{\triplef}}\arrow[u, hook] \arrow[r, "\rho"] &  \rmH^{0}(X^{012345}_{k}, \Lambda)_{\frakm^{[p]}_{\triplef}} \arrow[u, hook] \arrow[r, "\tau"] &  \rmH^{2}(X^{02345}_{k}, \Lambda(1))_{\frakm^{[p]}_{\triplef}}. \arrow[u, hook]
\end{tikzcd}
\end{equation*}
Here the top row of the diagram identifies the locations of the featuring cycles and the second row is the actural restriction of the map $\nabla$ to the featuring cycles. Putting these diagrams together, the restriction of $\nabla$ at
\begin{equation*}
\rmH^{0}(X^{01235}_{[-\pm\pm], k}, \Lambda)_{\frakm^{[p]}_{\triplef}}\oplus\rmH^{0}(X^{02345}_{[+\pm\pm], k}, \Lambda)_{\frakm^{[p]}_{\triplef}}\xrightarrow{\nabla} \rmH^{2}(X^{01235}_{[-\pm\pm], k}, \Lambda)_{\frakm^{[p]}_{\triplef}}\oplus\rmH^{2}(X^{02345}_{[+\pm\pm], k}, \Lambda)_{\frakm^{[p]}_{\triplef}}
\end{equation*}
is given by the composite
\begin{equation*}
\begin{tikzcd}
\rmH^{0}(\PP^{1}(X^{B}_{\pm}), \Lambda)_{\frakm^{[p]}_{1}}\otimes \rmH^{0}(X^{B}_{0}(p), \Lambda)_{\frakm^{[p]}_{2}}\otimes \rmH^{0}(X^{B}_{0}(p), \Lambda)_{\frakm^{[p]}_{3}}\arrow[d] \\
\rmH^{0}(X^{B}_{0}(p), \Lambda)_{\frakm^{[p]}_{1}}\otimes \rmH^{0}(X^{B}_{0}(p), \Lambda)_{\frakm^{[p]}_{2}}\otimes \rmH^{0}(X^{B}_{0}(p), \Lambda)_{\frakm^{[p]}_{3}} \arrow[d] \\
\rmH^{2}(\PP^{1}(X^{B}_{\pm}), \Lambda(1))_{\frakm^{[p]}_{1}}\otimes \rmH^{0}(X^{B}_{0}(p), \Lambda)_{\frakm^{[p]}_{2}}\otimes \rmH^{0}(X^{B}_{0}(p), \Lambda)_{\frakm^{[p]}_{3}}\\         
\end{tikzcd}
\end{equation*} 
where we write $\rmH^{*}(\PP^{1}(X^{B}_{\pm}), \Lambda)$ for $\rmH^{*}(\PP^{1}(X^{B}_{-}), \Lambda)\oplus \rmH^{*}(\PP^{1}(X^{B}_{+}), \Lambda)$.
Notice that the composite of the above map is given by the intersection matrix 
\begin{equation*}
\begin{pmatrix}
-(p+1) &T_{p}\\
T_{p} &-(p+1)\\
\end{pmatrix}
\end{equation*}
on the first factors of the tensor products and the identity map restricted to the rest of the two factors by what we have explained in the Shimura curve case treated as in Theorem \ref{level-raise-curve}. After quotient out by the ideals $(I^{[p]}_{1, n}, I^{[p]}_{2, n}, I^{[p]}_{3, n})$, we see the quotient of 
\begin{equation}
\rmH^{2}(X^{01235}_{[-\pm\pm]}, \Lambda(1))_{\frakm^{[p]}_{\triplef}}\oplus \rmH^{2}(X^{02345}_{[+\pm\pm]}, \Lambda(1))_{\frakm^{[p]}_{\triplef}}
\end{equation}
in $A^{2}(Y_{k}, \Lambda)^{0}_{\frakm^{[p]}_{\triplef}}$ by the image of 
\begin{equation}
\rmH^{0}(X^{01235}_{[-\pm\pm]}, \Lambda)_{\frakm^{[p]}_{\triplef}}\oplus\rmH^{0}(X^{02345}_{[+\pm\pm]}, \Lambda)_{\frakm^{[p]}_{\triplef}}
\end{equation}
in $A_{2}(Y_{k}, \Lambda)^{0}_{\frakm^{[p]}_{\triplef}}$
can be identified with
\begin{equation*}
\Phi_{\Lambda}/I^{[p]}_{1, n}\otimes  \rmH^{0}(X^{B}_{0}(p), \Lambda)_{\frakm^{[p]}_{2}}/I^{[p]}_{2,n}\otimes  \rmH^{0}(X^{B}_{0}(p), \Lambda)_{\frakm^{[p]}_{3}}/I^{[p]}_{3,n}.
\end{equation*}

Next we consider the term
\begin{equation*}
\rmH^{2}(X^{01235}_{[-\pm\pm]}, \Lambda(1))_{\frakm^{[p]}_{\triplef}} 
\end{equation*}
in  $A^{2}(Y_{k}, \Lambda)^{0}_{\frakm^{[p]}_{\triplef}}$ and the image of the terms
\begin{equation*}
\rmH^{0}(X^{01345}_{[\pm-\pm]}, \Lambda)_{\frakm^{[p]}_{\triplef}}\hphantom{a}\text{and}\hphantom{a}\rmH^{0}(X^{01234}_{[\pm+\pm]}, \Lambda)_{\frakm^{[p]}_{\triplef}} 
\end{equation*}
in  $A_{2}(Y_{k}, \Lambda)^{0}_{\frakm^{[p]}_{\triplef}}$ via $\nabla$. 

Then we have similar diagrams as in the previous case 
\begin{equation*}
\begin{tikzcd}
\rmH^{2}(Y^{135}_{k}, \Lambda(1))_{\frakm^{[p]}_{\triplef}} \arrow[r, "\rho"]                 &\rmH^{2}(Y^{012345}_{k}, \Lambda(1))_{\frakm^{[p]}_{\triplef}}  \arrow[r,"\tau"]                 & \rmH^{4}(Y^{135}_{k},\Lambda(2))_{\frakm^{[p]}_{\triplef}}           \\
 \rmH^{0}(X^{01345}_{k}, \Lambda)_{\frakm^{[p]}_{\triplef}}\arrow[u, hook] \arrow[r, "\rho"] &  \rmH^{0}(X^{012345}_{k}, \Lambda)_{\frakm^{[p]}_{\triplef}} \arrow[u, hook] \arrow[r, "\tau"] &  \rmH^{2}(X^{01235}_{k}, \Lambda(1))_{\frakm^{[p]}_{\triplef}}. \arrow[u, hook]
\end{tikzcd}
\end{equation*}
and 
\begin{equation*}
\begin{tikzcd}
\rmH^{2}(Y^{024}_{k},\Lambda(1))_{\frakm^{[p]}_{\triplef}} \arrow[r, "\rho"]                 &\rmH^{2}(Y^{012345}_{k}, \Lambda(1))_{\frakm^{[p]}_{\triplef}}  \arrow[r,"\tau"]                 & \rmH^{4}(Y^{135}_{k}, \Lambda(2))_{\frakm^{[p]}_{\triplef}}           \\
 \rmH^{0}(X^{01234}_{k}, \Lambda)_{\frakm^{[p]}_{\triplef}}\arrow[u, hook] \arrow[r, "\rho"] &  \rmH^{0}(X^{012345}_{k}, \Lambda)_{\frakm^{[p]}_{\triplef}} \arrow[u, hook] \arrow[r, "\tau"] &  \rmH^{2}(X^{01235}_{k}, \Lambda(1))_{\frakm^{[p]}_{\triplef}}. \arrow[u, hook]
\end{tikzcd}
\end{equation*}
Putting these together, the restriction at 
\begin{equation*}
\rmH^{0}(X^{01345}_{[-\pm\pm], k}, \Lambda)_{\frakm^{[p]}_{\triplef}}\oplus\rmH^{0}(X^{01234}_{[+\pm\pm], k}, \Lambda)_{\frakm^{[p]}_{\triplef}} \xrightarrow{\nabla} \rmH^{2}(X^{01235}_{[-\pm\pm]}, \Lambda(1))_{\frakm^{[p]}_{\triplef}}
\end{equation*}
 is given by 
\begin{equation*}
\begin{tikzcd}
\rmH^{0}(X^{B}_{0}(p), \Lambda)_{\frakm^{[p]}_{1}}\otimes \rmH^{0}(\PP^{1}(X^{B}_{\pm}), \Lambda)_{\frakm^{[p]}_{2}}\otimes\rmH^{0}(X^{B}_{0}(p), \Lambda)_{\frakm^{[p]}_{3}}\arrow[d] \\
\rmH^{0}(X^{B}_{0}(p), \Lambda)_{\frakm^{[p]}_{1}}\otimes \rmH^{0}(X^{B}_{0}(p), \Lambda)_{\frakm^{[p]}_{2}}\otimes \rmH^{0}(X^{B}_{0}(p), \Lambda)_{\frakm^{[p]}_{3}} \arrow[d] \\
\rmH^{2}(\PP^{1}(X^{B}_{-}), \Lambda(1))_{\frakm^{[p]}_{1}}\otimes \rmH^{0}(X^{B}_{0}(p), \Lambda)_{\frakm^{[p]}_{2}}\otimes \rmH^{0}(X^{B}_{0}(p), \Lambda)_{\frakm^{[p]}_{3}}.\\         
\end{tikzcd}
\end{equation*} 
In the above diagram, the first arrow is given by the restriction morphism 
\begin{equation*}
\rmH^{0}(\PP^{1}(X^{B}_{-}), \Lambda)_{\frakm^{[p]}_{2}}\oplus  \rmH^{0}(\PP^{1}(X^{B}_{+}), \Lambda)_{\frakm^{[p]}_{2}}\rightarrow \rmH^{0}(X^{B}_{0}(p), \Lambda)_{\frakm^{[p]}_{2}}
\end{equation*}
on the second factor of the tensor product and the identity map on the rest of the factors. The second arrow is given by the Gysin morphism 
\begin{equation*}
\rmH^{0}(X^{B}_{0}(p), \Lambda)_{\frakm^{[p]}_{1}}\xrightarrow{\tau} \rmH^{2}(\PP^{1}(X^{B}_{-}), \Lambda(1))_{\frakm^{[p]}_{1}}
\end{equation*}
on the first factor of the tensor product which is surjective up to Eisenstein part by Proposition \ref{component-grp} $(2)$. Therefore by Proposition \ref{component-grp} $(3)$,  the quotient of term
\begin{equation*}
\rmH^{2}(X^{01235}_{[-\pm\pm]}, \Lambda(1))_{\frakm^{[p]}_{\triplef}}
\end{equation*} 
in  $A_{2}(Y_{k}, \Lambda)^{0}_{\frakm^{[p]}_{\triplef}}$ by the terms
\begin{equation*}
\rmH^{0}(X^{01345}_{[\pm-\pm], k}, \Lambda)_{\frakm^{[p]}_{\triplef}}\oplus\rmH^{0}(X^{01234}_{[\pm+\pm], k}, \Lambda)_{\frakm^{[p]}_{\triplef}}
\end{equation*}
 in  $A^{2}(Y_{k}, \Lambda)^{0}_{\frakm^{[p]}_{\triplef}}$ is given by 
\begin{equation*}
\rmH^{2}(\PP^{1}(X^{B}_{-}), \Lambda(1))/I^{[p]}_{1, n}\otimes\calX^{\vee}_{\Lambda}/I^{[p]}_{2, n}\otimes \rmH^{0}(X^{B}_{0}(p), \Lambda)/I^{[p]}_{3, n}
\end{equation*}
after we quotient out the ideals $(I^{[p]}_{1, n}, I^{[p]}_{2, n}, I^{[p]}_{3, n})$.

Finally, we consider the term  $\rmH^{0}(X^{012345}_{k}, \Lambda)_{\frakm^{[p]}_{\triplef}}$ in $A_{2}(Y_{k}, \Lambda)^{0}_{\frakm^{[p]}_{\triplef}}$ and the same cohomology group $\rmH^{0}(X^{012345}_{k}, \Lambda)_{\frakm^{[p]}_{\triplef}}$ in $A^{2}(Y_{k}, \Lambda)^{0}_{\frakm^{[p]}_{\triplef}}$. It is not difficult to see by a similar reasoning as before that the restriction of $\nabla$ on this term is given by the identity map. Therefore this term does not contribute to the quotient.

If we make similar computations for all the terms in $A_{2}(Y_{k}, \Lambda)^{0}_{\frakm^{[p]}_{\triplef}}$ and $A^{2}(Y_{k}, \Lambda)^{0}_{\frakm^{[p]}_{\triplef}}$, we obtain the following isomorphism by \eqref{sing-exact}
\begin{equation*}
\begin{aligned}
 \rmH^{1}_{\sing}(K, \rmM^{[p]}_{n}(\triplef)(-1))  \cong& \Phi_{\calO_{1}}/I^{[p]}_{1,n}\otimes \calX^{\vee}_{\calO_{2}}/I^{[p]}_{2,n}\otimes \calX^{\vee}_{\calO_{2}}/I^{[p]}_{3,n}\\
& \calX^{\vee}_{\calO_{1}}/I^{[p]}_{1,n}\otimes\Phi_{\calO_{2}}/I^{[p]}_{2,n}\otimes \calX^{\vee}_{\calO_{3}}/I^{[p]}_{3,n}\\
&\calX^{\vee}_{\calO_{1}}/I^{[p]}_{1,n}\otimes \calX^{\vee}_{\calO_{2}}/I^{[p]}_{2,n}\otimes\Phi_{\calO_{3}}/I^{[p]}_{3,n}.
\end{aligned}
\end{equation*} 
Finally we apply Lemma \ref{comp-char} and arrive at the desired isomorphism
\begin{equation*}
\rmH^{1}_{\sing}(K, \rmM^{[p]}_{n}(\triplef)(-1))  \cong (\Phi_{\calO_{1}}/I^{[p]}_{1,n}\otimes \Phi_{\calO_{2}}/I^{[p]}_{2,n}\otimes \Phi_{\calO_{3}}/I^{[p]}_{3,n})^{\oplus 3}.
\end{equation*} 
\end{myproof}

\begin{corollary}\label{main-coro}
Let $p$ be an $n$-admissible prime for $\triplef$. Under the assumption in the Theorem \ref{arithmetic-level-raising}, we have the following statements. 
\begin{enumerate}
\item The singular quotient $\rmH^{1}_{\sing}(\QQ_{p}, \rmM^{[p]}_{n}(\triplef)(-1))$ is free of rank $3$ over $\calO_{n}$. 
\item We have a canonical isomorphism
\begin{equation*}
\rmH^{1}_{\sing}(\QQ_{p}, \rmM^{[p]}_{n}(\triplef)(-1)) \cong \oplus^{3}_{j=1}(\otimes^{3}_{i=1}\Phi_{\calO_{i}}/I^{[p]}_{i,n})
\end{equation*}
which induces an isomorphism
\begin{equation*}
 \rmH^{1}_{\sing}(\QQ_{p}, \rmM^{[p]}_{n}(\triplef)(-1))\cong \oplus^{3}_{j=1}(\otimes^{3}_{i=1}S^{B}_{2}(N^{+}, \calO_{i})/I_{i,n})
 \end{equation*}
\end{enumerate}
\end{corollary}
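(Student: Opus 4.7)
The plan is to derive Corollary \ref{main-coro} by Galois descent from Theorem \ref{arithmetic-level-raising}, which already gives the analogous statement over $K=\QQ_{p^{2}}$. Since $\QQ_{p^{2}}/\QQ_{p}$ is unramified of degree two and $l\geq 5$ is odd, every positive-degree cohomology group of $\Gal(\QQ_{p^{2}}/\QQ_{p})\cong \ZZ/2\ZZ$ with coefficients in an $l$-torsion module vanishes. Consequently the inflation--restriction sequence attached to this extension collapses into a canonical isomorphism
\[
\rmH^{1}(\QQ_{p}, \rmM^{[p]}_{n}(\triplef)(-1)) \xrightarrow{\sim} \rmH^{1}(\QQ_{p^{2}}, \rmM^{[p]}_{n}(\triplef)(-1))^{\Gal(\QQ_{p^{2}}/\QQ_{p})},
\]
and similarly for the unramified (finite) subgroups. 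Passing to quotients yields a canonical isomorphism
\[
\rmH^{1}_{\sing}(\QQ_{p}, \rmM^{[p]}_{n}(\triplef)(-1)) \cong \rmH^{1}_{\sing}(\QQ_{p^{2}}, \rmM^{[p]}_{n}(\triplef)(-1))^{\Gal(\QQ_{p^{2}}/\QQ_{p})}.
\]

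The remaining task is to compute the action of the nontrivial element $\Frob_{p}\in\Gal(\FF_{p^{2}}/\FF_{p})$ on the right-hand side of the isomorphism in Theorem \ref{arithmetic-level-raising}. By Proposition \ref{curve-red} and the remarks following it, $\Frob_{p}$ swaps the two $\PP^{1}$-bundles $\PP^{1}(X^{B}_{+})$ and $\PP^{1}(X^{B}_{-})$ in the special fiber $X_{k}$, and acts on the singular locus $X^{B}_{0}(p)$ by the Atkin--Lehner involution $w_{p}$. Tracing through the proof of Theorem \ref{level-raise-curve}, the identification $\rmH^{1}_{\sing}(K,\rmH^{1}(X_{K^{\ac}},\calO_{i}(1))/I^{[p]}_{i,n})\cong S^{B}_{2}(N^{+},\calO_{i})/I_{i,n}$ is realized as the image of the map $(x,y)\mapsto \tfrac{1}{2}(x+\epsilon_{p,i}y)$ from $S^{B}_{2}(N^{+},\calO_{i})^{\oplus 2}$, where the two summands correspond to $X^{B}_{+}$ and $X^{B}_{-}$ respectively. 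Since $\epsilon_{p,i}^{2}=1$, the coordinate swap $(x,y)\leftrightarrow(y,x)$ descends to multiplication by $\epsilon_{p,i}$ on the quotient. Hence, via Lemma \ref{comp-char}, $\Frob_{p}$ acts on each $\Phi_{\calO_{i}}/I^{[p]}_{i,n}$ as multiplication by $\epsilon_{p,i}$.

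Combining these actions, $\Frob_{p}$ acts on every summand $\otimes_{i=1}^{3}\Phi_{\calO_{i}}/I^{[p]}_{i,n}$ of the right-hand side of Theorem \ref{arithmetic-level-raising} as the scalar $\epsilon_{p,1}\epsilon_{p,2}\epsilon_{p,3}$, which equals $1$ by condition $(4)$ of the definition of an $n$-admissible prime for $\triplef$. Thus $\Frob_{p}$ acts trivially on $\oplus^{3}_{j=1}(\otimes^{3}_{i=1}\Phi_{\calO_{i}}/I^{[p]}_{i,n})$, its Galois invariants coincide with the entire module, and both assertions of the corollary follow at once; the further identification with $\oplus^{3}_{j=1}(\otimes^{3}_{i=1}S^{B}_{2}(N^{+},\calO_{i})/I_{i,n})$ is an immediate consequence of Lemma \ref{comp-char}. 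The one delicate point is the computation of Frobenius on the component group, which requires that no extra sign be introduced in passing between the geometric description of the singular quotient (via the potential map) and its identification with the space of quaternionic modular forms; this is pure bookkeeping, but crucial, since condition $(4)$ would otherwise produce the wrong sign.
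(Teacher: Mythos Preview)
Your argument is correct and follows essentially the same route as the paper: descend the isomorphism of Theorem~\ref{arithmetic-level-raising} from $K=\QQ_{p^{2}}$ to $\QQ_{p}$ by showing that $\Gal(K/\QQ_{p})$ acts on each summand $\otimes_{i=1}^{3}\Phi_{\calO_{i}}/I^{[p]}_{i,n}$ by $\epsilon_{p,1}\epsilon_{p,2}\epsilon_{p,3}=1$, so that the invariants are the whole module. The only minor difference is in justifying the Frobenius action on $\Phi_{\calO_{i}}/I^{[p]}_{i,n}$: the paper simply cites \cite[\S1.7]{BD-Mumford} for the fact that the nontrivial element of $\Gal(K/\QQ_{p})$ acts on $\Phi$ by $U_{p}$, which is $\epsilon_{p,i}$ modulo $I^{[p]}_{i,n}$, whereas you trace this through the explicit coordinate swap on $S^{B}_{2}(N^{+},\calO_{i})^{\oplus2}$. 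Your derivation is correct in its conclusion but, as you yourself flag, the ``bookkeeping'' step that Frobenius swaps the two summands without an additional twist is exactly what the cited result encodes; invoking $U_{p}$ directly is cleaner and removes the concern about hidden signs.
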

\begin{proof}
The first part follows from the same proof as in Lemma \ref{3-fin}. For the second part, it follows from the discussion in see \cite[\S 1.7]{BD-Mumford} that the non-trivial element in the Galois group $\Gal(K/\QQ_{p})$ acts by $U_{p}$ on the group of connected components $\Phi$ for the Shimura curve $X$. Thus it acts by the product of $(\epsilon_{p,1}, \epsilon_{p,2}, \epsilon_{p,3})$ on each copy of 
$\otimes^{3}_{i=1}\Phi_{\calO_{i}}/I^{[p]}_{i,n}$
in $\oplus^{3}_{j=1}(\otimes^{3}_{i=1}\Phi_{\calO_{i}}/I^{[p]}_{i,n})$. 
Note that the product of $(\epsilon_{p,1}, \epsilon_{p,2}, \epsilon_{p,3})$ is $1$ by the definition of an $n$-admissible prime $p$ for $\triplef$.  The result follows. 
\end{proof}

\subsection{Diagonal cycle classes and the first reciprocity law} Recall $X=X^{B^{\prime}}_{N^{+}, pN^{-}}$ is the Shimura curve associated to $B^{\prime}$ defined over $\QQ$ with integral model $\mathfrak{X}$ over $\ZZ_{(p)}$. We let 
\begin{equation*}
\theta: \mathfrak{X}\rightarrow \mathfrak{X}^{3}
\end{equation*}
be the diagonal embedding of $\mathfrak{X}$ in the triple fiber product of $\mathfrak{X}$ over  $\ZZ_{(p)}$. Then we obtain a class 
\begin{equation*}
\theta_{*}[\mathfrak{X}\otimes\QQ]\in \mathrm{CH}^{2}(\mathfrak{X}^{3}\otimes \QQ)
\end{equation*}
which we will refer to as the \emph{Gross-Schoen diagonal cycles}. Since $\rmH^{*}(\mathfrak{X}^{3}\otimes{\QQ^{\ac}}, \Lambda(2))_{\frakm^{[p]}_{\triplef}}$ vanishes unless $*=3$ as we have assumed that the maximal ideals in ${\frakm^{[p]}_{\triplef}}$ are residually irreducible, the cycle class map and the Hochschild-Serre spectral sequence gives rise to  the following Abel-Jacobi map 
\begin{equation*}
\mathrm{AJ}^{\circ}_{\triplef}: \Ch^{2}(\mathfrak{X}^{3}\otimes\QQ)\rightarrow \rmH^{1}(\QQ, \rmH^{3}(\mathfrak{X}^{3}\otimes{\QQ^{\ac}}, \Lambda(2))_{\frakm^{[p]}_{\triplef}}).
\end{equation*}
Note that by \eqref{Kunneth}  we have the following isomorphism
\begin{equation*}
\rmH^{3}(\mathfrak{X}^{3}\otimes{\QQ^{\ac}}, \Lambda(2))_{\frakm^{[p]}_{\triplef}}\cong \otimes^{3}_{i=1}\rmH^{1}(\mathfrak{X}\otimes{\QQ^{\ac}}, \Lambda(1))_{\frakm^{[p]}_{i}}.
\end{equation*}
For $i=1, 2, 3$, recall that 
\begin{itemize}
\item $\rmT^{[p]}_{i}=\rmH^{1}(\mathfrak{X}\otimes{\QQ^{\ac}}, \calO_{i}(1))_{\mathfrak{m}^{[p]}_{i}}$;  
\item $\rmT^{[p]}_{i, n}= \rmT^{[p]}_{i}/I^{[p]}_{i, n}$; 
\item $\rmM^{[p]}(\triplef)=\rmT^{[p]}_{1}\otimes \rmT^{[p]}_{2}\otimes \rmT^{[p]}_{3}$;
\item $\rmM^{[p]}_{n}(\triplef)=\rmT^{[p]}_{1, n}\otimes \rmT^{[p]}_{2, n}\otimes \rmT^{[p]}_{3, n}$.
\end{itemize}
Note we have a natural map  
\begin{equation*}
\otimes^{3}_{i=1}\rmH^{1}(\mathfrak{X}\otimes{\QQ^{\ac}}, \Lambda(1))_{\frakm^{[p]}_{i}}\rightarrow \rmM^{[p]}(\triplef)=\otimes^{3}_{i=1}\rmT_{i}
 \end{equation*}
which can be composed with the Abel-Jacobi map $\mathrm{AJ}^{\circ}_{\triplef}$ to obtain the following Abel-Jacobi map for $\rmM^{[p]}(\triplef)(-1)$
\begin{equation*}
\mathrm{AJ}_{\triplef}: \Ch^{2}(\mathfrak{X}^{3}\otimes \QQ)\rightarrow \rmH^{1}(\QQ, \rmM^{[p]}(\triplef)(-1)).\\
\end{equation*} 
We also have a natural quotient map $\rmM^{[p]}(\triplef)(-1)\rightarrow \rmM^{[p]}_{n}(\triplef)(-1)$ which we can further compose with $\mathrm{AJ}_{\triplef}$ to obtain the Abel-Jacobi map for $\rmM^{[p]}_{n}(\triplef)(-1)$
\begin{equation*}
\mathrm{AJ}_{\triplef, n}: \Ch^{2}(\mathfrak{X}^{3}\otimes \QQ)\rightarrow \rmH^{1}(\QQ, \rmM^{[p]}_{n}(\triplef)(-1)).\\
\end{equation*}
Now we consider the following diagram 
\begin{equation*}
\begin{tikzcd}
\Ch^{2}(\mathfrak{X}^{3}\otimes \QQ) \arrow[r, "\mathrm{AJ}_{\triplef, n}"] \arrow[rdd, bend right, "\partial_{p} \mathrm{AJ}_{\triplef, n}"'] &  \rmH^{1}(\QQ, \rmM^{[p]}_{n}(\triplef)(-1)) \arrow[d, "\mathrm{loc}_{p}"] \\
                                   &  \rmH^{1}(\QQ_{p}, \rmM^{[p]}_{n}(\triplef)(-1)) \arrow[d, "\partial_{p}"] \\
                                   &   \rmH^{1}_{\sing}(\QQ_{p}, \rmM^{[p]}_{n}(\triplef)(-1))              
\end{tikzcd}
\end{equation*}
where $\partial_{p}\mathrm{AJ}_{\triplef, n}$ is the composite of the right part of the diagram. Then we define the \emph{Gross-Schoen diagonal cycle class} to be image of  $\theta_{*}[\mathfrak{X}\otimes\QQ]$ under the map $\mathrm{AJ}_{\triplef}$ and we denote this element by 
\begin{equation*}
\Theta^{[p]}\in  \rmH^{1}(\QQ, \rmM^{[p]}(\triplef)(-1)).
\end{equation*}
Similarly we denote the image of  $\theta_{*}[\mathfrak{X}\otimes\QQ]$ under the map $\mathrm{AJ}_{\triplef, n}$ by
\begin{equation*}
\Theta^{[p]}_{n}\in  \rmH^{1}(\QQ, \rmM^{[p]}_{n}(\triplef)(-1)).
\end{equation*}
We will be concerned with the singular residue at $p$ of the element $\Theta^{[p]}_{n}$ in the following. By definition, this is the image of the cycle  $\theta_{*}[\mathfrak{X}\otimes\QQ]$ under the map $\partial_{p}\mathrm{AJ}_{\triplef, n}$ which we will denote by
\begin{equation*}
\partial_{p}\Theta^{[p]}_{n }\in \rmH^{1}_{\sing}(\QQ_{p}, \rmM^{[p]}_{n}(\triplef)(-1)).
\end{equation*}
By Corollary \ref{main-coro},  we can view $\partial_{p}\Theta^{[p]}_{n}$ as an element of  
\begin{equation*}
\oplus^{3}_{j=1}(\otimes^{3}_{i=1}S^{B}_{2}(N^{+}, \calO_{i}){/I_{i,n}}).
\end{equation*}
For $j=1, 2, 3$, we denote by 
\begin{equation*}
\partial^{(j)}_{p}\Theta^{[p]}_{n}\in \otimes^{3}_{i=1}S^{B}_{2}(N^{+}, \calO_{i}){/I_{i,n}}
\end{equation*}
its projection to the $j$-th direct summand in 
\begin{equation*}
\oplus^{3}_{j=1}(\otimes^{3}_{i=1}S^{B}_{2}(N^{+}, \calO_{i}){/I_{i,n}}).
\end{equation*} 
We define a pairing 
\begin{equation}\label{pairing}
(\hphantom{a},\hphantom{a}):\otimes^{3}_{i=1}S^{B}_{2}(N^{+}, \calO_{i})\times \otimes^{3}_{i=1}S^{B}_{2}(N^{+}, \calO_{i}) \rightarrow \calO
\end{equation}
by the following formula. Let 
\begin{equation*}
\zeta_{1}\otimes \zeta_{2}\otimes \zeta_{3}, \phi_{1}\otimes \phi_{2}\otimes \phi_{3}\in \otimes^{3}_{i=1}S^{B}_{2}(N^{+}, \calO_{i})
\end{equation*}
then we define  
\begin{equation*}
(\zeta_{1}\otimes \zeta_{2}\otimes \zeta_{3},  \phi_{1}\otimes \phi_{2}\otimes \phi_{3})=\sum_{z_{1}, z_{2}, z_{3}} \zeta_{1}\phi_{1}(z_{1})\otimes\zeta_{2}\phi_{2}(z_{2})\otimes\zeta_{3}\phi_{3}(z_{3})
\end{equation*}
where $(z_{1}, z_{2}, z_{3})$ runs through all the elements in the finite set $(X^{B})^{3}$. Note that when $\calO_{1}=\calO_{2}=\calO_{3}$, the pairing which apriori valued in $\calO\otimes \calO\times \calO$ can be taken to be valued in $\calO$ by using the natural multiplication map. This pairing subsequently induces a pairing 
\begin{equation*}
\begin{aligned}
(\hphantom{a},\hphantom{a}):\otimes^{3}_{i=1}S^{B}_{2}(N^{+}, \calO_{i})[I_{i, n}]\times \otimes^{3}_{i=1}S^{B}_{2}(N^{+}, \calO_{i}){/I_{i, n}} \rightarrow \calO_{n}.\\
\end{aligned}
\end{equation*}
The following theorem is the analogue of the \emph{first reciprocity law} for Heegner points on Shimura curves \cite[Theorem 4.1]{BD-Main}, the \emph{congruence formulae} in \cite[Theorem 4.11]{Liu-HZ} and \cite[Theorem 4.5]{Liu-cubic}.
\begin{theorem}[First reciprocity law]\label{recip}
Let $p$ be an $n$-admissible prime for $\triplef$. We assume the assumptions in Theorem \ref{arithmetic-level-raising} are satisfied. Let $\phi_{1}\otimes \phi_{2}\otimes \phi_{3}\in \otimes^{3}_{i=1}S^{B}_{2}(N^{+}, \calO_{i})[I_{i,n}]$. Then for $j=1, 2, 3$, the following formula holds
\begin{equation*}
(\partial^{(j)}_{p}\Theta^{[p]}_{n}, \phi_{1}\otimes \phi_{2}\otimes \phi_{3})=(p+1)^{3}\sum_{z\in X^{B}}\phi_{1}(z)\otimes\phi_{2}(z)\otimes\phi_{3}(z).
\end{equation*} 
\end{theorem}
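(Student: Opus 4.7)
The plan is to compute $\partial^{(j)}_p \Theta^{[p]}_n$ explicitly by combining Proposition \ref{cal-aj} with the geometric description of the singular quotient obtained in Theorem \ref{arithmetic-level-raising}, then pair the result against $\phi_1 \otimes \phi_2 \otimes \phi_3$ to recover the period integral. More precisely, I would first invoke Proposition \ref{cal-aj} for the cycle $z = \theta_*[\mathfrak{X}\otimes\QQ]$. This tells us that $\partial_p\Theta^{[p]}_n = \eta(\tilde{z})$ where $\tilde{z} \in A^2(Y_k,\calO)^0_{\frakm^{[p]}_{\triplef}}$ is the image of the Zariski closure $\calZ$ of $z$ in $\calY$ under the push-forward and restriction maps described there.

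Second, I would identify $\calZ$ as the strict transform of the diagonal $\theta(\mathfrak{X}) \subset \mathfrak{X}^3$ under the blow-up $\pi: \calY \to \mathfrak{X}^3$. Since the diagonal meets the special fiber $X^3_k$ in two irreducible components, namely the image of $\PP^1(X^B_+)$ inside $\PP^1(X^B_+)^3 = X^{024}_{[+++]}$ and its counterpart inside $X^{135}_{[---]}$, and since the diagonal generically avoids the centers of the two blow-ups described in Section 3.1, the special fiber of $\calZ$ splits as $\Delta^+ \sqcup \Delta^-$ sitting in $Y^{024}\cup Y^{135}$. I would then propagate $\tilde{z}$ through the explicit decomposition of $A^2(Y_k, \calO)^0_{\frakm^{[p]}_{\triplef}}$ given in Lemma \ref{feature-cycle} by computing the restriction of $\Delta^\pm$ to each featuring cycle $X^{i(i+1)(i+2)(i+3)(i+5)}_k$ and $X^{012345}_k$. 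Following the same chain of identifications used in the proof of Theorem \ref{arithmetic-level-raising}, via the surjectivity up to Eisenstein of the Gysin and restriction maps from Proposition \ref{component-grp}, I would then carry $\tilde{z}$ into $\oplus^3_{j=1}(\otimes^3_{i=1}\Phi_{\calO_i}/I^{[p]}_{i,n})$ and further into $\oplus^3_{j=1}(\otimes^3_{i=1}S^B_2(N^+,\calO_i)/I_{i,n})$ through Corollary \ref{main-coro} and Lemma \ref{comp-char}.

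The main obstacle is the explicit intersection-theoretic book-keeping of the contribution of each $\Delta^\pm$ to the featuring cycles. Concretely, $\Delta^\pm$ is a $\PP^1$ sitting as the diagonal of a $\PP^1$-bundle; its intersection with the $\PP^2$-bundle $Y^{012345}$ and with the exceptional divisors $Y^{i(i+1)(i+2)(i+3)(i+5)}$ must be computed fiberwise and then pushed to the base Shimura sets. After this computation, each of the three tensor factors contributes a factor of $p+1$, coming from the degree $p+1$ of the specialization map $X^B_0(p) \to X^B_\pm$ which underlies the component-group description, and this accounts for the prefactor $(p+1)^3$ in the reciprocity formula. The content is that, once all identifications are made, $\partial^{(j)}_p \Theta^{[p]}_n$ is a diagonal tensor supported on $\{(z,z,z) : z \in X^B\} \subset (X^B)^3$ with total multiplicity $(p+1)^3$.

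Finally, under the pairing \eqref{pairing}, applying such a diagonal tensor to $\phi_1\otimes\phi_2\otimes\phi_3$ collapses the formal product sum on $(X^B)^3$ to the diagonal sum, producing exactly
\[
(p+1)^3 \sum_{z\in X^B} \phi_1(z)\otimes\phi_2(z)\otimes\phi_3(z).
\]
The fact that this value is independent of $j\in\{1,2,3\}$ follows from the manifest $S_3$-symmetry of $\theta_*[\mathfrak{X}]$ under permutation of the three factors of $\mathfrak{X}^3$, which is compatible with the permutation action of $S_3$ on the three direct summands of $\oplus^3_j(\otimes^3_i S^B_2(N^+,\calO_i)/I_{i,n})$ appearing in Corollary \ref{main-coro}. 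This completes the proof sketch.
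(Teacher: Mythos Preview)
Your approach is essentially the paper's: invoke Proposition \ref{cal-aj}, push the diagonal through the featuring-cycle decomposition of Lemma \ref{feature-cycle} and the identifications in the proof of Theorem \ref{arithmetic-level-raising}, and extract the factor $(p+1)^3$ from the degree of the specialization maps $\pi_{\pm}:X^{B}_{0}(p)\to X^{B}$. One correction worth making: the lift $\tilde{\theta}:\mathfrak{X}\to\calY$ is \emph{not} obtained because the diagonal ``generically avoids the centers'' (indeed its special-fiber component $\PP^{1}(X^{B}_{-})$ lies inside the first blow-up center $X^{135}$), but rather because $\mathfrak{X}$ is regular and hence the universal property of blow-ups applies; once this is fixed, the paper streamlines your ``intersection-theoretic book-keeping'' by observing that the class $\partial^{(j)}_{p}\Theta^{[p]}_{n}$ is simply the image of the characteristic function $\mathbf{1}_{0}(p)$ on $X^{B}_{0}(p)$ under the relevant Gysin-plus-identity map, after which the formula is immediate from $\pi_{+,*}\mathbf{1}_{0}(p)=\epsilon_{p,i}\pi_{-,*}\mathbf{1}_{0}(p)=(p+1)\cdot\mathbf{1}_{X^{B}}$.
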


\begin{proof}
We only prove the formula  for $j=1$, the other cases are proved exactly the same way. Consider the diagonal embedding of 
\begin{equation*}
\theta: \mathfrak{X}\rightarrow \mathfrak{X}^{3}
\end{equation*}
of the model of $X$ over $\calO_{K}$ into its threefold fiber product. Since $\mathfrak{X}$ is regular, the map $\theta$ extends to a map $\tilde{\theta}: \mathfrak{X}\rightarrow \calY$ such that $\pi\circ\tilde{\theta}=\theta$ by the universal property of the blow-up. We use the same notation $\tilde{\theta}: X_{k}\rightarrow Y_{k}$ to denote the map induced on the special fiber.  We apply Proposition \ref{cal-aj} to calculate $\partial^{(1)}_{p}\Theta^{[p]}_{n}$.  Thus we need to find the image of 
\begin{equation*}
Y^{(0)}_{k}\times_{Y_{k}} \tilde{\theta}_{*}X_{k} 
\end{equation*}
under the cycle class map in $A^{2}(Y_{k}, \Lambda)^{0}_{\frakm^{[p]}_{\triplef}}$. By Lemma \ref{feature-cycle} and the proof of Theorem \ref{arithmetic-level-raising}, the class $\partial^{(1)}_{p}\Theta^{[p]}_{n}$ is represented by the image of the characteristic function $\mathbf{1}_{0}(p)$ on $X^{B}_{0}(p)$ under the map
\begin{equation}
\rmH^{0}(X^{B}_{0}(p), \Lambda)\rightarrow \rmH^{2}(\PP^{1}(X^{B}_{\pm}), \Lambda(1))\otimes\rmH^{0}(X^{B}_{0}(p), \Lambda)\otimes\rmH^{0}(X^{B}_{0}(p), \Lambda).
\end{equation}
which is induced by the Gysin map on the first factor of the tensor product and by the identity maps on the rest of the two factors of the tensor product. Recall that we have two natural transition maps of Shimura sets
\begin{equation*}
\pi_{+}: X^{B}_{0}(p)\rightarrow X^{B} \text{\hphantom{a}and\hphantom{b}} \pi_{-}: X^{B}_{0}(p)\rightarrow X^{B}.
\end{equation*}
By making explicit the maps
\begin{equation*}
\begin{aligned}
&\rmH^{2}(\PP^{1}(X^{B}_{\pm}), \Lambda(1))\otimes\rmH^{0}(X^{B}_{0}(p), \Lambda)\otimes\rmH^{0}(X^{B}_{0}(p), \Lambda)\\
&\rightarrow \Phi_{\calO_{1}}/I^{[p]}_{1,n}\otimes \calX^{\vee}_{ \calO_{2}}/I^{[p]}_{2,n}\otimes \calX^{\vee}_{ \calO_{3}}/I^{[p]}_{3,n}\\
&\rightarrow \Phi_{\calO_{1}}/I^{[p]}_{1,n}\otimes \Phi_{ \calO_{2}}/I^{[p]}_{2,n}\otimes \Phi_{ \calO_{3}}/I^{[p]}_{3,n}\\
&\cong S^{B}_{2}(N^{+}, \calO_{1}){/I_{1,n}}\otimes S^{B}_{2}(N^{+}, \calO_{2}){/I_{2,n}}\otimes S^{B}_{2}(N^{+}, \calO_{3}){/I_{3,n}}.\\
\end{aligned}
\end{equation*}
The element $\partial^{(1)}_{p}\Theta^{[p]}_{n}\in S^{B}_{2}(N^{+}, \calO_{1}){/I_{1,n}}\otimes S^{B}_{2}(N^{+}, \calO_{2}){/I_{2,n}}\otimes S^{B}_{2}(N^{+}, \calO_{3}){/I_{3,n}}$ is given by 
\begin{equation*}
\frac{\theta_{*}(\pi_{+, *}+\epsilon_{p,1}\pi_{-, *})}{2}(\mathbf{1}_{0}(p))\otimes\frac{\theta_{*}(\pi_{+, *}+\epsilon_{p,2}\pi_{-, *})}{2}(\mathbf{1}_{0}(p))\otimes \frac{\theta_{*}(\pi_{+, *}+\epsilon_{p,3}\pi_{-, *})}{2}(\mathbf{1}_{0}(p))
\end{equation*}
where we abuse the notation and denote by $\theta: X^{B}\rightarrow (X^{B})^{3}$ the diagonal embedding of the Shimura set $X^{B}$.
Since $\pi_{+, *}\mathbf{1}_{0}(p)=\epsilon_{p, i}\pi_{-, *}\mathbf{1}_{0}(p)$ is the constant function on $X^{B}$ with value $p+1$, we have
\begin{equation*}
\begin{aligned}
&(\partial^{(1)}_{p}\Theta^{[p]}_{n}, \phi_{1}\otimes \phi_{2}\otimes \phi_{3})\\
&= (\frac{\theta_{*}(\pi_{+, *}+\epsilon_{p,1}\pi_{-,*})}{2}(\mathbf{1}_{0}(p))\otimes\frac{\theta_{*}(\pi_{+, *}+\epsilon_{p, 2}\pi_{-, *})}{2}(\mathbf{1}_{0}(p))\otimes\frac{\theta_{*}(\pi_{+, *}+\pi_{-, *})}{2}(\mathbf{1}_{0}(p)), \phi_{1}\otimes \phi_{2}\otimes \phi_{3})\\
&=(p+1)^{3}\sum_{z\in X^{B}}\phi_{1}(z)\otimes\phi_{2}(z)\otimes\phi_{3}(z).\\
\end{aligned}
\end{equation*}
The formula is proved. 
\end{proof}
\section{The Bloch-Kato conjecture for the triple product motive}
\subsection{Selmer groups of triple product motive} Let $f=\sum_{n\geq 1}a_{n}(f)q^{n}\in S^{\new}_{2}(\Gamma_{0}(N))$ be a normalized newform of weight $2$. We assume that $N$ admits a factorization $N=N^{+}N^{-}$ such that $(N^{+}, N^{-})=1$ and $N^{-}$ is square-free and is a product of odd number of primes. Let $E=\QQ(f)$ be the Hecke field of $f$ and $\lambda$ be a place of $E$ over $l$. We denote by $E_{\lambda}$ the completion of $E$ at $\lambda$. Let $\calO=\calO_{E_{\lambda}}$ be its ring of integers. To the newform $f$, we can attach a Galois representation
\begin{equation*}
\rho_{f}: G_{\QQ}\rightarrow \mathrm{GL}(\rmV_{f})
\end{equation*}
over $E_{\lambda}$ with determinant the $l$-adic cyclotomic character and satisfying 
\begin{equation*}
\mathrm{tr}(\rho_{f}(\mathrm{Frob}_{p}))=a_{p}(f) \text{ for all } p\nmid N. 
\end{equation*}
Let $\rmT_{f}$ be a Galois stable $\calO$-lattice in $\rmV_{f}$ and for each $n\geq 1$ we put 
\begin{equation*}
\rmT_{f, n}=\rmT_{f}/\varpi^{n}. 
\end{equation*}
Let $\bar{\rho}_{f}$  be the residual representation of $\rho_{f}$. Let $\rmA_{f}=\rmV_{f}/\rmT_{f}$. We set $\rmA_{f, n}=\ker[\rmA_{f}\xrightarrow{\varpi^{n}}\rmA_{f}]$. We denote by $\QQ(\bar{\rho}_{f})$ the field extension of $\QQ$ in ${\QQ}^{\ac}$ cut out by $\bar{\rho}_{f}$. To the newform $f$, we can associate maps $\phi_{f}: \TT\rightarrow \calO$ and $\phi_{f, n}: \TT\rightarrow \calO_{n}$ corresponding to the Hecke eigensystem of $f$. Recall we denote by $I_{f, n}$ the kernel of the map $\phi_{f, n}$.  Let $p$ be an $n$-admissible prime for $f$, by Theorem \ref{level-raise-curve}, we have a morphism $\phi^{[p]}_{f, n}: \TT^{[p]}\rightarrow \calO_{n}$ with kernel $I^{[p]}_{f, n}$. We have $\frakm_{f}=I_{f, 1}$ the maximal ideal in $\TT$ containing $I_{f, n}$ and  $\frakm^{[p]}_{f}=I_{f, 1}$ the maximal ideal in $\TT^{[p]}$ containing $I^{[p]}_{f, n}$

Now we consider a triple $\triplef=(f_{1}, f_{2}, f_{3})$ of newforms in  $S^{\new}_{2}(\Gamma_{0}(N))^{3}$. We will denote the representation $\rmV_{f_{i}}$ simply by $\rmV_{i}$ for $i=1, 2, 3$. They are defined over the $l$-adic Hecke field $E_{\lambda_{i}}$ of $f_{i}$ where $\lambda_{i}$ is a place of $\QQ(f_{i})$ over $l$. Let $\calO_{i}$ be the ring of integers of $E_{\lambda_{i}}$ with a uniformizer $\varpi_{i}$ and we put $\calO_{i, n}=\calO_{i}/\varpi^{n}_{i}$. Similarly as before, we have the $\calO_{i}$-lattice $\rmT_{i}$ and the $\calO_{i, n}$-module $\rmT_{i,n}$. Let $I_{i, n}=I_{f_{i}, n}$ and $\frakm_{i}=\frakm_{f_{i}}$. We have the fields $\QQ(\bar{\rho}_{i}):=\QQ(\bar{\rho}_{f_{i}})$ defined by the residual Galois representations $\bar{\rho}_{f_{i}}$. Let 
\begin{equation*}
\rmV(\triplef)=\rmV_{1}\otimes \rmV_{2}\otimes \rmV_{3}
\end{equation*}
be the triple tensor product representation associated to the triple $\triplef$.  Simarly, we put
\begin{equation*}
\rmM(\triplef)=\rmT_{1}\otimes \rmT_{2}\otimes \rmT_{3} 
\end{equation*}
and
\begin{equation*}
\rmM_{n}(\triplef)=\rmT_{1, n}\otimes \rmT_{2, n}\otimes \rmT_{3, n} .
\end{equation*}
We have the following result about the Galois cohomology of the representation $\rmV(\triplef)(-1)$ which says the representation $\rmV(\triplef)(-1)$ is tamely pure in the sense \cite[Definition 3.3]{Liu-HZ}.
\begin{lemma}
For all places $v\neq l$ of $\QQ$, we have
\begin{equation*}
\rmH^{1}(\QQ_{v}, \rmV(\triplef)(-1))=0.
\end{equation*}
\end{lemma}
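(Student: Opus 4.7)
The plan is to treat the archimedean and finite places separately, and in the finite case to combine Tate local duality with the purity of $\rmV(\triplef)(-1)$ to reduce the vanishing of $\rmH^{1}$ to that of $\rmH^{0}$.

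For $v=\infty$, the local Galois group $\Gal(\CC/\RR)$ has order two while the coefficient ring is $l$-adic with $l\geq 5$ odd, so $\rmH^{1}(\RR,\rmV(\triplef)(-1))=0$ for the trivial reason that the cohomology of a finite group vanishes in positive degrees once its order is invertible in the coefficient ring.

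For a finite place $v\neq l$, the starting observation is that $\rmV(\triplef)(-1)$ is essentially self-dual in the form required by local duality. Since $\det\rho_{f_{i}}=\chi_{\mathrm{cyc}}$ for each $i$, the standard identity for a two-dimensional representation gives $\rmV_{i}^{\vee}\cong\rmV_{i}(-1)$; tensoring the three factors and twisting produces an isomorphism
\begin{equation*}
\rmV(\triplef)(-1)^{\vee}(1)\cong\rmV(\triplef)(-1).
\end{equation*}
Local Tate duality then identifies $\rmH^{2}(\QQ_{v},\rmV(\triplef)(-1))$ with the $E_{\lambda}$-linear dual of $\rmH^{0}(\QQ_{v},\rmV(\triplef)(-1))$, and combined with the vanishing of the local Euler--Poincar\'e characteristic at a place away from $l$, the desired vanishing of $\rmH^{1}$ is reduced to that of $\rmH^{0}(\QQ_{v},\rmV(\triplef)(-1))$.

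To finish, I will invoke weight--monodromy purity for the Galois representation attached to a weight-two modular form: each $\rmV_{i}$ is pure of weight $1$ in the Weil--Deligne sense at $v$, so $\rmV(\triplef)$ is pure of weight $3$ and $\rmV(\triplef)(-1)$ is pure of weight $5$. Examining the monodromy filtration on the inertia invariants, the Frobenius eigenvalues occurring on $\rmV(\triplef)(-1)^{I_{v}}$ are Weil numbers of strictly positive weight, so in particular $1$ cannot be an eigenvalue and $\rmH^{0}=0$. The only step where I expect to be careful is the analysis at the ramified primes $v\mid N$: at such a prime the minimal Frobenius weight on $\rmV_{i}^{I_{v}}$ can be as small as $0$ (the Steinberg case), but after tensoring the three factors and performing the Tate twist by $(-1)$ the minimal weight on $\rmV(\triplef)(-1)^{I_{v}}$ becomes at least $2$, which is still bounded away from zero. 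This gives $\rmH^{0}(\QQ_{v},\rmV(\triplef)(-1))=0$ and hence the lemma.
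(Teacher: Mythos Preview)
Your strategy---local duality plus the local Euler--Poincar\'e formula to reduce to $\rmH^{0}=0$, then weight--monodromy purity---is correct and is precisely the argument the paper has in mind: it simply cites \cite[Lemma~4.6]{Liu-cubic}, whose proof proceeds in the same way, the only adaptation being that purity of the $\rmV_{i}$ is supplied by their realisation in the $\rmH^{1}$ of a Shimura curve rather than of an elliptic curve.

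There is, however, a sign slip in your weight bookkeeping, and fixing it actually makes your case analysis unnecessary. The self-duality $\rmV_{i}^{\vee}\cong\rmV_{i}(-1)$ that you correctly derived from $\det\rho_{f_{i}}=\chi_{\mathrm{cyc}}$ already pins down the weight: if $\rmV_{i}$ is pure of weight $w$ then $\rmV_{i}^{\vee}$ has weight $-w$ while $\rmV_{i}(-1)$ has weight $w+2$, forcing $w=-1$. Hence $\rmV(\triplef)(-1)$ is pure of weight $-1$, not $5$; indeed any $W$ with $W^{\vee}(1)\cong W$ must have weight $-1$. Now for $W$ pure of weight $w$ satisfying weight--monodromy, the Frobenius weights on $W^{I_{v}}\subseteq\ker N$ are all $\le w$ (the bottom of a Jordan block of size $k$ sits in $Gr^{M}_{-(k-1)}$, of weight $w-(k-1)$). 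With $w=-1$ this gives all weights $\le -1<0$ at every finite $v\neq l$, so $1$ is never a Frobenius eigenvalue on $W^{I_{v}}$ and $\rmH^{0}(\QQ_{v},\rmV(\triplef)(-1))=0$ uniformly, without any separate treatment of Steinberg primes. With your claimed weight $5$ the general bound would only give weights $\le 5$, which does not exclude $0$; your explicit Steinberg computation happens to survive the sign flip, but as written the argument would be incomplete at primes $v\mid N^{+}$ where the three local components can mix Steinberg and non-Steinberg behaviour.
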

\begin{proof}
The proof of this lemma is the same as that of \cite[Lemma 4.6]{Liu-cubic} by replacing the the elliptic curve by the Jacobians of suitable Shimura curves. 
\end{proof}

\begin{definition}\label{BK-grp}
The \emph{Bloch-Kato Selmer group} $\rmH^{1}_{f}(\QQ, \rmV(\triplef)(-1))$ is the subspace of classes $s\in \rmH^{1}(\QQ, \rmV(\triplef)(-1))$ such that 
\begin{equation*}
{\rm{loc}}_{l}(s)\in \rmH^{1}_{f}(\QQ_{l}, \rmV(\triplef)(-1)):=\ker[ \rmH^{1}_{f}(\QQ_{l}, \rmV(\triplef)(-1))\rightarrow \rmH^{1}_{f}(\QQ_{l}, \rmV(\triplef)\otimes\rmB_{\mathrm{cris}}(-1))].
\end{equation*}
\end{definition}

Let $(\pi_{1}, \pi_{2}, \pi_{3})$ be the triple of automorphic representation of $\GL_{2}(\mathbf{A})$ associated to the triple $(f_{1}, f_{2}, f_{3})$. Then one can attach the \emph{triple product L-function} 
\begin{equation*}
L(f_{1}\otimes f_{2}\otimes f_{3}, s)=L(s-\frac{3}{2}, \pi_{1}\otimes\pi_{2}\otimes \pi_{3}, r)
\end{equation*}
where $L(s-\frac{3}{2}, \pi_{1}\otimes\pi_{2}\otimes \pi_{3}, r)$ is the Langlands $L$-function attached to $r$ which is the natural eight-dimensional representation of the $L$-group of $\GL_{2}\times\GL_{2}\times\GL_{2}$. We will be concerned with case when \emph{global root number} 
\begin{equation*}
\epsilon(\pi_{1}\otimes\pi_{2}\otimes\pi_{3}, r)=1
\end{equation*}
that is the order of vanishing of the triple product $L$-function $L(f_{1}\otimes f_{2}\otimes f_{3}, s)$ at the central critical point $s=2$ is even. The following proposition relates $L(f_{1}\otimes f_{2}\otimes f_{3}, 2)$ to certain explicit period integral appeared in the statement of Theorem \ref{recip}.
\begin{proposition}\label{period}
Let $(f^{B}_{1}, f^{B}_{2}, f^{B}_{3})\in S^{B}_{2}(N^{+}, \calO)$ be the Jacquet-Langlands transfer of the triple $(f_{1}, f_{2}, f_{3})$. If the value $L(f_{1}\otimes f_{2}\otimes f_{3}, 2)$ is non-zero then 
\begin{equation}
I(f^{B}_{1}, f^{B}_{2}, f^{B}_{3})=\sum_{z\in X^{B}} f^{B}_{1}(z)\otimes f^{B}_{2}(z)\otimes f^{B}_{3}(z)
\end{equation} 
is non-zero.  
\end{proposition}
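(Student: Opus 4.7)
The plan is to invoke the resolution of Jacquet's conjecture by Harris-Kudla \cite{KH91}, which directly relates the non-vanishing of a global trilinear automorphic period on the appropriate quaternion algebra to that of the central critical value $L(f_1\otimes f_2\otimes f_3, 2)$. First I would rewrite the finite sum $I(f_1^B, f_2^B, f_3^B)$ as an automorphic period: since $B$ is definite the Shimura set $X^B$ is finite, and up to a non-zero volume constant the sum equals the trilinear period integral
$$\int \phi_1(g)\phi_2(g)\phi_3(g)\,dg$$
taken over $G(\QQ)\backslash G(\Adel)$ modulo center, where $\phi_i$ is the adelization of $f_i^B$ viewed as an automorphic form on $G(\Adel) = B^\times(\Adel)$.

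Second, I would verify the hypotheses of the Harris-Kudla theorem. The Jacquet-Langlands transfers $\pi_i^B$ exist because each $\pi_{i,v}$ is discrete series at $v=\infty$ (by the weight $2$ assumption) and Steinberg at $v\mid N^-$ (by the newness-at-$N^-$ hypothesis), and these are exactly the places where $B$ is ramified. Prasad's local dichotomy together with the global sign factorization pins down the unique quaternion algebra over $\QQ$ on which the global trilinear form is not identically zero: its ramified set is precisely $\{v : \epsilon_v = -1\}$. The standing assumption $\prod_{v\mid N}\epsilon_v = -1$, the equality $\epsilon_\infty = -1$ (balanced weights), and the explicit formulas for local epsilon factors recalled in \cite[\S 1.3]{GK92} at primes $v\mid N^+$ (principal series components) and $v\mid N^-$ (Steinberg components) identify this algebra with our definite $B$ of discriminant $N^-$. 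Consequently, Harris-Kudla yields that the global trilinear period does not vanish identically on $\pi_1^B\otimes\pi_2^B\otimes\pi_3^B$ whenever $L(f_1\otimes f_2\otimes f_3, 2)\neq 0$.

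Finally, one must check that the specific choice of test vectors $f_i^B$ does not make the period vanish on this particular triple. This is the test vector problem. At places $v\nmid N$ the $f_{i,v}^B$ are unramified spherical vectors and the local trilinear integral is computed by the unramified Ichino formula to be a non-zero multiple of a local $L$-factor; at $v\mid N^-$ the local representation $\pi_{i,v}^B$ is one-dimensional (a quadratic character), so $f_{i,v}^B$ is a non-zero constant and the local pairing is explicit and non-zero; at $v\mid N^+$ the new vector for the Eichler order of level $N^+$ is known to be a good test vector for the local trilinear Hom space. I expect this last verification at primes $v\mid N^+$ to be the main technical point, but it is standard in the Ichino-Ikeda theory and is part of the background for the constructions in \cite{Hsieh}. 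Combining these three steps yields $I(f_1^B, f_2^B, f_3^B)\neq 0$ whenever $L(f_1\otimes f_2\otimes f_3, 2)\neq 0$.
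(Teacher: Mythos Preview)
Your approach is essentially the same as the paper's: both invoke the Harris--Kudla resolution of Jacquet's conjecture \cite{KH91} to deduce non-vanishing of the trilinear period from non-vanishing of the central $L$-value. The paper's proof is a bare citation to \cite{KH91}, supplemented by references to \cite{GK92} and \cite{Ichino} for the refined formulas; you have unpacked the argument more carefully, in particular isolating the test vector problem at primes dividing $N^{+}$, which the paper leaves implicit in its citation of the Ichino-type literature.
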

\begin{proof}
This follows from the main result of \cite{KH91} which resolves a conjecture of Jacquet. See also \cite{GK92} and \cite{Ichino} for refined formulas relating
the central critical values of $L(f_{1}\otimes f_{2}\otimes f_{3}, 2)$ to the period integral $I(f^{B}_{1}, f^{B}_{2}, f^{B}_{3})$. 
\end{proof}

We make the following conjecture for the motive attached to the triple $\triplef=(f_{1}, f_{2}, f_{3})$ that we hope to address in a future work.

\begin{conjecture}\label{main-conj}
Suppose that the triple $\triplef=(f_{1}, f_{2}, f_{3})$ satisfies the following assumptions: for each $i=1,2, 3$
\begin{enumerate}
\item the maximal ideals $\frakm_{i}$ are all residually irreducible;
\item the $\TT_{\frakm_{i}}$-module $S^{B}_{2}(N^{+}, \calO_{i})_{\frakm_{i}}$ is free of rank $1$;
\item the residual Galois representations $\bar{\rho}_{i}$ are surjective and the fields $\QQ(\bar{\rho}_{i})$ are linearly disjoint. 
\end{enumerate}
If the central critical value $L(f_{1}\otimes f_{2}\otimes f_{3}, 2)$ is non-zero, then the Bloch-Kato Selmer group vanishes
\begin{equation*}
\rmH^{1}_{f}(\QQ, \rmV(\triplef)(-1))=0. 
\end{equation*}
\end{conjecture}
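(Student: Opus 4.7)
The plan is to run a Kolyvagin-style bound on the Selmer group driven by $n$-admissible primes, with the Gross-Schoen diagonal class $\Theta^{[p]}_n$ (and its companions to be produced in \cite{Wang}) playing the role of an Euler system. I first reduce to a mod $\varpi^n$ statement: it suffices to show that for every $n \geq 1$ the discrete Selmer group $\rmH^{1}_{f}(\QQ, \rmM_n(\triplef)(-1))$ is trivial, and by the usual limit argument it is enough to derive a contradiction from the existence of any non-zero class $\kappa$ in it.

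The next step is to produce an $n$-admissible prime $p$ for $\triplef$ at which the localization $\mathrm{loc}_p(\kappa)$ is detectable. Assumption $(3)$ (surjectivity of each $\bar{\rho}_i$ together with the linear disjointness of the fields $\QQ(\bar{\rho}_i)$) allows one to apply the Chebotarev density theorem to an extension cut out jointly by $\rmM_n(\triplef)(-1)$ and by the cocycle representing $\kappa$, producing a positive density of $n$-admissible primes with $\mathrm{loc}_p(\kappa)\neq 0$, exactly as in \cite[\S 4]{BD-Main} and \cite[\S 4]{Liu-cubic}. By the tame purity of $\rmV(\triplef)(-1)$ established above, $\mathrm{loc}_p(\kappa)$ lies in $\rmH^{1}_{\mathrm{fin}}(\QQ_p, \rmM_n(\triplef)(-1))$, which is Tate-dual to $\rmH^{1}_{\mathrm{sing}}(\QQ_p, \rmM_n(\triplef)(-1))$. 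Global reciprocity (Poitou-Tate) then forces
\begin{equation*}
\langle \mathrm{loc}_p(\kappa),\, \partial_p \Theta^{[p]}_n \rangle_p = 0,
\end{equation*}
the contributions at places $v\neq p,l$ vanishing by tame purity and the contribution at $l$ vanishing on crystalline grounds. Combining Theorem \ref{recip} with Proposition \ref{period}, the non-vanishing of $L(f_1\otimes f_2\otimes f_3,2)$ ensures that $\partial_p\Theta^{[p]}_n$ is a non-zero element of the singular quotient, so the local class $\mathrm{loc}_p(\kappa)$ is constrained to annihilate it.

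The genuine obstacle, and the reason the statement is left as a conjecture, is that by Corollary \ref{main-coro} the singular quotient is free of rank $3$ over $\calO_n$, while Theorem \ref{recip} shows that the three projections $\partial^{(j)}_p \Theta^{[p]}_n$ all coincide and therefore $\partial_p \Theta^{[p]}_n$ only generates the diagonal rank-$1$ submodule of $\oplus_{j=1}^{3}\bigl(\otimes_{i=1}^{3} S^{B}_{2}(N^{+}, \calO_i)/I_{i,n}\bigr)$. The single diagonal cycle only pins down one coordinate of $\mathrm{loc}_p(\kappa)$ and leaves two directions uncontrolled. To close the argument I would need to construct two further global classes $\Theta^{[p],(2)}_n$, $\Theta^{[p],(3)}_n$ whose singular residues land in the remaining two direct summands and which satisfy analogues of the first reciprocity law. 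As hinted in the introduction and modeled on the work of Darmon-Rotger \cite{DR-2}, a natural source is a $p$-adic deformation of $\Theta^{[p]}_n$ along Hida families of the three factors, where varying weights separate the three summands of the singular quotient.

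Once such a triple of classes is in hand, the Euler system argument can be closed: combining the first reciprocity law of the present paper with the second reciprocity law of the companion paper \cite{Wang} and iterating the Chebotarev-plus-duality construction over varying $n$-admissible primes (in the style of the bipartite Euler system of \cite{How}) forces $\kappa = 0$, hence the vanishing of the Selmer group. The main obstacle is therefore not the Kolyvagin machinery itself, which adapts routinely from \cite{BD-Main}, \cite{Liu-HZ}, \cite{Liu-cubic}, but rather the construction of the two missing companion classes and the verification that their singular residues together with $\partial_p\Theta^{[p]}_n$ span $\rmH^{1}_{\mathrm{sing}}(\QQ_p, \rmM_n(\triplef)(-1))$.
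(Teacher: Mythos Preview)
This statement is a \emph{conjecture} in the paper, not a theorem; the paper offers no proof and explicitly says it ``hope[s] to address [it] in a future work.'' So there is no proof in the paper to compare your proposal against.

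That said, your analysis of the obstruction is accurate and matches the paper's own diagnosis in the Remark following Lemma~\ref{p-lower}: the singular quotient at an $n$-admissible prime is free of rank $3$, while the diagonal cycle class $\Theta^{[p]}_{n}$ only produces a single direction (your observation that the three projections $\partial^{(j)}_{p}\Theta^{[p]}_{n}$ coincide is exactly the point). The paper likewise conjectures the existence of three global classes filling the singular quotient and satisfying analogous reciprocity laws, and notes that the standard Euler--Kolyvagin machinery would then apply. Your suggestion of Hida-family deformations as a source for the missing classes (in analogy with \cite{DR-2}) is reasonable speculation, but the paper does not commit to any specific construction; it only records the expectation that such classes should exist. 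In short: your proposal is not a proof but a correct outline of the intended strategy and its current gap, and it is in agreement with what the paper itself says.
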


Note that the assumptions made in the conjecture guarantees us that there are abundance of $n$-admissible primes for $\triplef$. 
\begin{lemma}\label{infty-adm}
Under the assumption of Theorem \ref{main-conj}, there are infinitely many $n$-admissible primes $p$ for $\triplef$.
\end{lemma}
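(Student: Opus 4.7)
The plan is a Chebotarev density argument applied to a suitable finite Galois extension of $\QQ$. The target is to produce an element $\sigma\in G_{\QQ}$ such that the image of $\sigma$ under $\rho_i$ modulo $\varpi_i^n$ is conjugate to $\mathrm{diag}(1,\chi_{\mathrm{cyc}}(\sigma))$ for each $i=1,2,3$, and with $\chi_{\mathrm{cyc}}(\sigma)\not\equiv\pm 1\pmod{l}$. Any prime $p$ with $\Frob_p$ in the conjugacy class of $\sigma$ in the finite extension cut out by these representations will then be $n$-admissible for $\triplef$: condition (1) holds after excluding the finitely many primes dividing $Nl$; condition (2) holds since $p\equiv\chi_{\mathrm{cyc}}(\sigma)\not\equiv\pm 1\pmod{l}$; condition (3) holds with $\epsilon_{p,i}=1$ since $a_p(f_i)=\mathrm{tr}\,\rho_i(\Frob_p)\equiv 1+p\pmod{\varpi_i^n}$ for each $i$; and condition (4) is automatic.

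To produce such a $\sigma$, I would first upgrade the surjectivity of $\bar{\rho}_i$ onto $\GL_2(\FF_i)$ (where $\FF_i$ is the residue field of $\calO_i$) to surjectivity of $\rho_i\pmod{\varpi_i^n}$ onto $\GL_2(\calO_i/\varpi_i^n)$. This is a standard lifting argument using that $\SL_2(\FF_i)$ is perfect for $l\geq 5$, so that any closed subgroup of $\GL_2(\calO_i/\varpi_i^n)$ surjecting onto $\GL_2(\FF_i)$ must be the whole group. Next, combining this with the linear disjointness hypothesis on the fields $\QQ(\bar{\rho}_i)$, a Goursat-type argument identifies the image of the product representation $(\rho_1,\rho_2,\rho_3)$ modulo $(\varpi_1^n,\varpi_2^n,\varpi_3^n)$ with the fibered product of the three $\GL_2(\calO_i/\varpi_i^n)$ over the common determinant $\chi_{\mathrm{cyc}}\pmod{l^n}$. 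Since $l\geq 5$, one can choose $a\in(\ZZ/l^n)^\times$ with $a\not\equiv\pm 1\pmod{l}$, and by the largeness of the image there exists $\sigma\in G_{\QQ}$ with $\chi_{\mathrm{cyc}}(\sigma)\equiv a$ and with $\rho_i(\sigma)$ in the conjugacy class of $\mathrm{diag}(1,a)$ modulo $\varpi_i^n$ for every $i$.

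Chebotarev then supplies infinitely many primes $p$ with the required Frobenius class, and the verification of (1)--(4) is immediate from the construction. The main obstacle is the Goursat/largeness step: the three residual representations all have determinant $\chi_{\mathrm{cyc}}$, so ``linearly disjoint'' must be read in the appropriate fibered sense (e.g.\ that the fields $\QQ(\bar{\rho}_i)$ are linearly disjoint over $\QQ(\mu_l)$, or equivalently that the natural map from $G_{\QQ}$ to the fibered product of the $\bar{\rho}_i(G_{\QQ})$ over their common determinant is surjective). One must verify that this is precisely what hypothesis (3) of Conjecture \ref{main-conj} furnishes; given that, everything else is a routine Chebotarev calculation.
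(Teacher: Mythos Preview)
Your proposal is correct and takes essentially the same approach as the paper: Chebotarev density applied to the product of the mod-$\varpi_i^n$ representations, using surjectivity of each $\bar{\rho}_i$ together with the linear-disjointness hypothesis to hit the desired Frobenius class. The paper's proof is terser and allows arbitrary signs $\epsilon_i$ with $\epsilon_1\epsilon_2\epsilon_3=1$, whereas you specialize to $\epsilon_{p,i}=1$ for all $i$; but this is harmless since that special case already yields infinitely many $n$-admissible primes. Your discussion of the Goursat step and the need to read ``linearly disjoint'' over $\QQ(\mu_l)$ (because all three determinants are the cyclotomic character) is a valid point that the paper leaves implicit.
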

\begin{proof}
Let $\rho_{i, n}: G_{\QQ}\rightarrow \GL_{2}(\calO_{i, n})$ be the representation on $\rmT_{i, n}$ defined by reducing $\rmT_{i}$ modulo $\varpi_{i}^{n}$. Consider the direct sum representation 
\begin{equation*}
\rho_{1, n}\oplus \rho_{2, n}\oplus \rho_{3, n}: G_{\QQ}\rightarrow \GL_{2}(\calO_{1, n})\times \GL_{2}(\calO_{2, n})\times \GL_{2}(\calO_{3, n}).
\end{equation*}
Since $\bar{\rho}_{i}$ is surjective and the fields $\QQ(\bar{\rho}_{i})$ are linearly disjoint, we can find infinitely many primes $p$ such that 
\begin{equation*}
\rho_{1, n}\oplus \rho_{2, n}\oplus \rho_{3, n}(\Frob_{p})=\begin{pmatrix}\epsilon_{1}p&0\\0&\epsilon_{1}\\ \end{pmatrix}\times \begin{pmatrix}\epsilon_{2}p&0\\0&\epsilon_{2}\\ \end{pmatrix}\times \begin{pmatrix}\epsilon_{3}p&0\\0&\epsilon_{3}\\ \end{pmatrix}\in \GL_{2}(\calO_{1, n})\times \GL_{2}(\calO_{2, n})\times \GL_{2}(\calO_{3, n})
\end{equation*}
with $l\nmid p^{2}-1$ and $\epsilon_{1}\epsilon_{2}\epsilon_{3}=1$ by the Chebotarev density theorem. By definition one can check that $p$ is an $n$-admissible prime for $(f_{1}, f_{2}, f_{3})$. 
\end{proof}

For $i=1, 2, 3$, recall that  $\rmT^{[p]}_{i}=\rmH^{1}(X_{\QQ^{\ac}}, \calO_{i}(1))_{\mathfrak{m}^{[p]}_{i}}$
and $\rmT^{[p]}_{i, n}= \rmT^{[p]}_{i}/I^{[p]}_{i, n}$.  We have the following lemma which we have already implicitly used in Lemma \ref{3-fin}. 

\begin{lemma}\label{p-lower}
Let $p$ be an $n$-admissible prime for $\triplef$. There is an isomorphism of $G_{\QQ}$-representations
\begin{equation*}
\rmT^{[p]}_{i, n}\cong \rmT_{i,n}.
\end{equation*}
\end{lemma}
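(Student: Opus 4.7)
The plan is to identify both $\rmT^{[p]}_{i,n}$ and $\rmT_{i,n}$ as free rank-$2$ $\calO_{i,n}$-modules carrying continuous $G_{\QQ}$-actions that share the same trace of Frobenius on a Chebotarev-dense subset, and then to deduce the isomorphism from the rigidity of deformations of the absolutely irreducible residual representation $\bar{\rho}_{i}$.

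First I would verify rank-$2$ freeness on both sides. For $\rmT_{i,n}$ this is the standard consequence of the Eichler-Shimura construction combined with the absolute irreducibility of $\bar{\rho}_{i}$ (mod-$l$ multiplicity one). For $\rmT^{[p]}_{i,n}$ I would invoke the hypothesis that $S^{B}_{2}(N^{+}, \calO_{i})_{\frakm_{i}}$ is free of rank $1$ over $\TT_{\frakm_{i}}$ together with the ramified level-raising established in Theorem \ref{level-raise-curve}: as already observed in the proof of Lemma \ref{comp-char}, the $p$-adic uniformization of the Jacobian $\mathrm{Jac}(X_{K})$ together with the identification of the component groups forces $\mathrm{Ta}_{l}(J)/I^{[p]}_{i,n}$ to be free of rank $2$ over $\calO_{i,n}$, and up to the Tate twist this is precisely $\rmT^{[p]}_{i,n}$.

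Next I would compare the Galois actions. By construction $\frakm^{[p]}_{i}$ and $\frakm_{i}$ carry the same residual Hecke eigensystem, so the residual representations on $\rmT^{[p]}_{i,n}/\varpi_{i}$ and $\rmT_{i,n}/\varpi_{i}$ are both isomorphic to $\bar{\rho}_{i}$. For every prime $q \nmid Npl$ the Eichler-Shimura relations on the Shimura curve $X$ and on the modular curve of level $N$ yield
\begin{equation*}
\Tr(\mathrm{Frob}_{q} \mid \rmT^{[p]}_{i,n}) = \phi^{[p]}_{i,n}(T_{q}) = \phi_{i,n}(T_{q}) = \Tr(\mathrm{Frob}_{q} \mid \rmT_{i,n}) \pmod{\varpi_{i}^{n}},
\end{equation*}
and the determinants both equal the mod $\varpi_{i}^{n}$ cyclotomic character. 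Chebotarev density therefore implies that the two pseudo-characters $G_{\QQ} \to \calO_{i,n}$ agree, and since $\bar{\rho}_{i}$ is absolutely irreducible, pseudo-character rigidity (in the spirit of Carayol and Nyssen) upgrades this to an isomorphism $\rmT^{[p]}_{i,n} \cong \rmT_{i,n}$ of $\calO_{i,n}[G_{\QQ}]$-modules.

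The main obstacle is the first step: one has to transport the rank-$1$ freeness of the definite quaternionic module $S^{B}_{2}(N^{+}, \calO_{i})_{\frakm_{i}}$ over $\TT_{\frakm_{i}}$ across Jacquet-Langlands into a rank-$2$ freeness statement for the $l$-adic cohomology of $X$ at the raised discriminant $pN^{-}$. The necessary ingredients are already present in Proposition \ref{component-grp}, Lemma \ref{comp-char} and the Cerednick-Drinfeld uniformization, but spelling them out requires some care; once this input is in place, the comparison of Galois actions becomes a formal application of Chebotarev and pseudo-character rigidity.
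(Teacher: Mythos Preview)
Your proposal is correct and is essentially a spelling-out of the argument the paper invokes: the paper's proof simply cites \cite[Theorem 5.17]{BD-Main}, whose proof proceeds exactly along the lines you describe (rank-$2$ freeness via multiplicity one and $p$-adic uniformization, matching of Frobenius traces via Eichler--Shimura, and then Chebotarev plus irreducibility of $\bar{\rho}_{i}$ to conclude). Your remark that the freeness step requires transporting the rank-$1$ hypothesis across Jacquet--Langlands is accurate and is precisely what is done in the proof of \cite[Theorem 5.17]{BD-Main} referenced here.
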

\begin{proof}
This follows from the same proof given for \cite[Theorem 5.17]{BD-Main}. 
\end{proof}
\begin{remark}
This Lemma implies that we have an isomorphism $\rmM^{[p]}_{n}(\triplef)\cong \rmM_{n}(\triplef)$ of $G_{\QQ}$-modules. Under the assumption of Conjecture \ref{main-conj}, in particular $L(f_{1}\otimes f_{2} \otimes f_{3}, s)$ is non-vanishing at $s=2$, the period integral $I(f^{B}_{1}, f^{B}_{2}, f^{B}_{3})$ is non-vanishing by Proposition \ref{period} . Let $p$ be an $n$-admissible prime for $\triplef$, the cohomology class $\Theta^{[p]}_{n}$ has the property that $\partial_{p}\Theta^{[p]}_{n}$ is non-zero in $\rmH^{1}_{\sing}(\QQ_{p}, \rmM^{[p]}_{n}(\triplef)(-1))\cong\rmH^{1}_{\sing}(\QQ_{p}, \rmM_{n}(\triplef)(-1))$ for some $n$ by the first reciprocity law in Theorem \ref{recip}. Therefore the class $\Theta^{[p]}_{n}$ could be viewed as an annihilator of the Selmer group.  However as we have seen in Lemma \ref{3-fin}, the singular quotient at $p$ is of rank $3$ and therefore the class $\Theta^{[p]}_{n}$ alone can not fill up the whole singular quotient and therefore is not enough to bound the Selmer group. However we conjecture here that there exist three global cohomology classes in $\rmH^{1}(\QQ, \rmM_{n}(\triplef)(-1))$ which are intimately related to $\Theta^{[p]}_{n}$ and satisfy a similar reciprocity law as in Theorem \ref{recip}. Using these three classes, one can indeed prove Conjecture \ref{main-conj}.
\end{remark}
\subsection{The symmetric cube motive} We specialize the discussions in this article to the case when $\triplef=(f_{1}, f_{2}, f_{3})=(f, f, f)$ for a single modular form $f$. In this case we have a factorization
\begin{equation*}
\rmV_{f}^{\otimes 3}(-1)= \mathrm{Sym}^{3} \rmV_{f}(-1)\oplus \rmV_{f}\oplus \rmV_{f}
\end{equation*}
where we will refer to $\mathrm{Sym}^{3}(\rmV_{f})(-1)$ as the \emph{symmetric cube component} of $\rmV_{f}^{\otimes 3}(-1)$. Corresponding to this factorization, we have a factorization of the $L$-function
\begin{equation*}
L(f\otimes f \otimes f, s)= L(\mathrm{Sym}^{3}f, s) L(f, s-1)^{2}.
\end{equation*}
We can define the Bloch-Kato Selmer group 
\begin{equation*}
\rmH^{1}_{f}(\QQ, \mathrm{Sym}^{3} \rmV_{f}(-1))
\end{equation*}
for $\mathrm{Sym}^{3} \rmV_{f}(-1)$ exactly the same way as we did for the triple product representation in Definition \ref{BK-grp}. We will prove the following result towards the rank $0$ case of the Bloch-Kato conjecture for $\mathrm{Sym}^{3} \rmV_{f}(-1)$. 

\begin{theorem}\label{main-symm}
Suppose that the modular form $f$ satisfies the following assumptions: 
\begin{enumerate}
\item the maximal ideals $\frakm_{f}$ are all residually irreducible;
\item the $\TT_{\frakm_{f}}$-module $S^{B}_{2}(N^{+}, \calO)_{\frakm_{f}}$ is free of rank $1$;
\item  $\bar{\rho}_{f}$ is surjective;
\item  the value $L(f, 1)$ is non-vanishing.
\end{enumerate}
If the central critical value $L(\mathrm{Sym}^{3}f, 2)$ is non-zero,  then the Bloch-Kato Selmer group 
\begin{equation*}
\rmH^{1}_{f}(\QQ, \mathrm{Sym}^{3} \rmV_{f}(-1))=0. 
\end{equation*}
\end{theorem}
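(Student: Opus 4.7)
\textbf{Proof proposal for Theorem \ref{main-symm}.} The plan is to run a Kolyvagin-style Euler system argument on the symmetric cube component of the Gross--Schoen diagonal cycle classes. Set $\triplef = (f,f,f)$. Since $\bar\rho_f$ is surjective and hypotheses (1)--(2) hold, the hypotheses of Theorem \ref{arithmetic-level-raising} and Theorem \ref{recip} are in force for this triple. It suffices to show, for every $n \geq 1$, that the Selmer group $\rmH^1_f(\QQ, \Sym^3 \rmT_f(-1)/\varpi^n)$ is annihilated by a fixed power of $\varpi$ independent of $n$; passing to the limit then yields the vanishing of $\rmH^1_f(\QQ, \Sym^3 \rmV_f(-1))$.

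The first step is to produce a well-behaved annihilator class. Under the decomposition $\rmV_f^{\otimes 3}(-1) = \Sym^3 \rmV_f(-1) \oplus \rmV_f \oplus \rmV_f$, the module $\rmM^{[p]}_n(\triplef)(-1)$ splits accordingly, and I take $\Theta^{\diamond[p]}_n \in \rmH^1(\QQ, \Sym^3 \rmT_{f,n}(-1))$ to be the projection of $\Theta^{[p]}_n$ onto the symmetric cube summand, where $p$ ranges over $(n,1)$-admissible primes for $f$ (these satisfy $\epsilon_{p,i}=1$ for $i=1,2,3$, and are abundant by Chebotarev as in Lemma \ref{infty-adm}). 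The key analytic input is that $\partial_p \Theta^{\diamond[p]}_n$ generates the rank-one $\calO_n$-module $\rmH^1_{\sing}(\QQ_p, \Sym^3 \rmT_{f,n}(-1))$. This follows by projecting the first reciprocity law (Theorem \ref{recip}) onto the symmetric cube factor: the right-hand side becomes $(p+1)^3 \sum_{z \in X^B} f^B(z)^3$, which by Proposition \ref{period} and the factorization $L(f\otimes f\otimes f, s) = L(\Sym^3 f, s)\, L(f, s-1)^2$ is $\varpi$-adically non-zero thanks to the twin hypotheses $L(\Sym^3 f, 2) \neq 0$ and $L(f, 1) \neq 0$. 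After scaling by a fixed power of $\varpi$ absorbing the factor $(p+1)^3$ and the period integral, the singular residue $\partial_p \Theta^{\diamond[p]}_n$ generates the singular quotient for all sufficiently large $n$.

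The second step is the Euler system contradiction. Suppose $s \in \rmH^1_f(\QQ, \Sym^3 \rmT_{f,n}(-1))$ is non-zero; using surjectivity of $\bar\rho_f$ (which makes $\Sym^3 \bar\rho_f$ absolutely irreducible with sufficiently large image for Chebotarev arguments), a standard density argument produces an $(n,1)$-admissible prime $p$ for $f$ such that $\loc_p(s)$ is non-zero in the finite part $\rmH^1_{\mathrm{fin}}(\QQ_p, \Sym^3 \rmT_{f,n}(-1))$ and in fact generates it as an $\calO_n$-module. I then apply Poitou--Tate global duality to the pair $(s, \Theta^{\diamond[p]}_n)$: the sum over places of the local Tate pairings vanishes. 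For places $v \nmid pl\infty$, both classes are unramified (and the tamely pure lemma handles bad primes dividing $N$), so these terms vanish. At $l$, the class $\Theta^{\diamond[p]}_n$ lies in the image of the $l$-adic Abel--Jacobi map on a smooth proper variety with good reduction at $l$ (since $l \nmid Np$), hence by Nekov\'a\v{r}'s results it is crystalline and lies in $\rmH^1_f(\QQ_l, \Sym^3 \rmT_{f,n}(-1))$; local duality then forces the $l$-local pairing with $s \in \rmH^1_f$ to vanish. The global reciprocity then forces $\langle \loc_p(s), \partial_p \Theta^{\diamond[p]}_n \rangle_p = 0$, contradicting that both factors are generators of free rank-one $\calO_n$-modules paired perfectly.

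The hard part will be the Chebotarev step producing the prime $p$ with $\loc_p(s)$ generating the finite part, which requires controlling the image of $s$ in the Galois cohomology of the field cut out by $\Sym^3 \bar\rho_f$ and an auxiliary extension trivializing $s$ modulo $\varpi$; this is standard but technically delicate. A secondary issue is to verify the crystallinity at $l$ of $\Theta^{\diamond[p]}_n$ under the semistable reduction of $\mathfrak{X}^3$ at $p$, but since $l \neq p$ the local argument at $l$ only sees smooth proper cohomology and is classical. The rank-one structure of the singular quotient for the symmetric cube factor is what makes the argument succeed here, in contrast to the full triple product case where the rank-three singular quotient precludes a direct application.
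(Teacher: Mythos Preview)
Your proposal is correct and follows essentially the same Euler--Kolyvagin system argument as the paper's proof. One minor imprecision: at primes $v\mid N$ the local pairings do not vanish outright but are merely annihilated by a fixed power $\varpi^{n_N}$ (via the tamely-pure lemma you cite), so the paper works with $\varpi^{n_N}s$ and chooses $n>n_I+n_N$ to force the contradiction; your sketch elides this bookkeeping but clearly has the right ingredient in hand.
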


\begin{remark}
Let $f^{B}$ be the Jacquet-Langlands transfer of $f$ to $S^{B}_{2}(N^{+}, \calO)$. Then $(4)$ in the assumptions of the above theorem implies that the period integral $I(f^{B}, f^{B}, f^{B})$ is non-vanishing if $L(\mathrm{Sym}^{3}f, 2)$ is non-vanishing. 
\end{remark}

\subsection{Proof of Theorem \ref{main-symm}}
We will prove the theorem in this subsection. We use the following set of notations 
\begin{itemize}
 \item $\mathrm{N}^{\diamond}(\triplef)(-1)=\mathrm{Sym}^{3}\rmA_{f}(-1)$;
 \item $\mathrm{N}^{\diamond}_{n}(\triplef)(-1)=\mathrm{Sym}^{3}\rmA_{f, n}(-1)$;
 \item $\rmM^{\diamond}(\triplef)(-1)=\mathrm{Sym}^{3}\rmT_{f}(-1)$;
 \item $\rmM^{\diamond}_{n}(\triplef)(-1)=\mathrm{Sym}^{3}\rmT_{f, n}(-1)$.
\end{itemize}
We need to slightly modify the notion for $n$-admissible primes for $f$ in this case to incorporate the sign change phenomenon in the triple product setting. 
\begin{definition}\label{n-adm}
Let $n\geq 1$ be an integer, a prime $p$ is \emph{$(n, 1)$-admissible} for $f$ if 
\begin{enumerate}
\item $p\nmid Nl$
\item $l\nmid p^{2}-1 $
\item $\varpi^{n}\mid p+1-\epsilon_{p}(f)a_{p}(f)$ with $\epsilon_{p}(f)=1$. 
\end{enumerate}
\end{definition}
It is easy to see that under the assumptions in Theorem \ref{main-symm}, there are infinitely many $(n, 1)$-admissible prime for $f$ following the proof of Lemma \ref{infty-adm}. The $G_{\QQ}$-equivariant pairing 
\begin{equation*}
\mathrm{N}^{\diamond}_{n}(\triplef)(-1)\times \rmM^{\diamond}_{n}(\triplef)(-1)\rightarrow \calO_{n}(1)
\end{equation*}
induces for each place $v$ of $\QQ$ a local Tate duality
\begin{equation*}
(\hphantom{a}, \hphantom{b})_{v}: \rmH^{1}(\QQ_{v}, \rmN^{\diamond}_{n}(\triplef)(-1))\times \rmH^{1}(\QQ_{v}, \rmM^{\diamond}_{n}(\triplef)(-1)) \rightarrow \rmH^{1}(\QQ_{v}, \calO_{n}(1))\cong \calO_{n}.
\end{equation*}
For $s\in  \rmH^{1}(\QQ, \rmN^{\diamond}_{n}(\triplef)(-1))$ and $t\in  \rmH^{1}(\QQ, \rmM^{\diamond}_{n}(\triplef)(-1))$, we will write the pairing $(s, t)_{v}$ instead of $(\loc_{v}(s), \loc_{v}(t))_{v}$.  Let 
\begin{equation*}
\rmH^{1}_{f}(\QQ_{l}, \rmM^{\diamond}(-1))
\end{equation*}
be the pullback of  $\rmH^{1}_{f}(\QQ_{l}, \mathrm{Sym}^{3}\rmV_{f}(-1))$ under the natural map 
$$\rmH^{1}(\QQ_{l}, \rmM^{\diamond}(\triplef)(-1))\rightarrow \rmH^{1}(\QQ_{l}, \mathrm{Sym}^{3}\rmV_{f}(-1)).$$
We define 
\begin{equation*}
\rmH^{1}_{f}(\QQ_{l}, \rmM^{\diamond}_{n}(\triplef)(-1))
\end{equation*}
to be the reduction of $\rmH^{1}(\QQ_{l}, \rmM^{\diamond}(\triplef)(-1))$ modulo $\varpi^{n}$. Similarly, we let 
\begin{equation*}
\rmH^{1}_{f}(\QQ_{l}, \rmN^{\diamond}(\triplef)(-1))
\end{equation*}
be the image of $\rmH^{1}_{f}(\QQ_{l}, \mathrm{Sym}^{3}\rmV_{f}(-1))$ in $\rmH^{1}(\QQ_{l}, \rmN^{\diamond}(\triplef)(-1))$. Then we define 
\begin{equation*}
\rmH^{1}_{f}(\QQ_{l}, \rmN^{\diamond}_{n}(\triplef)(-1))
\end{equation*}
to be the pullback of 
$\rmH^{1}_{f}(\QQ_{l}, \rmN^{\diamond}(\triplef)(-1))$
under the natural map 
\begin{equation*}
\rmH^{1}(\QQ_{l}, \rmN^{\diamond}_{n}(\triplef)(-1))\rightarrow \rmH^{1}(\QQ_{l}, \rmN^{\diamond}(\triplef)(-1)). 
\end{equation*}

\begin{lemma}\label{sel-pairing}
We have the following statements.
\begin{enumerate}
\item The sum $\sum_{v}(\hphantom{a},\hphantom{b})_{v}$ restricted to $\rmH^{1}(\QQ, \rmN^{\diamond}_{n}(\triplef)(-1))\times \rmH^{1}(\QQ, \rmM^{\diamond}_{n}(\triplef)(-1))$ is trivial. Here $v$ runs through all the places in $\QQ$. 
\item  For every $v\neq l$, there exists an integer $n_{v}\geq 1$, independent of $n$ such that the image of the pairing 
\begin{equation*}
(\hfill, \hfill)_{v}: \rmH^{1}(\QQ_{v}, \rmN^{\diamond}_{n}(\triplef)(-1))\times \rmH^{1}(\QQ_{v}, \rmM^{\diamond}_{n}(\triplef)(-1)) \rightarrow \rmH^{1}(\QQ_{v}, \calO_{n}(1))\cong \calO_{n}.
\end{equation*} 
is annihilated by $\varpi^{n_{v}}$.
\item For every $v\neq l$, $\rmH^{1}_{\mathrm{fin}}(\QQ_{v}, \rmN^{\diamond}_{n}(\triplef)(-1))$ is orthogonal to $\rmH^{1}_{\mathrm{fin}}(\QQ_{v}, \rmM^{\diamond}_{n}(\triplef)(-1))$ under the pairing $(\hphantom{a}, \hphantom{b})_{v}$. Similarly,  $\rmH^{1}_{f}(\QQ_{l}, \rmN^{\diamond}_{n}(\triplef)(-1))$  is orthogonal to $\rmH^{1}_{f}(\QQ_{l}, \rmM^{\diamond}_{n}(\triplef)(-1))$.
\item Let $p$ be an $(n, 1)$-admissible prime for $f$, then we have a perfect pairing 
\begin{equation*}
\rmH^{1}_{\mathrm{fin}}(\QQ_{p}, \rmN^{\diamond}_{n}(\triplef)(-1))\times \rmH^{1}_{\mathrm{sing}}(\QQ_{p}, \rmM^{\diamond}_{n}(\triplef)(-1))\rightarrow \calO_{n}
\end{equation*}
of free $\calO_{n}$-modules of rank $1$. 
\end{enumerate}
\end{lemma}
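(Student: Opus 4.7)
My plan is to treat the four assertions in order, since each is an incarnation of a standard global/local duality statement, with the last one being where the symmetric cube structure and level raising really enter. For (1), I would invoke global Poitou--Tate duality: the sum of the local invariants of the cup product of a global class in $\rmH^{1}(\QQ,\rmN^{\diamond}_{n}(\triplef)(-1))$ with a global class in $\rmH^{1}(\QQ,\rmM^{\diamond}_{n}(\triplef)(-1))$ lands in $\rmH^{2}(\QQ,\calO_{n}(1))$ whose invariant sum is zero by class field theory. Nothing specific to the symmetric cube is needed here beyond the existence of the $G_{\QQ}$-equivariant pairing into $\calO_{n}(1)$.

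For (2), I would use the fact that the triple product representation $\rmV(\triplef)(-1)$, and hence its direct summand $\mathrm{Sym}^{3}\rmV_{f}(-1)$, is \emph{tamely pure} in the sense of Liu (this is the analogue of the lemma preceding Definition \ref{BK-grp}). This forces $\rmH^{1}(\QQ_{v},\mathrm{Sym}^{3}\rmV_{f}(-1))=0$ for every $v\ne l$, and the long exact sequence obtained from $0\to\rmM^{\diamond}(\triplef)(-1)\xrightarrow{\varpi^{n}}\rmM^{\diamond}(\triplef)(-1)\to\rmM^{\diamond}_{n}(\triplef)(-1)\to 0$ then shows that $\rmH^{1}(\QQ_{v},\rmM^{\diamond}_{n}(\triplef)(-1))$ is annihilated by a single power $\varpi^{n_{v}}$ depending only on the torsion of $\rmH^{1}(\QQ_{v},\rmM^{\diamond}(\triplef)(-1))$ and the discrete analogue for $\rmN^{\diamond}$. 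Hence the image of $(\ ,\ )_{v}$ is $\varpi^{n_{v}}$-torsion uniformly in $n$.

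Assertion (3) is standard local Tate duality: for $v\ne l$, the finite (unramified) subspaces are exact annihilators of one another, and for $v=l$ the Bloch--Kato local conditions are self-annihilating under the crystalline duality pairing (\emph{cf.} \cite{BK}). Both facts transfer from $\rmV$ to $\rmT/\varpi^{n}$ because the local conditions on $\rmM^{\diamond}_{n}$ and $\rmN^{\diamond}_{n}$ were defined by pullback/image from the characteristic zero objects.

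The main work is (4). Under the identification $\rmM^{[p]}_{n}(\triplef)\cong\rmM_{n}(\triplef)$ from Lemma \ref{p-lower}, Corollary \ref{main-coro} gives
\begin{equation*}
\rmH^{1}_{\mathrm{sing}}(\QQ_{p},\rmM_{n}(\triplef)(-1))\cong\bigoplus_{j=1}^{3}\bigotimes_{i=1}^{3}S^{B}_{2}(N^{+},\calO)/I_{i,n},
\end{equation*}
free of rank three, and analogously a rank three unramified part by the duality between $\rmH^{1}_{\mathrm{fin}}$ and $\rmH^{1}_{\mathrm{sing}}$ at an $(n,1)$-admissible prime. The point is that $S_{3}$ acts on $\rmM_{n}(\triplef)$ via the three-fold tensor structure, the three summands above are permuted transitively, and the symmetric cube component $\rmM^{\diamond}_{n}(\triplef)$ corresponds to the $S_{3}$-invariants. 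I would check that this $S_{3}$-action commutes with the local Tate pairing at $p$ (i.e.\ is compatible with the identification of $\rmH^{1}_{\mathrm{fin}}(\QQ_{p},\rmN^{\diamond}_{n}(\triplef)(-1))$ with the dual of $\rmH^{1}_{\mathrm{sing}}(\QQ_{p},\rmM^{\diamond}_{n}(\triplef)(-1))$ coming from the ambient rank three pairing), so that passing to $S_{3}$-invariants on one side and $S_{3}$-coinvariants on the other yields a perfect pairing between two rank one $\calO_{n}$-modules. The hardest step is really this last compatibility check: verifying that the three-dimensional singular quotient, which was computed geometrically from featuring cycles and component groups in Theorem \ref{arithmetic-level-raising}, carries the $S_{3}$-action arising from permuting the three factors of $X^{3}$, and that the isomorphism with $\bigoplus_{j=1}^{3}\bigotimes_{i=1}^{3}S^{B}_{2}(N^{+},\calO)/I_{i,n}$ intertwines this action with the obvious permutation of direct summands. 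Once this is in place, the rank one freeness of $\rmH^{1}_{\mathrm{sing}}(\QQ_{p},\rmM^{\diamond}_{n}(\triplef)(-1))$ and its dual follows, and the perfectness of the pairing is inherited from the perfectness in the full triple product setting via local Tate duality and the fact that $l\nmid|S_{3}|=6$ so that taking invariants commutes with duality.
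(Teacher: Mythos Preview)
Your treatment of parts (1)--(3) matches the paper's: global class field theory for (1), tame purity of $\mathrm{Sym}^{3}\rmV_{f}(-1)$ and the resulting uniformly bounded torsion for (2), and standard local Tate/Bloch--Kato duality for (3).

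For part (4), however, you take a much longer route than the paper does. The paper observes directly that, since $p$ is $(n,1)$-admissible, $\rmT_{f,n}\cong\calO_{n}\oplus\calO_{n}(1)$ as an unramified $G_{\QQ_{p}}$-module, whence $\rmM_{n}(\triplef)\cong\calO_{n}\oplus\calO_{n}(1)^{\oplus 3}\oplus\calO_{n}(2)^{\oplus 3}\oplus\calO_{n}(3)$ and the symmetric cube summand is $\rmM^{\diamond}_{n}(\triplef)\cong\calO_{n}\oplus\calO_{n}(1)\oplus\calO_{n}(2)\oplus\calO_{n}(3)$. Because $l\nmid p^{2}-1$, a one-line computation of $\rmH^{1}_{\mathrm{fin}}$ and $\rmH^{1}_{\sing}$ on each character $\calO_{n}(j-1)$ shows that both the finite part and the singular quotient of $\rmM^{\diamond}_{n}(\triplef)(-1)$ at $p$ are free of rank $1$, and perfectness is then immediate from local Tate duality. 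No appeal to Corollary \ref{main-coro}, no $S_{3}$-equivariance check, and no geometry of featuring cycles is required: part (4) is a purely local statement about an unramified module that splits explicitly into characters.

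Your approach via the full rank-$3$ level-raising isomorphism followed by passage to $S_{3}$-invariants is not wrong in spirit, but it imports the entire proof of Theorem \ref{arithmetic-level-raising} into what is an elementary local computation. Moreover, the ``hardest step'' you flag---checking that the isomorphism of Corollary \ref{main-coro} is $S_{3}$-equivariant for the permutation of the three summands---is genuine additional work not carried out anywhere in the paper, and somewhat delicate, since that isomorphism factors through the non-canonical identifications $\calX^{\vee}_{\calO}/I^{[p]}_{f,n}\cong\Phi_{\calO}/I^{[p]}_{f,n}$ of Lemma \ref{comp-char}. The paper's direct computation sidesteps all of this.
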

\begin{proof}
The statement $(1)$ follows from global class field theory.  Part $(2)$ follows from the fact that $\rmH^{1}(\QQ_{v}, \rmV^{\diamond}(\triplef))=0$ for all $v\nmid l$ and thus $\rmH^{1}(\QQ_{v}, \rmM^{\diamond}(\triplef))$ is torsion for all $v\nmid l$, see  \cite[Lemma 4.3]{Liu-HZ}. Part $(3)$ is well-known, see \cite[Theorem 2.17(e)]{DDT} for the first statement and \cite[Lemma 4.8]{Liu-HZ} for the second statement. 

For $(4)$, it follows from the definition of an $(n,1)$-admissible prime for $f$ that $\rmM_{n}(\triplef)$ is unramified at $p$ and
$\rmM_{n}(\triplef)\cong\calO_{n}\oplus \calO^{\oplus 3}_{n}(1) \oplus \calO^{\oplus 3}_{n}(2) \oplus\calO_{n}(3)$
as a Galois representation of $G_{\QQ_{p}}$. Then it follows from a simple computation that $ \rmM^{\diamond}_{n}(\triplef)\cong\calO_{n}\oplus \calO_{n}(1) \oplus \calO_{n}(2) \oplus\calO_{n}(3)$. From this, it follows immediately that both $\rmH^{1}_{\mathrm{sing}}(\QQ_{p}, \rmM^{\diamond}_{n}(\triplef)(-1))$ and $\rmH^{1}_{\mathrm{fin}}(\QQ_{p}, \rmM^{\diamond}_{n}(\triplef)(-1))$ are of rank $1$ over $\calO_{n}$. The last claim is also clear form this. 
\end{proof}

Let $p$ be an $(n, 1)$-admissible prime for $f$. Recall that the class $\theta_{*}[\mathfrak{X}\otimes\QQ]\in \mathrm{CH}^{2}(\mathfrak{X}^{3}\otimes \QQ)$ and the Abel-Jacobi map 
\begin{equation*}
\mathrm{AJ}_{\triplef, n}: \Ch^{2}(\mathfrak{X}^{3}\otimes \QQ^{\ac})\rightarrow \rmH^{1}(\QQ, \rmM^{[p]}_{n}(\triplef)(-1))
\end{equation*}
defined in \S 4.5. By Lemma \ref{p-lower}, we have an isomorphism  $\rmM^{[p]}_{n}(\triplef)(-1)\cong \rmM_{n}(\triplef)$ of $G_{\QQ}$-modules.  Therefore we can project $\rmH^{1}(\QQ, \rmM^{[p]}_{n}(\triplef)(-1))$ to the symmetric component $\rmH^{1}(\QQ, \rmM^{\diamond}_{n}(\triplef)(-1))$. Therefore we arrive at the following  Abel-Jacobi map for $\rmM^{\diamond}_{n}(\triplef)(-1)$:
\begin{equation*}
\mathrm{AJ}^{\diamond}_{\triplef, n}: \Ch^{2}(\mathfrak{X}^{3}\otimes \QQ^{\ac})\rightarrow \rmH^{1}(\QQ, \rmM^{\diamond}_{n}(\triplef)(-1))
\end{equation*}
by composing $\mathrm{AJ}_{\triplef, n}$ with the projection map. We will denote by $\Theta^{\diamond [p]}_{n}$ the image of  $\theta_{*}[\mathfrak{X}\otimes\QQ]\in \mathrm{CH}^{2}(\mathfrak{X}^{3}\otimes \QQ)$ under $\mathrm{AJ}^{\diamond}_{\triplef, n}$. We denote by $\partial_{p}\Theta^{\diamond[p]}_{n}$ the image of $\Theta^{\diamond [p]}_{n}$ in the singular quotient $\rmH^{1}_{\sing}(\QQ_{p}, \rmM^{\diamond}_{n}(\triplef)(-1))$. The following proposition summarizes the arithmetic level raising and the first explicit reciprocity law for the symmetric cube representation $\rmM^{\diamond}_{n}(\triplef)(-1)$.
\begin{proposition}\label{reci-symm}
Let $p$ be an  $(n, 1)$-admissible prime for $f$. We assume the assumptions in Theorem \ref{main-symm} are satisfied. Then we have the following.
\begin{enumerate}
\item There is an isomorphism
\begin{equation*}
\rmH^{1}_{\sing}(\QQ_{p}, \rmM^{\diamond}_{n}(\triplef)(-1))\cong \otimes^{3}_{i=1}S^{B}_{2}(N^{+}, \calO){/I_{f,n}}.
\end{equation*}
\item Let $\phi\otimes \phi\otimes \phi\in \otimes^{3}_{i=1}S^{B}_{2}(N^{+}, \calO)[I_{f,n}]$. Then we have the following reciprocity formula: \begin{equation*}
(\partial_{p}\Theta^{\diamond[p]}_{n}, \phi\otimes \phi\otimes \phi)=(p+1)^{3}\sum_{z\in X^{B}}\phi(z)\phi(z)\phi(z).
\end{equation*} 
\end{enumerate}
\end{proposition}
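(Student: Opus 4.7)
The plan is to deduce the proposition from Corollary \ref{main-coro} and Theorem \ref{recip} by specializing to the degenerate triple $\triplef = (f, f, f)$ and taking $S_{3}$-invariants, where $S_{3}$ acts on $\mathfrak{X}^{3}$ (and on its semistable model $\calY$) and correspondingly on $\rmT_{f,n}^{\otimes 3}$ by permutation of factors. Under this action, $\rmM^{\diamond}_{n}(\triplef)(-1) = \Sym^{3}\rmT_{f,n}(-1)$ is the trivial $S_{3}$-isotypic component of $\rmM^{[p]}_{n}(\triplef)(-1) = \rmT_{f,n}^{\otimes 3}(-1)$; the complementary summand is isomorphic to $\rmT_{f,n}^{\oplus 2}$, on which $S_{3}$ acts through its $2$-dimensional standard representation and which has no invariants.

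First I would observe that an $(n, 1)$-admissible prime $p$ for $f$ is automatically $n$-admissible for $\triplef = (f, f, f)$, since $\epsilon_{p}(f) = 1$ forces $\epsilon_{p,1}\epsilon_{p,2}\epsilon_{p,3} = 1$, and the other hypotheses of Theorem \ref{arithmetic-level-raising} are inherited from those of Theorem \ref{main-symm}. Corollary \ref{main-coro} then yields
\[
\rmH^{1}_{\sing}(\QQ_{p},\rmM^{[p]}_{n}(\triplef)(-1))\ \cong\ \oplus_{j=1}^{3}\bigl(\otimes_{i=1}^{3}S^{B}_{2}(N^{+},\calO)/I_{f,n}\bigr).
\]
Since $l \geq 5$ is coprime to $|S_{3}|=6$, the $S_{3}$-isotypic decomposition of $\rmM^{[p]}_{n}(\triplef)(-1)$ is exact and commutes with Galois cohomology and with the singular quotient functor, so taking $S_{3}$-invariants on the left-hand side yields $\rmH^{1}_{\sing}(\QQ_{p},\rmM^{\diamond}_{n}(\triplef)(-1))$. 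Inspection of the proof of Theorem \ref{arithmetic-level-raising} shows that the three summands on the right-hand side are indexed by which of the three tensor positions carries the component group $\Phi$ (the other two positions contributing $\calX^{\vee}$), and that $S_{3}$ permutes these summand labels in the natural way. Moreover, since each $S^{B}_{2}(N^{+},\calO)/I_{f,n}$ is free of rank one over $\calO_{n}$ by Theorem \ref{level-raise-curve}, the internal permutation action on $\otimes^{3} S^{B}_{2}(N^{+},\calO)/I_{f,n}$ is trivial, so the $S_{3}$-invariants reduce to the diagonal copy of $\otimes^{3} S^{B}_{2}(N^{+},\calO)/I_{f,n}$, itself free of rank one over $\calO_{n}$. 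This establishes (1).

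For (2), the diagonal morphism $\theta : \mathfrak{X}\to\mathfrak{X}^{3}$ is visibly $S_{3}$-equivariant with trivial $S_{3}$-action on $\mathfrak{X}$, so the cycle $\theta_{*}[\mathfrak{X}\otimes\QQ]$ is $S_{3}$-invariant. Functoriality of the Abel-Jacobi map then places $\Theta^{[p]}_{n}$ in the $S_{3}$-invariant subspace $\rmH^{1}(\QQ,\rmM^{\diamond}_{n}(\triplef)(-1)) \subset \rmH^{1}(\QQ,\rmM^{[p]}_{n}(\triplef)(-1))$, under which it coincides with $\Theta^{\diamond[p]}_{n}$. The singular residue $\partial_{p}\Theta^{[p]}_{n}\in\oplus_{j=1}^{3}(\otimes^{3}S^{B}_{2}(N^{+},\calO)/I_{f,n})$ is therefore $S_{3}$-invariant, and under the identification from (1) the class $\partial_{p}\Theta^{\diamond[p]}_{n}$ is represented by $\partial^{(1)}_{p}\Theta^{[p]}_{n}$ (equivalently by any $\partial^{(j)}_{p}\Theta^{[p]}_{n}$, since the three are permuted by $S_{3}$). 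Invoking Theorem \ref{recip} with $j=1$ and $\phi_{1}=\phi_{2}=\phi_{3}=\phi$ then gives
\[
(\partial_{p}\Theta^{\diamond[p]}_{n},\phi\otimes\phi\otimes\phi) = (\partial^{(1)}_{p}\Theta^{[p]}_{n},\phi\otimes\phi\otimes\phi) = (p+1)^{3}\sum_{z\in X^{B}}\phi(z)\phi(z)\phi(z),
\]
which is precisely the claim of (2).

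The main technical point requiring verification is the explicit $S_{3}$-equivariance of the threefold direct sum decomposition in Corollary \ref{main-coro}, that is, tracking the natural $S_{3}$-action through the analysis of the featuring cycles $Y^{i(i+1)(i+2)(i+3)(i+5)}$ and $Y^{012345}$ in the proof of Theorem \ref{arithmetic-level-raising}. Once this compatibility is made explicit, the remaining steps are formal manipulations with $S_{3}$-isotypic decompositions followed by a direct appeal to Theorem \ref{recip}.
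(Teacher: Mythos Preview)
Your proposal is correct and follows the same strategy as the paper: both deduce the proposition from Corollary \ref{main-coro} and Theorem \ref{recip} applied to the degenerate triple $\triplef=(f,f,f)$, then pass to the symmetric cube summand. The paper's proof is a one-line reference (``projecting from $\rmM_{n}(\triplef)(-1)$ to $\rmM^{\diamond}_{n}(\triplef)(-1)$'') together with a pointer to the computation in Lemma \ref{sel-pairing}(4), where the local decomposition $\rmM^{\diamond}_{n}(\triplef)\cong \calO_{n}\oplus\calO_{n}(1)\oplus\calO_{n}(2)\oplus\calO_{n}(3)$ at $p$ gives the rank-$1$ statement directly. You instead obtain the rank-$1$ statement by tracking the $S_{3}$-action through the featuring-cycle decomposition in Theorem \ref{arithmetic-level-raising} and observing that $S_{3}$ permutes the three summands of the rank-$3$ singular quotient; this is more explicit about \emph{which} rank-$1$ line inside $\oplus^{3}_{j=1}(\otimes^{3}S^{B}_{2}(N^{+},\calO)/I_{f,n})$ one lands on, and makes the identification $\partial_{p}\Theta^{\diamond[p]}_{n}=\partial^{(j)}_{p}\Theta^{[p]}_{n}$ transparent. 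The paper's route is shorter, while yours makes the compatibility with the pairing in part (2) slightly cleaner; neither requires any idea not already present in the paper.
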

\begin{proof}
These follow from our main results Corollary \ref{main-coro} and Theorem \ref{recip} by projecting from $\rmM_{n}(\triplef)(-1)$ to $\rmM^{\diamond}_{n}(\triplef)(-1)$. See also the calculation in  the proof of  Lemma \ref{sel-pairing} $(4)$.
\end{proof}

\begin{myproof}{Theorem}{\ref{main-symm}} We prove the theorem by contradiction. Suppose that $\rmH^{1}_{f}(\QQ, \mathrm{Sym}^{3}\rmV_{f}(-1))$ has dimension $>0$. Then one can find a free $\calO_{n}$-module $S$ of rank $1$ that is contained in $\rmH^{1}_{f}(\QQ, \rmN^{\diamond}_{n}(\triplef)(-1))$ by \cite[Lemma 5.9]{Liu-HZ}. Let $s$ be a generator of $S$. By the same argument as in \cite[Lemma 4.14, Lemma 4.16]{Liu-cubic}, we can choose an $(n, 1)$-admissible prime $p$ for $f$ with the property that $\loc_{p}(s)\neq 0\in \rmH^{1}_{\mathrm{fin}}(\QQ_{p}, \rmN^{\diamond}_{1}(\triplef)(-1))$. Moreover by \cite[Lemma 3.4, Remark 4.7]{Liu-HZ}, we have
\begin{itemize}
\item $\loc_{v}(s)\in \rmH^{1}_{\mathrm{fin}}(\QQ_{v}, \rmN^{\diamond}_{n}(\triplef)(-1))$ for $v\not\in\{l, N\}$;
\item $\loc_{l}(s)\in \rmH^{1}_{f}(\QQ_{l}, \rmN^{\diamond}_{n}(\triplef)(-1))$.
\end{itemize}

The class  $\Theta^{\diamond[p]}_{n} \in\rmH^{1}(\QQ, \rmM^{\diamond}_{n}(\triplef)(-1))$ satisfies the following properties:
\begin{itemize}
\item $\loc_{v}(\Theta^{\diamond[p]}_{n})\in \rmH^{1}_{\mathrm{fin}}(\QQ_{v}, \rmM^{\diamond}_{n}(\triplef)(-1))$ for all $v\not\in \{p, l, N\}$,
\item $\loc_{l}(\Theta^{\diamond[p]}_{n})\in \rmH^{1}_{f}(\QQ_{l}, \rmM^{\diamond}_{n}(\triplef)(-1))$.
\end{itemize}
These properties follow from the fact that the integral model $\mathfrak{X}^{3}$ has good reduction at a place $v\not\in \{p, l, N\}$ and at $l$ by \cite{Nekovar}. By Proposition \ref{period} and the assumptions in the theorem, there exists an integer $n_{I}< n$ and an element $\phi\otimes\phi\otimes\phi\in \otimes^{3}_{i=1}S^{B}_{2}(N^{+}, \calO)[I_{f,n}]$ such that 
\begin{equation*}
\varpi^{n_{I}}\nmid \sum_{z\in X^{B}}\phi(z)\phi(z)\phi(z).
\end{equation*}
It follows that $\varpi^{n_{I}}\nmid(\partial_{p}\Theta^{\diamond[p]}_{n}, \phi\otimes \phi\otimes \phi)$ by Proposition \ref{reci-symm} and therefore $\varpi^{n_{I}}\nmid \partial_{p}\Theta^{\diamond[p]}_{n}$.
One can also find an integer $n_{N}< n$  depending on the level $N$ and the class $s$ such that $(\varpi^{n_{N}}s, \kappa)_{v}=0$ for any place $v\mid N$ and any $\kappa\in  \rmH^{1}(\QQ_{v}, \rmM^{\diamond}_{n}(\triplef)(-1))$. This follows from Lemma \ref{sel-pairing} $(2)$. We choose $n$ such that $n> n_{I}+n_{N}$. By 
Lemma \ref{sel-pairing} $(1)$, we have 
\begin{equation*}
\varpi^{n}\mid \sum_{v}(\varpi^{n_{N}}s, \Theta^{\diamond[p]}_{n})_{v}=0.
\end{equation*}
By the properties of the classes of $s$ and $\Theta^{\diamond[p]}_{n}$ recalled above, the above equation implies that 
\begin{equation*}
\varpi^{n}\mid (\varpi^{n_{N}}s, \Theta^{\diamond[p]}_{n})_{p}=0.
\end{equation*}
It follows that $\varpi^{n_{I}}\mid (s, \Theta^{\diamond[p]}_{n})_{p}$ as $n_{I}< n-n_{N}$. This is a contradiction since
\begin{equation*}
\varpi^{n_{I}}\mid (s, \Theta^{\diamond[p]}_{n})_{p}= (\loc_{p}(s), \partial_{p}\Theta^{\diamond[p]}_{n})_{p}
\end{equation*}
which implies that $\varpi^{n_{I}}\mid \partial_{p}\Theta^{\diamond[p]}_{n}$.  
\end{myproof}

\end{document}